    \newcommand{\href}[2]{#2}
\newtheorem{theorem}{Theorem}[section]
\newtheorem{lemma}[theorem]{Lemma}
\newtheorem{example}[theorem]{Example}
\numberwithin{equation}{section}  
  \newcounter{mnote}
  \let\oldmarginpar\marginpar
    \renewcommand\marginpar[1]{\-\oldmarginpar[\raggedleft\footnotesize #1]%
    {\raggedright\footnotesize #1}}
\newenvironment{enumerateX}
{\begin{list}{\arabic{enumi})}{
\usecounter{enumi}
\leftmargin 2.5em\topsep 0.5em\itemsep -0.0em\labelwidth 50.0em}}
{\end{list}}
\newenvironment{assumptionX}
{\begin{trivlist}\em\item[\hskip\labelsep{\bf Assumption:}]}{\end{trivlist}}
\definecolor{myblue}{rgb}{0.2,0.2,0.7}
\definecolor{mygreen}{rgb}{0,0.6,0}
\definecolor{mycyan}{rgb}{0,0.6,0.6}
\definecolor{myred}{rgb}{0.9,0.2,0.2}
\definecolor{mymagenta}{rgb}{0.9,0.2,0.9}
\definecolor{mywhite}{rgb}{1.0,1.0,1.0}
\definecolor{myblack}{rgb}{0.0,0.0,0.0}
\newcommand{\beq}{\begin{equation}}
\newcommand{\eeq}{\end{equation}}
\newcommand{\beqa}{\begin{eqnarray}}
\newcommand{\eeqa}{\end{eqnarray}}
\newcommand{\tbar}{|\hspace*{-0.15em}|\hspace*{-0.15em}|}
\newcommand{\leqs}{\leqslant}      
\newcommand{\geqs}{\geqslant}      
\newcommand{\R}{{\mathbb R}}       
\newcommand{\cM}{{\mathcal M}}
\newcommand{\cP}{{\mathcal P}}
\newcommand{\cS}{{\mathcal S}}
\newcommand{\cT}{{\mathcal T}}
\newcommand{\cX}{{\mathcal X}}
\newcommand{\bD}{{\bf D}}
\begin{document}

\title[AFEM Techniques for the PBE]
      {Adaptive Finite Element Modeling Techniques \\
       for the Poisson-Boltzmann Equation}

\author[M. Holst]{Michael Holst}
\email{mholst@math.ucsd.edu}

\author[J.A. McCammon]{James Andrew McCammon}
\email{jmccammon@ucsd.edu}

\author[Z. Yu]{Zeyun Yu}
\email{yu@math.ucsd.edu}

\author[Y.C. Zhou]{Youngcheng Zhou}
\email{zhou@math.ucsd.edu}

\author[Y. Zhu]{Yunrong Zhu}
\email{zhu@math.ucsd.edu}

\address{Department of Mathematics\\
         University of California San Diego\\ 
         La Jolla CA 92093}
\address{Department of Physics\\
         University of California San Diego\\ 
         La Jolla CA 92093}
\address{Department of Chemistry \& Biochemistry\\
         University of California San Diego\\ 
         La Jolla CA 92093}
\address{Center for Theoretical Biological Physics (CTBP)\\
         University of California San Diego\\ 
         La Jolla CA 92093}
\address{National Biomedical Computational Resource (NBCR)\\
         University of California San Diego\\ 
         La Jolla CA 92093}
\address{Howard Hughes Medical Institute (HHMI)\\
         University of California San Diego\\ 
         La Jolla CA 92093}

\thanks{MH was supported in part by 
        NSF Awards~0715146, 0821816, 0915220, and 0822283 (CTBP),
        NIH Award P41RR08605-16 (NBCR), DOD/DTRA Award HDTRA-09-1-0036,
        CTBP, NBCR, NSF, and NIH}
\thanks{JAM was supported in part by 
        NIH, NSF, HHMI, CTBP, and NBCR}
\thanks{ZY, YZ, and YZ were supported in part by 
        NSF Award~0715146, CTBP, NBCR, and HHMI}

\date{\today}

\keywords{Poisson-Boltzmann equation, semi-linear partial differential equations, supercritical nonlinearity, singularity, {\em a priori} $L^{\infty}$ estimates, existence, uniqueness, well-posedness, Galerkin methods, discrete {\em a priori} $L^{\infty}$ estimates, quasi-optimal {\em a priori} error estimates, adaptive finite methods, contraction, convergence, optimality, surface and volume mesh generation, mesh improvement and decimation}

\begin{abstract}
We consider the design of an effective and reliable adaptive finite element
method (AFEM) for the nonlinear Poisson-Boltzmann equation (PBE).
We first examine the two-term regularization technique for the continuous 
problem recently proposed by Chen, Holst, and Xu based on the removal of the 
singular electrostatic potential inside biomolecules; this technique made 
possible the development of the first complete solution and approximation 
theory for the Poisson-Boltzmann equation, the first provably convergent 
discretization, and also allowed for the development of a provably
convergent AFEM.
However, in practical implementation, this two-term regularization exhibits
numerical instability.
Therefore, we examine a variation of this regularization technique which 
can be shown to be less susceptible to such instability.
We establish {\em a priori} estimates and other basic results for the 
continuous regularized problem, as well as for Galerkin finite element
approximations.
We show that the new approach produces regularized continuous and discrete
problems with the same mathematical advantages of the original regularization.
We then design an AFEM scheme for the new regularized problem, 
and show that the resulting AFEM scheme is accurate and reliable, by proving
a contraction result for the error.
This result, which is one of the first results of this type for nonlinear 
elliptic problems, is based on using continuous and discrete
{\em a priori} $L^{\infty}$ estimates to establish quasi-orthogonality.
To provide a high-quality geometric model as input to the AFEM algorithm,
we also describe a class of feature-preserving adaptive mesh generation 
algorithms designed specifically for constructing meshes of biomolecular 
structures, based on the intrinsic local structure tensor of the molecular 
surface.
All of the algorithms described in the article are implemented
in the Finite Element Toolkit (FETK), developed and maintained 
at UCSD.
The stability advantages of the new regularization scheme are demonstrated 
with FETK through comparisons with the original regularization approach
for a model problem.
The convergence and accuracy of the overall AFEM algorithm is also 
illustrated by numerical approximation of electrostatic solvation energy 
for an insulin protein.
\end{abstract}

\maketitle

\clearpage

{\footnotesize
\tableofcontents
}
\vspace*{-0.5cm}

\section{Introduction}
\label{sec:intro}


The Poisson-Boltzmann Equation (PBE) 
has been widely used for modeling the electrostatic interactions 
of charged bodies in dielectric media, such as molecules, ions, and colloids, 
and thus is of importance in many areas of sciences and engineering,
including biochemistry, biophysics, and medicine.
The PBE provides a high fidelity mean-field description of the 
electrostatic interactions and ionic distribution of a solvated biomolecular 
system at the equilibrium state, and entails singularities of different orders 
at the position of the singular permanent charges and dielectric interface. 
The popularity of the PBE model is clearly evidenced by the success of
software packages such as APBS, CHARMM, DelPhi, and UHBD.
We summarize the mathematical PBE model in some detail in 
Section~\ref{sec:pbe}, referring to the classical texts~\cite{Mcqu73,Tanf61} 
for more physical discussions.

While tremendous advances have been made in fast numerical
solution of the PBE over the last twenty 
years (cf.~\cite{Lu.B;Zhou.Y;Holst.M;McCammon.J2008,HBW99,Hols94e}
for surveys of some of this work), 
mathematical results for the PBE (basic understanding of the solution 
theory of the PBE, as well as a basic understanding of approximation 
theory for PBE numerical methods) were fundamentally unsatisfying, 
due to the following questions about the PBE and its numerical solution 
which remained open until 2007:
\begin{enumerateX}
\item Is the PBE well-posed for the dimensionless potential $\tilde{u}$?
\item What function space does the solution $\tilde{u}$ lie in?
\item Can one derive {\em a priori} (energy and/or pointwise) estimates 
      for the solution $\tilde{u}$?
\item Is there an efficient (low-complexity) and reliable (provably convergent
      under uniform mesh refinement)
      numerical method that produces an approximation $u_h$ to the $\tilde{u}$?
\item Is there a provably convergent adaptive method for the PBE?
\end{enumerateX}
That these basic questions were open through 2007 is somewhat remarkable, 
given the popularity of this model.
However, four key features of the PBE model, namely:
(1) the undetermined electrostatic potential at the boundary of a given system;
(2) the singular fixed charge distribution in biomolecules;
(3) the discontinuous dielectric and Debye constants on the irregular 
    dielectric interface (with a possible second interface representing an 
    ion exclusion layer); and 
(4) strong nonlinearity in the case of a strong potential or heavily charged 
    molecules, 
place the PBE into a class of semilinear partial differential equations 
that are fundamentally difficult to analyze, and difficult to 
solve numerically.
In fact, numerical evidence suggested that the most popular algorithms
used for the PBE were actually non-convergent under mesh refinement,
which would put the reliability of scientific results based on 
numerical solution of the PBE in doubt.

To address this issue, in 2007 Chen, Holst, 
and Xu~\cite{CHX06b} used a two-scale decomposition 
as a mathematical technique to answer each of the open questions above
about the PBE, building the first available solution theory and approximation
theory for the PBE.
(A basic existence and uniqueness result using variational arguments
had appeared already in~\cite{Hols94d}.)
A splitting-type treatment of the singular charges was not new,
and is a very natural physical idea first sketched out 
in~\cite{GDLM93} and then also explored numerically in~\cite{ZPVKL96}.
This method decomposes the PBE into 
a Poisson equation with singular charge and uniform dielectric that
determines a singular function $u^s$, and a regularized Poisson-Boltzmann 
equation (RPBE) that determines a smooth correction $u$, with the sum of
the two giving the dimensionless potential: $\tilde{u} = u^s + u$.
This natural splitting technique was exploited 
in~\cite{CHX06b} to show that:
the regularized PBE is well-posed, as is also the full PBE; 
the solution $\tilde{u}$ can be split into a singular function $u^s$ 
(having a simple closed form expression) 
and a smooth remainder $u$ which lies in a well-understood function
space $H^{1+\alpha}$ with $\alpha \geqs 0$; 
the remainder function $u$ is pointwise bounded almost everywhere;
a standard finite element discretization that incorporates the singular
function converges and does so at optimal rate in the limit of uniform
mesh refinement; and finally, an implementable adaptive algorithm
exists that can be proven mathematically to converge to the exact
solution of the PBE.

While this two-scale decomposition made a number of basic mathematical
results possible in~\cite{CHX06b},
the resulting numerical algorithms (both based on uniform mesh refinement
and adaptive mesh refinement) are subject to a hidden instability.
This instability is in fact tied to the two-scale decomposition technique 
itself.
(Example \ref{eg:born_ion_1} in Section \ref{subsect:regforms2} gives
a more complete description of this difficulty.)
While this feature of the splitting has no impact on the mathematical or 
convergence results in~\cite{CHX06b}, the practical impact 
is that algorithms based on this particular decomposition are not accurate 
enough to be competitive with other approaches.
A slightly modified decomposition scheme was proposed by 
Chern et. al~\cite{CLW03} (see also~\cite{GYW07}) and was applied 
together with Cartesian grid-based interface methods to solve the 
PBE for simple structure geometries. 
The new splitting technique gives rise to a modified form of the
regularized PBE with similar structure 
to the splitting scheme in ~\cite{CHX06b}, 
but appears to be more stable.

This article is focused on using a similar decomposition variant to
remove the instability present in the formulation appearing 
in~\cite{CHX06b}, as well as to improve the 
theoretical results and algorithm components of the adaptive 
finite element algorithm described in~\cite{CHX06b}.
In particular, we adopt a variation of the regularization splitting scheme
similar to~\cite{GYW07,CLW03,CHX06b}, involving a 3-term 
expansion rather than a 2-term expansion.
We establish several basic mathematical results for the 3-term
splitting, analogous to those established 
in~\cite{CHX06b} for the original 2-term splitting.
This includes {\em a priori} $L^{\infty}$ estimates,
existence and uniqueness, and discrete estimates for solutions,
and basic error estimates a general class of Galerkin methods.
We then focus specifically on finite element methods, and
design an adaptive finite element method (AFEM)
for solving the resulting regularized PBE.
Due to recent progress in the convergence analysis of 
AFEM for linear and nonlinear 
equations~\cite{Cascon.J;Kreuzer.C;Nochetto.R;Siebert.K2008,HTZ08a}, 
we also substantially improve the AFEM convergence result 
in~\cite{CHX06b} to one which guarantees contraction rather
than just convergence.
We present numerical examples showing the accuracy, efficiency, and stability
of this new scheme.
To provide a high-quality geometric model as input to the AFEM algorithm,
we will also describe a class of feature-preserving adaptive mesh generation 
algorithms designed specifically for constructing meshes of biomolecular 
structures, based on the intrinsic local structure tensor of the molecular 
surface.

While we focus on (adaptive) finite element methods in this article, the 
splitting framework we describe can be incorporated into
finite difference, finite volume, spectral, wavelet, finite element, 
or boundary element methods for the PBE.
While the finite element method has the advantage of exactly representing
the molecular surface (when appropriate mesh generation algorithms
are used; see Section~\ref{sec:mesh}), advances in finite difference 
and finite volume methods include interface discretization 
methods which substantially improve solution accuracy at the dielectric
discontinuity surface~\cite{CLW03,ZZMW06a,ZMW07,GYW07}; 
see also~\cite{Xie.D;Zhou.S2007} for a similar approach using mortar elements.
Boundary element methods for the (primarily linearized) PBE are also
competitive, due to algorithm advances for molecular surface generation 
(see~\cite{HoYu07d,YuHCM08} and Section~\ref{sec:mesh}),
due to emergence of fast multipole codes for surface 
integrals~\cite{Lu.B;Zhang.D;McCammon.J2005,Ben05b,Ben06_pnas}, and due to 
new techniques for nonlinearity~\cite{Boschitsch.A;Fenley.M2004}. 


The remainder of the paper is organized as follows. 
In Section~\ref{sec:pbe}, we give a brief derivation of
the standard form of the PBE,
and then examine the two-scale regularization in~\cite{CHX06b}.
We then describe a second distinct regularization and 
illustrate why it is superior to the original approach 
as a framework for developing numerical methods.
We then quickly assemble the cast of basic mathematical results needed
for the second regularization, which do not immediately follow from the
results established in~\cite{CHX06b} for the original regularization.
In Section~\ref{sec:afem}, we describe an adaptive finite
element method based on residual-type {\em a posteriori} estimates, and
summarize some basic results we need later for the development of a
corresponding convergence theory.
In Section~\ref{sec:conv}, we develop the first AFEM contraction-type result for a 
class of semilinear problems that includes the PBE, substantially improving 
the AFEM convergence result given in~\cite{CHX06b}.
We also include a discussion of our mesh generation toolchain
in Section~\ref{sec:mesh}, which plays a key role in the success of the
overall adaptive numerical method.
Numerical experiments are conducted in Section~\ref{sec:numerical},
where stability of the regularization scheme and convergence 
of the adaptive algorithm are both explicitly demonstrated numerically,
in agreement with the theoretical results established in the paper.
We summarize our results in Section~\ref{sec:summary}.

\section{The Poisson-Boltzmann Equation (PBE)}
\label{sec:pbe}

The PBE can be derived in various ways based on the
statistical description of a system of charged particles in 
electrolytes \cite{Mcqu73,Tanf61}.
A well-known derivation starts with the Poisson equation for the 
electrostatic potential $\phi = \phi(x)$ induced by a charge distribution 
$\rho = \rho(x)$:
\begin{eqnarray*}
-\nabla \cdot (\epsilon \nabla \phi) = \frac{4 \pi}{\epsilon_0}\rho,
\end{eqnarray*}
where $\epsilon=\epsilon(x)$ is a spatially varying dielectric constant 
and $\epsilon_0$ is the dielectric permittivity constant of a vacuum.
The charge distribution $\rho$ may consist of fixed charges $\rho_f$ 
and mobile charges $\rho_m.$ 
The fixed charge distribution $\rho_f$ represents the partially charged 
atoms of the molecules immersed in the aqueous solution; the mobile 
charges $\rho_m$ models the charged ions in the solution. 
With this perspective, the fixed charge distribution $\rho_f$ is 
independent of the potential $\phi.$ The charge distribution $\rho_m$ of mobile ions, however, depends on the 
potential $\phi$ following the Gouy-Chapman or Debye-H\"uckel theories,
and can be modeled by a Boltzmann distribution.
The two charge distributions then take the form
\begin{equation}
\rho_m = \sum_{j=1}^{M} c_j q_j e^{-q_j \phi/kT},
\qquad
\rho_f = \sum_{i=1}^{N} q_i \delta(x_i) 
\quad x_i \in \Omega_m.
\label{eqn:mobile_charge_1}
\end{equation} 
Here for $\rho_{m},$ $M$ is number of ion species, $c_j$ and $q_j$ are the bulk concentration and charge of the $j^{th}$ ion, $k$ is the Boltzmann constant 
and $T$ is the absolute temperature;  and for $\rho_{f},$ there are $N$ charges located at $x_i$ in the molecule region $\Omega_m$ and 
carrying charge $q_i$, where $\delta(x_i)$ is the delta function 
centered at $x_i.$
This gives rise to the 
{\em full} or {\em nonlinear} PBE:
\begin{eqnarray}
-\nabla \cdot (\epsilon \nabla \phi) = \frac{4 \pi}{\epsilon_0} \left ( 
\sum_{i=1}^{N} q_i \delta(x_i) + \sum_{j=1}^{M} c_j q_j e^{-q_j \phi/kT} \right ).
\label{eqn:pbe_full_1}
\end{eqnarray}

A number of variations of the PBE can be derived under
appropriate assumptions. 
For example, for a symmetric 1:1 ionic solution (two ions species with same
but opposite charge) with $M=2,$ bulk concentration $c_j =c$ and charge $q_j = (-1)^{j}q$ for $j=1,2$, 
equation~\eqref{eqn:pbe_full_1} reduces to:
\begin{eqnarray*}
-\nabla \cdot (\epsilon \nabla \phi) + \frac{4 \pi}{\epsilon_0} 2c q ~\sinh \left (  \frac{q \phi}{kT} \right) = \frac{4 \pi}{\epsilon_0} \sum_{i=1}^{N} q_i \delta(x_i).
\label{eqn:pbe_nonlinear_1}
\end{eqnarray*}
We now introduce a dimensionless electrostatic potential 
$\tilde{u} = q \phi/(kT)$, 
and the so-called Debye length 
$l_D = \sqrt{\frac{\epsilon_0 kT}{8\pi c q^2}}$, 
and define the modified Debye-Huckel parameter to be $\kappa=1/l_D$.
After scaling the singular charges we can write the 
final form of the Poisson-Boltzmann equation as:
\begin{equation}
-\nabla \cdot (\epsilon \nabla \tilde{u}) + \kappa^2 \sinh \tilde{u} = f, \label{eqn:pbe_final}
\end{equation}
where $\displaystyle{ f = \sum_{i=1}^{N} z_i \delta(x_i) }$,
with $z_i = 4 \pi q q_i / (\epsilon_0 kT)$.

As analyzed in~\cite{CHX06b}, since the singular function 
$f$ does not belong to $H^{-1}(\Omega)$, equation~\eqref{eqn:pbe_final} does 
not have a solution in $H^1$, or at least the equation does not have
a weak formulation involving the $H^1$ as the test space.
Consequently, standard numerical methods for elliptic equations are not 
guaranteed to produce numerical solutions which converge to the exact
solution to the PBE in the limit of mesh refinement, and numerical evidence 
suggests that in fact standard methods fail to converge.
We now discuss two regularization schemes for the 
PBE which have not only been the basis for the
new solution and approximation theory results for the PBE appearing 
in~\cite{CHX06b}, but also provide a robust framework
for constructing provably convergent numerical algorithms.

\subsection{A Natural Regularized Formulation} \label{subsect:regforms1}

The first scheme is motivated by the physical interpretation of the solution 
to PBE and decomposes the solution into two components, based on the 
distinct solvent region $\Omega_s$ and molecular region $\Omega_m$
in the model.
This spatial decomposition of the domain $\Omega$, as well as the 
interface $\Gamma$ between $\Omega_s$ and $\Omega_m$, 
is depicted in Figure~\ref{fig:mol}.
\begin{figure}[!ht]
\begin{center}
      \includegraphics[width=0.3\textwidth]{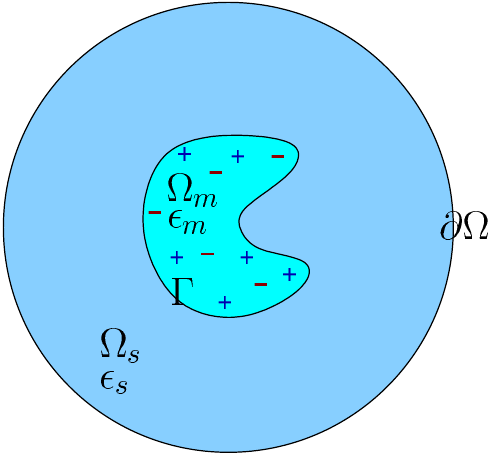}
\end{center}
\caption{Illustration of the solvent region $\Omega_s$, the molecular
         region $\Omega_m$, the interface $\Gamma$, and the two
         distinct dielectric constants $\epsilon_s$ and $\epsilon_m$
         in the two regions.}
\label{fig:mol}
\end{figure}
The component of the solution, which will have singularities but will
be representable in closed-form, is called the self-energy corresponding
to the electrostatic potential.
The second component, which will be much more well-behaved but will not
have a closed-form representation, corresponds to the screening of the 
potential due to high dielectric and mobile ions in the solution region. 
This natural decomposition provides a regularization scheme was proposed 
and explored numerically in \cite{GDLM93,ZPVKL96}. 
In this scheme, the singular component $u^s$ of the electrostatic 
potential is identified as the solution of the following Poisson equation,
the solution of which can be readily assembled from the 
Green's functions (cf.~\cite{StHo2010a}):
\begin{equation}
-\nabla \cdot (\epsilon_m \nabla u^s) = \sum_{i=1}^{N} z_i \delta(x_i),
\qquad
u^s : = \sum_{i=1}^{N} \frac{z_i}{\epsilon_m} \frac{1}{|x-x_i|}.
\label{eqn:us_1}
\end{equation}
Subtracting~\eqref{eqn:us_1} from~\eqref{eqn:pbe_final} 
gives the equation for the regular component $u$:
\begin{eqnarray}
\label{eqn:ur_1}
-\nabla \cdot (\epsilon \nabla u) + \kappa^2 \sinh (u + u^s) = \nabla \cdot ((\epsilon - \epsilon_m) \nabla u^s). 
\end{eqnarray}
Since $\kappa$ vanishes in $\Omega_m$ and $\epsilon - \epsilon_m$ is nonzero 
only in region $\Omega_s$, the right hand side term
$\nabla \cdot ((\epsilon - \epsilon_m) \nabla u^s)$ belongs to 
$H^{-1}$, and a standard $H^1$-weak formulation of \eqref{eqn:ur_1} is 
well-defined.

A variational argument can be used to show existence and uniqueness 
of a weak solution to~\eqref{eqn:us_1} in 
$H^1$ (see~\cite{CHX06b} for this argument, and
also~\cite{Hols94d} for a similar argument in the case
of an alternative regularization).
{\it A priori} $L^{\infty}$ estimates for the solution are
established in~\cite{CHX06b},
which are critical to the development of {\it a priori} error estimates 
for Galerkin (e.g. finite element, wavelet, and spectral) approximations 
of the regular component, and are also critical to the convergence results
for both uniform and adaptive finite element methods developed
in~\cite{CHX06b}.
This two-scale decomposition framework is at the heart of the solution theory, 
approximation theory, and convergence results for adaptive finite element 
methods for the PBE developed in~\cite{CHX06b}.

\subsection{An Alternative Regularized Formulation} \label{subsect:regforms2}

Since the singular component represents the Coulomb potential in the low 
dielectric environment, it is always much larger than the real potential 
in $\Omega_s,$ where the dielectric constant is high and strong ion screening 
exists.
As a result, the regular component is also much larger in magnitude than the 
full potential in $\Omega_s$, and the decomposition in 
Section \ref{subsect:regforms1} can produce an unstable numerical scheme.
More precisely, relatively small error in the numerical solution of 
regular component could lead to large relative error in the full 
potential, as illustrated in the following example.
\begin{example} \label{eg:born_ion_1}
Let $\Omega_m$ be a unit ball with a unit positive charge at the origin. 
The dielectric constants are $\epsilon_m=2$ and $\epsilon_s=80$ inside and 
outside the ball, respectively. 
Let the modified ionic strength $\kappa = 0.$ 
This so-called Born Ion problem admits an analytical solution for the full 
potential 
$\tilde{u}(r) = \frac{1}{\epsilon_m r} + (\frac{1}{\epsilon_s} - \frac{1}{\epsilon_m})$ in $\Omega_m$ and $\tilde{u}(r) = \frac{1}{\epsilon_s r}$ in $\Omega_s$ where $r = \sqrt{x^2 + y^2 + z^2}$. 
Since the singular component 
$u^s(r) = \frac{1}{\epsilon_m r}$, it follows that the regular component 
$$u(r)=\tilde{u}(r) - u^s(r) =
\frac{1}{\epsilon_s r}(1 - \frac{\epsilon_s}{\epsilon_m}) =-39 \frac{1}{\epsilon_s r} = -39 \tilde{u}(r).$$ 
We assume that the singular component is computed analytically. 
Suppose that the numerical solution of $u(r)$ carries a relative error 
 $e=3\%|u(r)|,$ and assume that this is the only source of numerical error.
This implies the relative error of the final full potential is impacted as
$$ \frac{|e|}{|u(r)|} = 3\%,
\qquad \Longrightarrow \qquad
\frac{|e|}{|\tilde{u}(r)|} = \frac{0.03 |u(r)|}{|\tilde{u}(r)|} 
   = \frac{0.03 \times 39 |\tilde{u}(r)|}{|\tilde{u}(r)|} = 117 \%.
$$
This suggests relative error of 3\% in the numerical 
solution of $u(r)$ will be amplified by 39 times in the relative error 
of the full potential $\tilde{u}(r)$ when $u(r)$ is added to the 
analytical solution $u^s(r).$  
\end{example}

Example \ref{eg:born_ion_1} indicates that unless the regular component is
solved to high accuracy, the full potential could be of low quality, 
and numerical algorithms based on the decomposition may fail.
A second decomposition scheme we now examine 
demonstrates more satisfactory numerical stability. 
Proposed in~\cite{CLW03}, this decomposition splits the potential into 
three parts in the molecular region only.
The first component is the singular 
component $u^s$ defined by~\eqref{eqn:us_1}.
The second component $u^h$ is the harmonic extension of the trace of
the singular component $u^s$ on the molecular surface into the interior 
of the molecule; it is completely determined by the singular 
component $u^s$ and the geometry of 
the molecular surface through the harmonic equation
\begin{equation} 
-\Delta u^h = 0 ~\mbox{in}~\Omega_m,
\qquad
u^h = -u^s ~\mbox{on}~\Gamma,
\label{eqn:uh_1}
\end{equation} 
where $\Gamma$ is the interface between $\Omega_m$ and $\Omega_s.$
Then we set $u^s + u^h =0$ in $\Omega_s.$ 
By definition of $u_h$, this extension is continuous across the interface.
So the complete decomposition reads
\begin{eqnarray}
 \tilde{u} =  u + \frac{\epsilon - \epsilon_s}{\epsilon_m - \epsilon_s} (u^s + u^h) 
   =\begin{cases}
         u^s + u^h + u  & ~\mbox{in}~ \Omega_m\\
 u & ~\mbox{in}~ \Omega_s \\
       \end{cases}. 
\end{eqnarray}
In this decomposition, the regular component $u$ is defined as an interface problem 
\begin{equation} \label{eqn:ur_2}
	\left\{\begin{array}{rlll} 
		-\nabla \cdot (\epsilon \nabla u) + \kappa^2 \sinh u & = &0, &\mbox{in}~\Omega \\
 		\left [ u \right ]_{\Gamma} & = &0, &\mbox{on}~\Gamma, \\
		 \left [\epsilon \frac{\partial u}{\partial n_{\Gamma}} \right ]_{\Gamma} & = & g_{\Gamma}, & \mbox{on}~\Gamma,
\quad
\mbox{with}~ g_{\Gamma} 
:=\epsilon_m \frac{\partial (u^s + u^h)}{\partial n_{\Gamma}}|_{\Gamma},
\\
		 u&=& 0, & |x|\to \infty,
	\end{array}\right.
\end{equation} 
where $n_{\Gamma}$ is the unit out normal of the interface $\Gamma,$ and $[\cdot]$ denotes the jump of enclosed quantity on the given interface as $[v]_{\Gamma} = \lim_{t\to 0} v(x+tn_{\Gamma}) - v(x-tn_{\Gamma}).$
The second interface condition \eqref{eqn:ur_2} arises from
continuity of flux in \eqref{eqn:pbe_final}.
The singular component $u^s$ is given by \eqref{eqn:us_1},
whereas computing $u^h$ is trivial 
using finite element or boundary integral methods.
Therefore, we assume $u^s$ and $u^h$ are known in the following discussion,
and are smooth on $\Gamma.$  
Since the singular component $u^s$ is only applied in the 
interior of the molecule region, a discontinuity appears in
 the remaining component of the potential on the molecular surface. 
The harmonic component $u^h$ is introduced to compensate for this 
discontinuity using harmonic extension,
so that the regular component as defined by equation~\eqref{eqn:ur_2} 
is continuous on the molecular surface.  

Since no decomposition of the potential occurs in $\Omega_s$,
error in numerical solutions of $u$ are not amplified in the full potential.
While mathematically equivalent to the decomposition
in~\cite{CHX06b}, this alternative three term-based
splitting regularization is potentially numerically more favorable than
the original decomposition.
The implementations of this scheme using finite difference interface 
methods \cite{CLW03,GYW07} have proven that it can significantly improve 
the accuracy of the full potential. 
Mirroring the general plan taken
in~\cite{CHX06b}, we will use this alternative
decomposition as the basis for an analysis of the regularized problem,
for the development of an approximation theory, and for the development
of a practical, provably convergent adaptive method.

A final difficultly in solving the regularized form of the
PBE in \eqref{eqn:ur_2} (and other forms of the PBE) is that the 
computational domain is all of space. 
It is standard to truncate space to a bounded 
Lipschitz domain $\Omega$ by posing some artificial (but highly accurate)
boundary condition on $\partial \Omega.$ 
For simplicity, one chooses $\Omega$ to be a ball or cube containing the 
molecule region. 
The solvent region is then defined as $\Omega_s \cap \Omega,$ which will also 
be denoted by $\Omega_s$ without the danger of confusion.
There are various approaches to the choosing boundary condition 
on $\partial\Omega$; using the condition $\tilde{u} = g$ is
standard, where $g$ can be obtained from a known analytical solution to 
some simplification of the 
linearized PBE, and can be chosen to be a smooth function on the boundary.
Far from the molecule region, such analytical solutions provide a highly 
accurate boundary condition approximation for the PBE on the truncated domain.
For other possible constructions of $g,$ 
see \cite{CHX06b,Boschitsch.A;Fenley.M2007,Hols94d}
and the references cited therein. 
Finally, we end up with the regularized PBE (or RPBE)
in a bounded domain $\Omega$,
which becomes the focus for the remainder of the paper:
\begin{equation}
	\label{eqn:reg-pbe}
	\left\{\begin{array}{rlll}
	-\nabla\cdot (\epsilon \nabla u) + \kappa^2 \sinh (u) &=& 0, &\mbox{in } \Omega\\
	
	[u]_{\Gamma} =0 \mbox{ and }{\left[\epsilon \frac{\partial u}{\partial n_{\Gamma}}\right]}_{\Gamma} &=& g_{\Gamma}, &\mbox{on } \Gamma\\

	u|_{\partial \Omega} & = & g, &\mbox{on } \partial\Omega.\\
	\end{array}\right.
\end{equation}
Our main goals for the remainder of the paper are to:
\begin{enumerateX}
\item Establish {\em a priori} $L^{\infty}$ estimates for~\eqref{eqn:reg-pbe},
      leading to a standard argument for well-posedness of the continuous
      and discrete problems.
      Most other mathematical results in the article hinge critically
      on these {\em a priori} estimates.
\item Develop a general approximation theory for \eqref{eqn:reg-pbe}
      by establishing {\em a priori} error estimates for Galerkin methods,
      giving convergence of finite element and other methods;
\item Develop a practical adaptive finite element method for
      \eqref{eqn:reg-pbe} and prove that it is convergent;
\item Develop practical mesh generation algorithms for the domains arising 
      in~\eqref{eqn:reg-pbe} that meet the needs of our finite element methods.
\end{enumerateX}
We note that there are two distinct interface conditions 
in~\eqref{eqn:reg-pbe}, which appears to give it an unusual formulation.
However, the first interface condition $[u]=0$ will be automatically 
satisfied by standard constructions of $C^0$ finite element spaces.
The second interface condition will be embedded into the weak 
form of equation \eqref{eqn:reg-pbe} in a natural way, so that in fact
both interface conditions are quite easily and naturally incorporated 
into finite element (as well as wavelet and spectral) discretizations.
Although fairly complicated schemes arise when considering the regularization
approach with finite difference and finite volume methods,
the interface conditions can be enforced with these discretization
as well
(cf.~\cite{Wang.W2004,Oevermann.M;Klein.R2006,Li.Z;Ito.K2001,Li.Z2003}).

\subsection{A priori $L^{\infty}$-Estimates and Well-posedness}
\label{subsect:priori}

{\it A priori} $L^{\infty}$ estimates for the solution $u$ to the 
regularized PBE~\eqref{eqn:reg-pbe} are the critical component of
the key mathematical results we need to have in place for the development
of a reliable adaptive method, namely:
(1) well-posedness of the continuous and discrete regularized problems;
(2) {\em a priori} error estimates for Galerkin approximations;
(3) {\em a posteriori} error estimates for Galerkin approximations;
and
(4) auxiliary results for establishing
    convergence (contraction) of AFEM.
The regularized equation~\eqref{eqn:reg-pbe} governing $u$
derived in Section \ref{subsect:regforms2} differs significantly from 
the decomposition used in \cite{CHX06b}, and
as a consequence we now derive the {\em a priori} $L^{\infty}$ estimates.

In what follows, we use standard notation for the $L^p(G)$ spaces, 
$1\leqs p\leqs \infty$, with the norm $\|\cdot\|_{p,G}$ on any subset 
$G\subset \R^{d};$ we use standard notation for Sobolev norms
$\|u\|_{k,p,G} = \|u\|_{W^{k,p}(G)}$ 
where the natural setting here will be $p=2$ and $k=0$ or $k=1.$
 For any functions $v\in L^{p}(G)$ and $w\in L^{q}(G)$ for $p, q\geqs 1$ with $\frac{1}{p} + \frac{1}{q} =1,$ we denote the pairing $(v, w)_{G}$ as 
$(v, w)_{G}:=\int_{G} vw dx.$
If $G=\Omega$ then we also omit it from the norms (or pairing)
to simplify the presentation.

To begin, define an affine subset of $H^1(\Omega)$ as
${H^1_g(\Omega):=\{~v\in H^1(\Omega) ~:~ v=g \mbox{ on } \partial \Omega~\}}$,
and then define 
${\cX_g :=\{~v\in H^1_g(\Omega) ~:~ e^v, e^{-v} \in L^{\infty}(\Omega)~\}}$,
with $\cX_0$ denoting the case when ${g=0}$.
A weak formulation of equation \eqref{eqn:reg-pbe} reads:
Find $u\in \cX_g$ such that
\begin{equation}
	\label{eqn:pbe_weak}
	a(u, v) + ( b(u), v ) = \langle g_{\Gamma},v\rangle_{\Gamma}, 
    \quad \forall v\in H_0^1(\Omega),
\end{equation}
where
$
a(u,v) = (\epsilon \nabla u , \nabla v),$ $(b(u),v ) =   (\kappa^2 \sinh(u),v)$ and $\langle g_{\Gamma},v\rangle_{\Gamma} = \int_{\Gamma} g_{\Gamma} v ds.
$ 
It is easy to verify that the bilinear form in~\eqref{eqn:pbe_weak} 
satisfies:
\begin{equation}
m \|u\|_{1,2}^2 \leqs a(u,u),
\qquad
a(u,v) \leqs M \|u\|_{1,2} \|v\|_{1,2}, 
\qquad \forall u,v \in H_0^{1}(\Omega),
\label{eqn:coercive-bounded}
\end{equation}
where $0 < m \leqs M < \infty$ are constants depending only on the
maximal and minimal values of the dielectric and on the domain. 
The properties~\eqref{eqn:coercive-bounded}
imply the norm on $H_0^{1}(\Omega)$ is equivalent
to the energy norm 
$\tbar\cdot\tbar \colon H_0^{1}(\Omega) \rightarrow \mathbb{R}$,
\begin{equation}
	\tbar u \tbar^2 = a(u,u),
	\qquad
	m \|u\|_{1,2}^2 \leqs \tbar u \tbar^2 \leqs M \|u\|_{1,2}^2.
   \label{eqn:equiv}
\end{equation}
To establish {\em a priori} $L^{\infty}$ estimates,
we further split the solution $u$ to~\eqref{eqn:reg-pbe} into solutions of two sub-problems.
The first sub-problem is a linear elliptic interface problem;
estimates on solutions to this problem are then
utilized in the analyzing the second sub-problem, 
which is a nonlinear elliptic problem without interface conditions.
The second sub-problem is then analyzed using a cut-off function argument 
that exploits a weak formulation of the maximum principle.
More precisely, let $u=u^l + u^n,$ where $u^l\in H_g^1(\Omega)$ satisfies 
the linear elliptic equation
\begin{equation}
	\label{eqn:linear}
	a(u^l, v) = \langle g_{\Gamma}, v\rangle_{\Gamma},
    \quad \forall v\in H_0^1(\Omega),
\end{equation}
and $u^n\in \cX_0$ satisfies the nonlinear elliptic equation:
\begin{equation}
	\label{eqn:nonlinear}
	a(u^n, v) +(b(u^l + u^n) ,v)= 0,
    \quad \forall v\in H_0^1(\Omega),
\end{equation}
where we note that the sum $u = u^l+u^n$ is then the desired solution
to the RPBE~\eqref{eqn:pbe_weak}.
It is easy to see that the linear part $u^l$ is the solution to the 
interface problem:
\begin{equation*}
	\left\{\begin{array}{lll}
		-\nabla \cdot(\epsilon \nabla u^l) &=&  0 \mbox{ in } \Omega\\
		u^l|_{\partial \Omega} = g , \mbox{ and } {\left[\epsilon\frac{\partial u^l}{\partial n_{\Gamma}}\right]}_{\Gamma}&=& g_{\Gamma}; 
	\end{array}\right.
\end{equation*}
while the nonlinear part $u^n$ is the solution to the (homogeneous)
semilinear equation
\begin{equation*}
	\left\{\begin{array}{rlll}
		-\nabla \cdot (\epsilon \nabla u^n) + \kappa^2 \sinh( u^n + u^l) & =& 0 &\mbox{ in } \Omega, \\
u^n & =& 0 &\mbox{ on }  \partial \Omega.
	\end{array}\right.
\end{equation*}
Existence and uniqueness of $u^l$ solving~\eqref{eqn:linear} follows by
standard arguments; furthermore,
if the interface $\Gamma$ to be sufficiently smooth
(e.g.~$\Gamma$ is $C^2$), then $u^{l}\in L^{\infty}(\Omega)$ follows 
immediately from known regularity results for linear interface 
problems (cf.~\cite{CHX06b,Babuska.I1970,Bramble.J;King.J1996,Chen.Z;Zou.J1998,Savare.G1998}).
This makes possible {\em a priori} $L^{\infty}$ estimates
for the nonlinear component, and subsequently the entire regularized solution.
To this end, define
\begin{eqnarray}
\alpha' & = & \arg \max_c \Big( \kappa^2\sinh( 
   c + \sup_{x\in \Omega_s} u_l ) \leqs 0\Big ),
\qquad \alpha^n = \min (\alpha ', 0), 
\label{eqn:alpha} \\
\beta' & = & \arg \min_c \Big( \kappa^2\sinh( 
   c + \inf_{x\in \Omega_s} u_l ) \geqs 0\Big ),
\qquad \beta^n = \max (\beta ', 0).
\label{eqn:beta}
\end{eqnarray}
\begin{lemma}[A Priori $L^{\infty}$ Estimates]
\label{lm:nonlinear}
Suppose that the solution $u^{l}$ to \eqref{eqn:linear} satisfies $u^{l}\in L^{\infty}(\Omega),$ and let $u^n$ be any weak solution of \eqref{eqn:nonlinear}. 
If 
$\alpha^n, \beta^n \in \mathbb{R}$ are as 
defined in~\eqref{eqn:alpha}--\eqref{eqn:beta}, then
\begin{equation}
 \label{eq:un}
 \alpha^n \leqs u^n \leqs \beta^n, ~a.e.~ \mbox{ in } \Omega.
\end{equation}
\end{lemma}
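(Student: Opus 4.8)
The plan is to establish the upper and lower pointwise bounds on $u^n$ by a cut-off (truncation) argument applied to the weak formulation \eqref{eqn:nonlinear}, exploiting the sign structure of $\kappa^2\sinh$ together with the fact that $\kappa^2$ vanishes in $\Omega_m$ and that $u^l\in L^\infty(\Omega)$. I will prove only the upper bound $u^n\leqs\beta^n$ a.e.; the lower bound $\alpha^n\leqs u^n$ follows by the identical argument applied to $-u^n$ (using the oddness of $\sinh$), so I will state that and not repeat it.

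First I would set $w := (u^n - \beta^n)^+ = \max\{u^n-\beta^n,0\}\in H_0^1(\Omega)$; this is a legitimate test function since $\beta^n\geqs 0$ and $u^n\in\cX_0$, so $w$ vanishes on $\partial\Omega$ and $e^w,e^{-w}\in L^\infty(\Omega)$ as needed. Testing \eqref{eqn:nonlinear} with $v=w$ gives
\begin{equation*}
a(u^n,w) + (\kappa^2\sinh(u^n+u^l),w) = 0.
\end{equation*}
For the bilinear term, on the set $\{u^n>\beta^n\}$ one has $\nabla w=\nabla u^n$ and elsewhere $\nabla w=0$, so $a(u^n,w)=a(w,w)\geqs m\|w\|_{1,2}^2$ by coercivity \eqref{eqn:coercive-bounded}. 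For the nonlinear term, I need the integrand $\kappa^2\sinh(u^n+u^l)\,w$ to be nonnegative a.e.: on $\Omega_m$ this is automatic since $\kappa\equiv 0$ there; on $\Omega_s$, wherever $w>0$ we have $u^n>\beta^n$, hence $u^n+u^l > \beta^n + \inf_{x\in\Omega_s}u^l \geqs \beta'+\inf_{x\in\Omega_s}u^l$, and by the very definition of $\beta'$ in \eqref{eqn:beta} this makes $\kappa^2\sinh(u^n+u^l)\geqs 0$. Consequently $(\kappa^2\sinh(u^n+u^l),w)\geqs 0$, and the weak equation forces $m\|w\|_{1,2}^2\leqs 0$, whence $w\equiv 0$, i.e. $u^n\leqs\beta^n$ a.e. in $\Omega$.

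The main obstacle, and the point requiring the most care, is the interplay between the two regions at the level of the truncated test function: one must check that the argument $\arg$-max/min defining $\alpha',\beta'$ in \eqref{eqn:alpha}--\eqref{eqn:beta} is compatible with the essential supremum/infimum of $u^l$ over $\Omega_s$ (using $u^l\in L^\infty$ to make $\sup_{\Omega_s}u^l$, $\inf_{\Omega_s}u^l$ finite and meaningful in the a.e. sense), and that the sign of the nonlinearity is controlled precisely on the set where the test function is active. A secondary technical point is justifying that $(u^n-\beta^n)^+$ is an admissible element of $H_0^1(\Omega)$ and that chain-rule/Stampacchia-type manipulations of $\nabla w$ are valid for $H^1$ functions — but this is standard (see, e.g., the cut-off arguments in \cite{CHX06b}). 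Once the upper bound is in hand, replacing $u^n$ by $-u^n$ and invoking $\sinh(-t)=-\sinh(t)$ together with the symmetric definitions of $\alpha',\beta'$ yields the lower bound, completing the proof of \eqref{eq:un}. $\blacksquare$
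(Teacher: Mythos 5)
Your proposal is correct and follows essentially the same cut-off argument as the paper: test with $(u^n-\beta^n)^+$, use the definition of $\beta'$ and monotonicity of $\sinh$ to get the sign of the nonlinear term on the support of the test function, then conclude from coercivity that the truncation vanishes. The only cosmetic differences are that you derive the lower bound by the $u^n\mapsto -u^n$ symmetry rather than running the parallel argument with $(u^n-\alpha^n)^-$, and you make the $\kappa\equiv 0$ on $\Omega_m$ step explicit, which the paper leaves implicit.
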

\begin{proof}
The short proof is similar to that
in~\cite{CHX06b,StHo2010a}, which we include for completeness,
due to its critical role in the results throughout the article.
We first define
$$
{\overline{\phi} = (u^n-\beta^n)^+=\max (u^n-\beta^n, 0)},
\quad
{\underline{\phi} = (u^n-\alpha^n)^-=\max (\alpha^n-u^n, 0)}.
$$
Since ${\beta^n \geqs 0}$ and ${\alpha^n \leqs 0}$, 
it follows (cf.~\cite{StHo2010a})
that ${\overline{\phi}, \underline{\phi} \in H_0^1(\Omega)}$,
and can be used as
{\em pointwise non-negative} (almost everywhere) test functions.
For either $\phi=\overline{\phi}$ or 
$\phi=-\underline{\phi}$, we have
$$
(\varepsilon \nabla u^n,\nabla \phi)+(\kappa
^2\sinh(u^n+u^l),\phi) = 0.
$$
Note $\overline{\phi} \geqs 0$ in $\Omega$ and its support set is
$\overline{\mathcal{Y}} = \{x \in \bar \Omega\, | \, u^n(x)\geqs\beta^n\}$. 
On $\overline{\mathcal{Y}}$, we have
$$ \kappa ^2\sinh(u^n+u^l) \geqs \kappa ^2\sinh(\beta ' +
\inf_{x\in \Omega_s} u^l ) \geqs 0.
$$
Similarly, $-\underline{\phi} \leqs 0$ in $\Omega$ with support
$\underline{\mathcal{Y}} = \{x \in \bar \Omega\, | \, u^n(x)\leqs\alpha^n\}$. 
On $\underline{\mathcal{Y}}$, we have
$$ \kappa ^2\sinh(u^n+u^l) \leqs \kappa^2\sinh(\alpha ' +
\sup_{x\in \Omega_s} u^l ) \leqs 0.
$$
Together this implies both
\begin{align*}
& 0 \geqs (\varepsilon \nabla u^n,\nabla \overline{\phi})
    =(\varepsilon \nabla (u^n-\beta^n),\nabla \overline{\phi})
    \geqs (\inf_{x\in \Omega}\varepsilon(x) )\|\nabla \overline{\phi}\|_2^2 \geqs 0,
\\
& 0 \geqs (\varepsilon \nabla u^n,\nabla (-\underline{\phi}))_2
    =(\varepsilon \nabla (\alpha^n-u^n),\nabla \underline{\phi})_2
    \geqs (\inf_{x\in \Omega}\varepsilon(x) ) \|\nabla \underline{\phi}\|_2^2 \geqs 0.
\end{align*}
Using the Poincar\'{e} inequality we have finally
$ 0 \leqs \|\phi\|_{1,2} \lesssim \|\nabla \phi\|_2 \leqs 0,$
giving $\phi =0$,
for either $\phi=\overline{\phi}$ or $\phi=-\underline{\phi}$.
Thus $\alpha^n \leqs u^n\leqs \beta^n$ in $\Omega$.
\end{proof}

We have therefore shown 
that any solution $u \in H^1(\Omega)$ to the regularized 
problem~\eqref{eqn:pbe_weak}
must lie in the set
$$
[\alpha,\beta]_{1,2} := \{~ u \in H^1(\Omega) 
     ~:~ \alpha \leqs u \leqs \beta ~\} 
      \subset H^1(\Omega),
$$
where $\alpha,\beta \in \mathbb{R}$ are 
$$
\alpha = \alpha^n + \inf_{x \in \Omega_s} u_l,
\qquad
\beta = \beta^n + \sup_{x \in \Omega_s} u_l.
$$
Since this ensures $u \in L^{\infty}(\Omega)$, which subsequently
ensures $e^u \in L^2(\Omega)$, we can replace
the set $\cX_g$ with the following
function space as the set to search for solutions to the RPBE:
$$
V = \{~ u \in [\alpha,\beta]_{1,2} 
   ~:~ u = g \mbox{ on } \partial \Omega ~\} \subset \cX_g \subset H^1(\Omega).
$$
Our weak formulation of the RPBE now reads:
\begin{equation}\label{eqn:pbe-weak2}
\mbox{ Find } u \in V \mbox{ such~that }
a(u, v)+(b(u),v)=\langle g_{\Gamma},v\rangle_{\Gamma}, 
\quad \forall v\in H_0^1(\Omega).
\end{equation}
Note that in general $V$ is not a subspace of $H^1(\Omega)$ since it is
not a linear space, due to the inhomogeneous boundary condition requirement.
However, as remarked above, standard results for linear interface problems
imply existence, uniqueness, and {\em a priori} $L^{\infty}$ bounds for $u^l$ 
solving~\eqref{eqn:linear}, leaving only the equation~\eqref{eqn:nonlinear}
for the remainder $u^n$.
Therefore,~\eqref{eqn:pbe-weak2} is mathematically equivalent to
\begin{equation}\label{eqn:pbe-weak3}
\mbox{ Find } u^n \in U \mbox{ such~that }
a(u^n, v)+(b(u^n+u^l),v)=0 \quad \forall v\in H_0^1(\Omega),
\end{equation}
where 
$$
U = \{~ u \in H^1_0(\Omega) ~:~ u_- \leqs u \leqs u_+ ~\} 
    \subset H^1_0(\Omega),
$$
with $u_- = \alpha^n$ and $u_+ = \beta^n$ from Lemma~\ref{lm:nonlinear}.
We now have a formulation~\eqref{eqn:pbe-weak3} that involves looking
for a solution in a well-defined subspace $U$ of
the (ordered) Banach space $X=H^1_0(\Omega)$, and are now prepared to
establish existence (and uniqueness) of the solution.
The argument we use below differs significantly from that used 
in~\cite{CHX06b,Hols94d} for the original regularization.

\begin{theorem}[Existence and Uniqueness of Solutions to RPBE]
\label{thm:pbe-existence}
Let the solution $u^{l}$ to \eqref{eqn:linear} satisfy $u^{l}\in L^{\infty}(\Omega).$ Then there exists a unique weak solution $u^n \in U$ 
to~\eqref{eqn:pbe-weak3}, and
subsequently there exists a unique weak solution 
$u \in V$ to the RPBE~\eqref{eqn:pbe-weak2}.
\end{theorem}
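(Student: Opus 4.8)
The plan is to remove the exponential nonlinearity's lack of control on $H_0^1(\Omega)$ by truncation, to solve the resulting standard monotone problem, and then to invoke the a~priori $L^{\infty}$ estimate of Lemma~\ref{lm:nonlinear} to conclude that the truncation was inactive. A truncation is forced on us because for $d\geqs 2$ one does not have $H^1(\Omega)\hookrightarrow L^{\infty}(\Omega)$, so $v\mapsto\kappa^2\sinh(v+u^l)$ is not a well-defined bounded map $H_0^1(\Omega)\to H^{-1}(\Omega)$, and neither the direct method nor the Browder--Minty theorem applies to~\eqref{eqn:pbe-weak3} as it stands; the remedy is to replace $\sinh$ by a bounded, Lipschitz, monotone surrogate that coincides with it on the interval $[\alpha^n,\beta^n]$ in which, by Lemma~\ref{lm:nonlinear}, every weak solution must lie.

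Concretely, I would let $\chi\colon\R\to[\alpha^n,\beta^n]$ be the truncation $\chi(s)=\max(\alpha^n,\min(\beta^n,s))$ and set $\hat b(x,s):=\kappa^2\sinh\!\big(\chi(s)+u^l(x)\big)$. Since $u^l\in L^{\infty}(\Omega)$ by hypothesis, $\hat b$ is a bounded Carath\'{e}odory function, nondecreasing in $s$, and its primitive $\hat B(x,s):=\int_0^s\hat b(x,\tau)\,d\tau$ is convex in $s$ and of at most linear growth. Hence $\hat E(v):=\tfrac12 a(v,v)+\int_{\Omega}\hat B(x,v)\,dx$ is finite on all of $H_0^1(\Omega)$, strictly convex (the quadratic term is strictly convex since $a$ is symmetric and coercive by~\eqref{eqn:coercive-bounded}), coercive, and weakly lower semicontinuous, so the direct method of the calculus of variations produces a unique minimizer $u^n\in H_0^1(\Omega)$. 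Because $\hat b$ is bounded and continuous, $v\mapsto\int_{\Omega}\hat B(x,v)$ is G\^{a}teaux differentiable, so $u^n$ satisfies $a(u^n,v)+(\hat b(\cdot,u^n),v)=0$ for all $v\in H_0^1(\Omega)$; that is, $u^n$ solves the truncated problem. (Equivalently, $u^n$ may be obtained from the Browder--Minty theorem, since $v\mapsto a(v,\cdot)+(\hat b(\cdot,v),\cdot)$ is a bounded, hemicontinuous, strictly monotone, coercive map $H_0^1(\Omega)\to H^{-1}(\Omega)$.)

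Next I would rerun the cut-off argument of Lemma~\ref{lm:nonlinear} on this truncated equation; it applies verbatim, since the truncation was designed to preserve both monotonicity and the one-sided sign conditions the proof relies on. Indeed, on the support $\{u^n\geqs\beta^n\}$ of $\overline{\phi}=(u^n-\beta^n)^+$ one has $\chi(u^n)=\beta^n$, whence $\hat b(\cdot,u^n)=\kappa^2\sinh(\beta^n+u^l)\geqs\kappa^2\sinh(\beta'+\inf_{x\in\Omega_s}u^l)\geqs 0$ (using $\beta^n\geqs\beta'$ and $\kappa\equiv 0$ on $\Omega_m$), and symmetrically on $\{u^n\leqs\alpha^n\}$; testing with $\overline{\phi}$ and $-\underline{\phi}$ and applying the Poincar\'{e} inequality forces $\alpha^n\leqs u^n\leqs\beta^n$ a.e.\ in $\Omega$. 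On that range $\chi(u^n)=u^n$, so $\hat b(\cdot,u^n)=\kappa^2\sinh(u^n+u^l)=b(u^n+u^l)$ and $u^n$ is a weak solution of~\eqref{eqn:pbe-weak3} with $u^n\in U$. For uniqueness, any weak solution of~\eqref{eqn:pbe-weak3} lies in $[\alpha^n,\beta^n]$ a.e.\ by Lemma~\ref{lm:nonlinear} and hence also solves the truncated equation; subtracting two such solutions, testing with their difference $w$, and using monotonicity of $\hat b$ gives $a(w,w)\leqs 0$, so $w=0$ by~\eqref{eqn:coercive-bounded}.

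Finally I would transfer this to~\eqref{eqn:pbe-weak2}. Existence, uniqueness, and the bound $u^l\in L^{\infty}(\Omega)$ for the linear interface problem~\eqref{eqn:linear} are available (the bound being the standing hypothesis); then $u:=u^l+u^n$ equals $g$ on $\partial\Omega$, lies in $[\alpha,\beta]_{1,2}$ (hence in $V$), and solves~\eqref{eqn:pbe-weak2} by adding~\eqref{eqn:linear} and~\eqref{eqn:nonlinear}. Conversely, if $u\in V$ solves~\eqref{eqn:pbe-weak2}, then $u-u^l$ solves~\eqref{eqn:nonlinear}, so $u-u^l\in U$ by Lemma~\ref{lm:nonlinear}, and uniqueness for~\eqref{eqn:pbe-weak3} forces $u-u^l=u^n$; thus $u$ is unique. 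The one genuinely delicate point is the design of $\hat b$: it must be simultaneously bounded and Lipschitz, so that the direct-method/monotone-operator machinery works on all of $H_0^1(\Omega)$, and still compatible with the sign conditions underlying Lemma~\ref{lm:nonlinear}, so that the a~priori bound governs the truncated solution and the truncation is seen to have been inactive; the choice $\hat b(x,s)=\kappa^2\sinh(\chi(s)+u^l(x))$ achieves both.
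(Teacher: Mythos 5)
Your proof is correct, and it follows a genuinely different technical route from the paper's, even though both are ultimately variational. The paper keeps the nonlinearity $\kappa^2\sinh(\cdot+u^l)$ intact and minimizes the energy $J(u)=\int_\Omega\frac{\varepsilon}{2}|\nabla u|^2+\kappa^2\cosh(u+u^l)\,dx$ over the \emph{constrained} order interval $U=[\alpha^n,\beta^n]\cap H_0^1(\Omega)$, where the {\em a priori} bounds of Lemma~\ref{lm:nonlinear} guarantee $J<\infty$; you instead \emph{truncate} the nonlinearity at the barriers, minimize the resulting functional (or apply Browder--Minty) over all of $H_0^1(\Omega)$, and then rerun the cut-off argument to show the truncation was inactive. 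What your route buys is a clean passage from minimizer to Euler--Lagrange equation: for an unconstrained minimizer of a G\^{a}teaux-differentiable functional this is immediate, whereas a minimizer over the convex set $U$ a priori satisfies only the variational inequality $\langle J'(u),v-u\rangle\geqs 0$ for $v\in U$, and the step the paper labels ``straight-forward'' (that the constrained minimizer actually solves~\eqref{eqn:pbe-weak3}) in fact requires showing the constraint is inactive --- precisely the barrier argument you make explicit. The price you pay is having to verify that the sign conditions underlying Lemma~\ref{lm:nonlinear} survive the truncation, which you do correctly (on $\{u^n\geqs\beta^n\}$ one has $\chi(u^n)=\beta^n\geqs\beta'$ and $\kappa\equiv 0$ on $\Omega_m$, so the needed one-sided inequality holds). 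Your uniqueness argument via monotonicity of $\hat b$ and coercivity of $a$, and the transfer to~\eqref{eqn:pbe-weak2} by adding $u^l$, match the paper's reasoning.
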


\begin{proof}
We follow the approach in~\cite{CHX06b,Hols94d}.
We begin by defining
${J\colon U \subset H^1_0(\Omega) \to \overline{\mathbb{R}}}$:
$$
J(u)= \int _{\Omega}\frac{\varepsilon}{2}|\nabla u|^2+\kappa
^2\cosh(u+u^l) ~dx.
$$
It is straight-forward to show that
if $u$ is the solution of the optimization problem
\begin{equation}
\label{eqn:rpbe-opt}
J(u)=\inf_{v\in U}J(v) \leqs J(v), ~\forall v \in U,
\end{equation}
then $u$ is the solution of~\eqref{eqn:pbe-weak3}.
We assemble some quick facts about $H^1_0(\Omega)$, 
${U\subset H^1_0(\Omega)}$, and $J$.
\begin{enumerateX}
\item $H^1_0(\Omega)$ is a reflexive Banach space.
\item $U$ is nonempty, convex, and topologically closed as a subset
      of $H^1_0(\Omega)$.
\item $J$ is convex on $U$:
      $J(\lambda u + (1-\lambda)v) \leqs \lambda J(u) + (1-\lambda) J(v)$,
      $\forall u,v \in U$, $\lambda \in (0,1)$.
\end{enumerateX}
By standard results in the calculus of variations (cf.~\cite{StHo2010a}),
we have existence of a solution 
to~\eqref{eqn:rpbe-opt}, and hence to~\eqref{eqn:pbe-weak3}
and~\eqref{eqn:pbe-weak2}, if we can establish two additional
properties of $J$:
\begin{enumerateX}\setcounter{enumi}{3}
\item $J$ is lower semi-continuous on $U$:
       $J(u) \leqs \liminf_{j\to\infty}J(u_j)$, $\forall u_j\to u \in U$.
\item $J$ is coercive on $U$: 
       $J(u) \geqs C_0 \|u\|_{1,2}^2 - C_1$, $\forall u \in U$.
\end{enumerateX}
That $J$ is lower semi-continuous (and in fact, has the stronger
property of weak lower semi-continuity), holds since
$J$ is both convex and Gateaux-differentiable on $U$
(cf.~\cite{StHo2010a} for this and similar results).
That $J$ is coercive follows from $\cosh x \geqs 0$ and the
Poincar\'{e} inequality 
\begin{align*}
J(u) & = \int_{\Omega}\frac{\varepsilon}{2}|\nabla u|^2 + \kappa^2\cosh(u+u^l) dx
\geqs \inf_{x \in \Omega} \frac{\varepsilon(x)}{2} |u|_{1,2}^2
\\
& \geqs \inf_{x \in \Omega} \frac{\varepsilon(x)}{2}
  \left( \frac{1}{2}|u|_{1,2}^2 + \frac{1}{2\rho^2}\|u\|_2^2 \right)
  \geqs C_0 \|u\|_{1,2}^2,
\end{align*}
with $C_0 = ( \inf_{x \in \Omega} \varepsilon(x) )
        \cdot \min \{ 1/4, 1/(4\rho^2) \}$,
where $\rho>0$ is the Poincar\'{e} constant.
It remains to show $u$ is unique.
Assume there are two solutions $u_1$ and $u_2$.
Subtracting~\eqref{eqn:pbe-weak3} for each gives
$
a(u_1-u_2, v)+(b(u_1+u^l)-b(u_2+u^l),v)=0 \quad \forall v\in H_0^1(\Omega).
$
Now take ${v=u_1-u_2}$; monotonicity of the nonlinearity defining
$b$ ensures that ${(b(u_1+u^l)-b(u_2+u^l),u_1-u_2)\geqs 0}$, giving
$
0 \geqs a(u_1-u_2, u_1-u_2) \geqs 2 C_0 \|u_1-u_2\|_{1,2} \geqs 0,
$
where $C_0$ is as above.
This can only hold if $u_1=u_2$.
\end{proof}

In summary, there exists a unique solution 
$u \in V \subset H^{1}(\Omega)$ to the RPBE problem~\eqref{eqn:pbe-weak2},
with compatible barriers $u_-$ and $u_+ \in L^{\infty}$ satisfying
\begin{equation*}
-\infty < u_- \leqs u \leqs u_+ < \infty, \quad \text{a.e.~in}~ \Omega.
\end{equation*}
Moreover, these pointwise bounds combined with a Taylor expansion give
that for any $u,w \in V$ and any $v\in H_0^1(\Omega),$ 
the nonlinearity satisfies a Lipschitz condition:
\begin{equation}
	\label{eqn:K}
( b(u)-b(w),v ) \leqs K \|u-w\|_{2} \|v\|_{2},
\end{equation}
where 
$K = \sup_{\chi\in [u_-, u_+]} \|\kappa^2 \cosh(\chi)\|_{\infty}<\infty$ 
is a constant depending only on the domain, the ionic strength of the 
solvent (embedded in the constant $\kappa$),
and other physical parameters.

\section{Finite Element Methods (FEM)}
\label{sec:afem}

In this section, we consider (mainly adaptive) finite element methods
for the regularized problem~\eqref{eqn:reg-pbe}. 
For simplicity, we assume $\Omega$ be a bounded polygon domain,
and we triangulate $\Omega$ with a shape regular conforming mesh $\cT_h.$ 
Here $h= h_{\max}$ represents the mesh size which is the maximum diameter 
of elements in $\cT_h.$ 
We further assume that 
\begin{assumptionX}{A1}
the discrete interface $\Gamma_h$ approximates the original 
interface $\Gamma$ to the second order, i.e., $d(\Gamma, \Gamma_h) \leqs ch^2.$ 
\end{assumptionX}
The mesh generator discussed in Section \ref{sec:mesh} provides a practical
tool for generating meshes with this type of approximation quality for the 
interface.
Given such a triangulation $\cT_h,$ we construct the linear finite 
element space 
$$V(\cT_h):=\{v\in H^1(\Omega): v|_{\tau} \in \cP_1(\tau),\;\; \forall \tau \in \cT_h\}.$$
Since we may choose $g$ to be a smooth function on $\partial \Omega,$ 
the Trace Theorem (cf.~\cite{AdFo03,StHo2010a}) ensures there exists 
a fixed function $u_D\in H^1(\Omega)$ such that $u_D =g$ on 
$\partial \Omega$ in the trace sense.
Let $H_D^1(\Omega):=H_0^1(\Omega) + u_D$ be the affine space with the 
specified boundary condition, and $V_D(\cT_h):=V(\cT_h)\cap H_D^1(\Omega)$ 
be the finite element affine space of $H_D^1(\Omega).$ 
In particular, we denote $V_0(\cT_h):=V(\cT_h) \cap H_0^1(\Omega).$ 
For simplicity, we assume the boundary condition $g$ can be represented 
by $u_D$ exactly.  
In practical implementation, we will construct an interpolant of $u_D$ having
sufficient approximation quality such that using the interpolant in place of
$u_D$ will not impact the order of accuracy of the algorithm we build below
for approximating the solution $u$ to the regularized 
problem~\eqref{eqn:reg-pbe}.
A Galerkin finite element approximation of \eqref{eqn:pbe_weak} takes
the form:
Find $u_h \in V_D(\cT_h)$ such that
\begin{equation}
  \label{eqn:disc}
	a(u_h,v) + ( b(u_h), v ) = \langle g_{\Gamma}, v \rangle,
    \quad \forall v \in V_0(\cT_h).
\end{equation}

The primary concerns for this type of approximation technique are the 
following four mathematical questions regarding the 
Galerkin approximation $u_h$:
\begin{enumerateX}
\item Does $u_h$ satisfy discrete {\em a priori} bounds in $L^{\infty}$ and 
      other norms, so that the nonlinearity can be controlled for error 
      analysis?
\item Does $u_h$ satisfy quasi-optimal {\em priori} error estimates,
      so that the finite element method will converge under uniform
      mesh refinement?
\item Can one ensure AFEM (non-uniform mesh refinement) convergence
      $\lim_{k\rightarrow\infty} u_k = u$,
      where $u_k$ is the Galerkin approximation of $u$ at step
      $k$ of AFEM?
\item Can one produce $u_h$ at each step of the (uniform or adaptive)
      refinement algorithm using algorithms which have optimal (linear)
      or nearly optimal space and time complexity?
\end{enumerateX}
The first two questions were answered affirmatively 
in~\cite{CHX06b} for the first regularized formulation.
We give only a brief outline below as to how the arguments for answering 
the first two questions can be modified to establish the analogous results 
for the second regularized formulation here.
The third question was partially answered
in~\cite{CHX06b} for the first regularization,
but we give an improved, more complete answer to this question
below and in Section~\ref{sec:conv}, which is one of the main contributions 
of the paper.
Regarding the fourth question,
due to the discontinuities in the dielectric and the modified Debye-Huckel 
parameter, one must take care to solve the resulting nonlinear algebraic 
systems using robust inexact global Newton methods, combined with modern 
algebraic multilevel-based fast linear solvers in order to produce an 
overall numerical solution algorithm which is reliable and has 
low-complexity.
We do not consider this question further here;
see~\cite{Hols94e,AkHo02,ABH02a,Hols94d} for a complete discussion
in the specific case of the PBE.

\subsection{Discrete $L^{\infty}$ Estimates and Quasi-Optimal {\em A Priori}
            Error Estimates}

For completeness, we quickly answer the first two questions by stating a
result, giving only a very brief outline of how
the result is established for the new regularization, based on modifying
the analogous arguments in~\cite{CHX06b}.
We then focus entirely on the new AFEM contraction results which require
a more complete discussion.
To state the theorem, the following assumption is needed.
\begin{assumptionX}{A2}
For any two adjacent nodes $i$ and $j,$ assume that 
$$a_{ij} = (\epsilon \nabla \phi_i, \nabla \phi_j) \leqs -\frac{c}{h^2} \sum_{e_{ij}\subset \tau} |\tau|, \quad \mbox{ with } \quad c >0,$$
where $e_{ij}$ is the edge associated with these nodes, $\phi_i$ and $\phi_j$ are the basis functions corresponding to nodes $i,j$ respectively, and $|\tau|$ is the volume of $\tau\in \cT.$
\end{assumptionX} 
\begin{theorem}
	\label{thm:disc_apriori}
	If Assumption~A2 holds and $h$ is sufficiently small, the 
    solution to \eqref{eqn:disc} satisfies:
	$$\|u_h\|_{\infty} \leqs C,$$
	where $C$ is independent of $h.$
	Moreover, the quasi-optimal {\em a priori} error estimate holds:
	$$
		\|u-u_h\|_{1,2} \lesssim \inf_{v \in V_D(\cT_h)} \|u-v\|_{1,2}.
	$$
\end{theorem}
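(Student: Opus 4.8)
The plan is to treat the two assertions in sequence, since the $L^\infty$ bound feeds directly into the error estimate by controlling the nonlinearity. First I would establish the discrete $L^\infty$ bound $\|u_h\|_\infty \leqs C$ by mimicking the continuous argument of Lemma~\ref{lm:nonlinear} at the discrete level. The key point is that for a $C^0$ piecewise-linear finite element function, truncation operations like $(u_h - \beta)^+$ are again in the finite element space $V(\cT_h)$ only on the nodal level, not pointwise; this is exactly where Assumption~A2 enters. Writing $u_h = \sum_i U_i \phi_i$, one tests the discrete equation~\eqref{eqn:disc} against the discrete analogue of the cut-off functions $\overline\phi,\underline\phi$ built from the nodal values $(U_i - \beta^n)^+$ and $(\alpha^n - U_i)^+$. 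Using the stiffness-matrix sign condition $a_{ij}\leqs 0$ from Assumption~A2 (a discrete maximum principle condition, valid for $h$ small since the mass-type perturbation from the nonlinearity is lower order), together with the monotonicity and sign properties of $\kappa^2\sinh(\cdot + u^l_h)$ on the truncation sets, one shows the discrete Dirichlet form of the truncated part is $\leqs 0$ and $\geqs 0$ simultaneously, hence the truncations vanish. This gives $\alpha^n \leqs U_i \leqs \beta^n$ at every node, and by the maximum principle for linear interpolation, $\|u_h\|_\infty \leqs C$ with $C$ determined by $\alpha,\beta$ from Lemma~\ref{lm:nonlinear}, independent of $h$. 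I would also need the discrete linear part $u^l_h$ to be uniformly bounded in $L^\infty$, which follows from standard discrete maximum principle arguments under~A2 plus Assumption~A1 controlling the interface error.

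Second, for the quasi-optimal error estimate, the plan is a standard perturbed-coercivity (Strang-type) argument, now made rigorous precisely because both $u$ (by the remark following Theorem~\ref{thm:pbe-existence}) and $u_h$ (by the first part) lie in a fixed $L^\infty$ ball, so the nonlinearity $b$ satisfies the global Lipschitz bound~\eqref{eqn:K} with a single constant $K$. Fix any $v\in V_D(\cT_h)$ and write $u - u_h = (u - v) + (v - u_h)$, with $w_h := v - u_h \in V_0(\cT_h)$. Using Galerkin orthogonality in the form $a(u - u_h, w_h) + (b(u) - b(u_h), w_h) = 0$ (subtract~\eqref{eqn:disc} from~\eqref{eqn:pbe_weak} with test function $w_h$, noting the interface term cancels), one expands
\begin{equation*}
m\|w_h\|_{1,2}^2 \leqs a(w_h, w_h) = a(u - u_h, w_h) - a(u - v, w_h) = -(b(u) - b(u_h), w_h) - a(u - v, w_h).
\end{equation*}
The second term is bounded by $M\|u-v\|_{1,2}\|w_h\|_{1,2}$ via~\eqref{eqn:coercive-bounded}. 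For the first term, split $b(u) - b(u_h) = (b(u) - b(v)) + (b(v) - b(u_h))$; the piece $(b(u)-b(v), w_h)$ is controlled by $K\|u - v\|_2\|w_h\|_2 \lesssim K\|u-v\|_{1,2}\|w_h\|_{1,2}$ using~\eqref{eqn:K}, and the piece $(b(v) - b(u_h), w_h) = (b(v) - b(u_h), w_h)$ with $w_h = v - u_h$ is $\geqs 0$ by monotonicity of $\sinh$, so moving it to the left-hand side only helps. One subtlety: to apply~\eqref{eqn:K} to $b(u)-b(v)$ and $b(v)-b(u_h)$ we need $v$ itself in a fixed $L^\infty$ ball; this is arranged by noting it suffices to prove the estimate with the infimum over $v$ in the finite element space that also satisfies the discrete bounds (e.g. the nodal interpolant of $u$, or the truncation of any $v$), which does not change the value of $\inf_v\|u-v\|_{1,2}$ up to a constant since the continuous solution $u$ is itself in that ball.

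Collecting terms yields $(m - CK\,\|\cdot\|\text{-embedding})\|w_h\|_{1,2} \lesssim \|u-v\|_{1,2}$; here one uses that $\|w_h\|_2 \leqs \|w_h\|_{1,2}$ so the bad term has the same norm on both sides and can be absorbed only if $K$ is small relative to $m$ — which is \emph{not} assumed. The honest route, and the one I would actually follow, is instead the Schatz-type duality/Aubin–Nitsche trick: estimate $\|w_h\|_2$ by a duality argument against the linearized adjoint problem (whose coefficients are again uniformly bounded thanks to the $L^\infty$ bounds), gaining a factor that tends to $0$ as $h\to 0$, so that for $h$ sufficiently small the term $CK\|w_h\|_2\|w_h\|_{1,2}$ is genuinely absorbed into $m\|w_h\|_{1,2}^2$. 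This is the step I expect to be the main obstacle: making the absorption rigorous requires the elliptic regularity pickup for the linear interface problem (available under~A1 and smoothness of $\Gamma$, citing the references already invoked after~\eqref{eqn:linear}) together with the uniform $L^\infty$ control, and it is exactly the place where "$h$ sufficiently small" is used. Once $\|w_h\|_{1,2} \lesssim \|u - v\|_{1,2}$ is established for all admissible $v$, the triangle inequality $\|u - u_h\|_{1,2} \leqs \|u - v\|_{1,2} + \|w_h\|_{1,2} \lesssim \|u-v\|_{1,2}$ and taking the infimum over $v\in V_D(\cT_h)$ finishes the proof.
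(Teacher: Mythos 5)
Your proposal follows essentially the same route as the paper, which simply defers to the corresponding arguments in \cite{CHX06b}: a discrete cut-off/maximum-principle argument under Assumption~A2 (with $h$ small to keep the nonlinear mass contribution from destroying the M-matrix structure) for the $L^\infty$ bound, and then Galerkin orthogonality plus the Lipschitz property \eqref{eqn:K} and monotonicity of $\sinh$ for quasi-optimality.

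One correction to your own self-assessment of the second step: there is no absorption problem, and the Schatz-type duality detour you fall back on is unnecessary. In the splitting you wrote, the nonlinear contribution is $-(b(u)-b(v),w_h)-(b(v)-b(u_h),w_h)$ with $w_h=v-u_h$; the second piece is discarded by monotonicity and the first is bounded by $K\|u-v\|_2\|w_h\|_2$, i.e.\ it is linear in $\|w_h\|$ and carries a factor $\|u-v\|$, not $\|w_h\|^2$. So you directly obtain $m\|w_h\|_{1,2}^2\leqs (K+M)\|u-v\|_{1,2}\|w_h\|_{1,2}$, divide, and conclude by the triangle inequality with constant $(K+M+m)/m$ --- no smallness of $K/m$ and no duality pickup required. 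The hypothesis that $h$ be sufficiently small is consumed entirely by the discrete $L^\infty$ estimate (which in turn licenses applying \eqref{eqn:K} to the discrete functions), not by the error bound.
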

\begin{proof}
The proofs of both inequalities are similar to the proofs of
the corresponding results in~\cite{CHX06b}, 
with adjustment to handle the new stabilized splitting.
The second result hinges critically on the Lipschitz property~\eqref{eqn:K},
which in turn relies on the first result together with the continuous
$L^{\infty}$ estimates established in Lemma~\ref{lm:nonlinear}.
\end{proof}

\subsection{Adaptive Finite Element Methods (AFEM)}
\label{subsect:afem}
Adaptive Finite Element Methods (AFEM) build approximation spaces adaptively;
this is done in an effort to use nonlinear approximation so as to meet 
a target quality using spaces having (close to) minimal dimension.
AFEM algorithms are based on an iteration of the form:
$$
\hbox{$\textsf{SOLVE}$} \rightarrow \hbox{$\textsf{ESTIMATE}$} \rightarrow \hbox{$\textsf{MARK}$} \rightarrow \hbox{$\textsf{REFINE}$}
$$
which attempts to equi-distribute error over simplices using subdivision 
driven by {\em a posteriori} error estimates.
Given an initial triangulation $\mathcal T_0$, 
and a parameter $\theta \in (0,1]$, our particular AFEM generates a sequence 
of nested conforming triangulations $\mathcal T_h$, $h>0$,
driven by some local error indicator $\eta(u_h,\tau)$, which
gives rise to a global error indicator $\eta(u_h, \cT_h).$
Schematically, the adaptive algorithm consists of a loop of the following main steps:
\begin{enumerateX}
\item $u_h:=$ \textsf{SOLVE}$\left(\cT_h\right).$
\item $\{\eta(u_h, \tau)\}_{\tau\in \cT_h}:=$ \textsf{ESTIMATE}$\left(u_h, \cT_h\right).$
\item $\cM_h:=$\textsf{ MARK}$\left(\{\eta(u_h, \tau)\}_{\tau\in \cT_h}, \cT_h, \theta\right).$
\item $\cT_*:=$\textsf{REFINE}$\left(\cT_h, \cM_h, \ell\right).$
\end{enumerateX}
In practice, a stopping criteria is placed in Step (2) to terminate the loop.

We will handle each of the four steps as follows:
\begin{enumerateX}
\item \textsf{SOLVE}: We use standard inexact Newton + multilevel
      to produce $U \in V_D({\cT})$ on triangulation $\cT$ (cf.~\cite{Hols2001a}).
      To simplify the analysis here, we assume that the discrete solution
      $U$ is calculated exactly (no round-off error).
      Given a triangulation $\cT$, this defines the procedure:
$$
U := \textsf{SOLVE}(\cT).
$$
\item \textsf{ESTIMATE}: Given a triangulation $\cT$ and a function $U \in V_D({\cT}),$ we compute the elementwise residual error indicator:
$$
\{\eta(U,\tau)\}_{\tau \in \cT} := \textsf{ESTIMATE}(U,\cT).
$$
\item \textsf{MARK}: We use the standard ``D\"orfler marking'': 

Given $\theta \in (0,1]$, we construct a marked subset of elements 
$$
\cM := \textsf{MARK}(\{\eta(U,\tau)\}_{\tau\in \cT}, \cT, \theta) \subseteq \cT,
$$
such that:
\begin{equation}
   \label{eqn:dorfler-property}
   \eta(U,\cM) \geqs \theta \eta(U,\cT).
\end{equation} 
The residual-type error indicator  $\eta(U,\cM)$ over sub-partition $\cM \subseteq \cT$ will be defined precisely in Section~\ref{subsect:aposteriori},

\item \textsf{REFINE}: We use standard non-degenerate bisection-to-conformity 
    methods with known complexity bounds on
    conformity preservation (cf.~\cite{Stevenson.R2005b}).
    In particular, given a triangulation $\cT$, and marked subset $\cM \subseteq \cT$,
    and an integer $\ell \geqs 1$, we produce
$$
\cT_* := \textsf{REFINE}(\cT,\cM, \ell),
$$
    a conforming refinement of $\cT$ with each simplex in $\cM$
    refined at least $\ell$ times.
\end{enumerateX}

\subsection{Residual {\em A Posteriori} Estimates}
   \label{subsect:aposteriori}

To make precise the residual-type indicator, we first introduce some standard notation for
the relevant mathematical quantities, and then employ and establish
some of its properties.
\begin{center}
\begin{tabular}{lcl}

$\cT_0$
&=&
Initial conforming simplex triangulation of $\Omega \subset \mathbb{R}^d$. \\

$\cT_h$
&=&
Conforming refinement of $\cT_H$ at the previous step of AFEM. \\

$h_{\tau}$        
 & = &  The diameter of simplex $\tau \in \cT$. \\

$n_F$        
 & = & 
The normal vector to face $F$ of $\tau \in \cT$. \\

$\omega_{\tau}$   
 & = & 
 $~\bigcup~ \{~ \tilde{\tau} \in \cT ~|~
                         \tau \bigcap \tilde{\tau} \ne \emptyset,
                     ~\mathrm{where}~\tau \in \cT ~\}$. \\

$\omega_F$
& = & 
$~\bigcup~ \{~ \tilde{\tau} \in \cT ~|~
                     F \bigcap \tilde{\tau} \ne \emptyset,
                     ~\mathrm{where}~F~\mathrm{is~a~face~of~} \tau \in \cT ~\}$. \\
\end{tabular}
\end{center}
We can now define the following error indicators:
\begin{align}
\eta^2(u_h,{\tau}) &:= h_{\tau}^2 \|b(u_h)\|_{2,\tau}^2
                    + \frac{1}{2} \sum_{F \subset \partial \tau} h_{F} \|n_F \cdot [\epsilon \nabla u_h]\|_{2, F}^2 +
        \sum_{F \subset \partial \tau \cap \Gamma_{h}}h_F \|g_{\Gamma}\|_{2,F}^2,
\label{eqn:res}
\\
\mathrm{osc}^2(u_h,\tau) &:= h_{\tau}^4 \|\nabla u_h\|_{2,\tau}^2 + \sum_{F \subset \partial \tau \cap \Gamma_h}h_F \|g_{\Gamma}-\bar{g}_{\Gamma}\|_{2,F}^2,
\label{eqn:osc}
\end{align}
where $\bar{g}_{\Gamma}$ is the piecewise average on each 
face $F\subset \Gamma_{h}.$ 
For any subset $\cS \subset \cT$, the cumulative indicators are
defined as:
$$
\eta^2(u_h,\cS) := \sum_{\tau \in \cS} \eta^2(u_h,\tau),
\quad \quad
\mathrm{osc}^2(u_h,\cS) := \sum_{\tau \in \cS} \mathrm{osc}^2(u_h,\tau).
$$
From these definitions follows the monotonicity properties:
\begin{eqnarray}
\label{eqn:eta-monotonicity}
\eta(v, \cT_*) & \leqs & \eta(v,\cT), 
    \quad \forall v \in V_D(\cT),
\\
\label{eqn:osc-monotonicity}
\mathrm{osc}(v,\cT_*) & \leqs & \mathrm{osc}(v,\cT),
    \quad \forall v \in V_D(\cT),
\end{eqnarray}
for any refinement $\cT_*$ of $\cT.$
We have then the following global upper bound 
from \cite{Hols2001a,Verfurth.R1994}; the lower-bound 
is also standard and can be found in e.g. \cite{Verfurth.R1994}.
\begin{lemma}[Upper and lower bounds]
   \label{L:estimator-bounds}
Let $u$ and $u_h$ be the solutions to \eqref{eqn:pbe_weak} and \eqref{eqn:disc}, respectively. If the mesh conditions in Assumptions~A1 and~A2 hold, then there exists constants $C_1$ and $C_2$, depending only on $\cT_0$
and the ellipticity constant, such that the following global upper
and lower bounds hold:
\begin{eqnarray}
\label{eqn:upper}
\tbar u-u_h \tbar^2 &\leqs& C_1 \eta^2(u_h,\cT_h),
\\
\label{eqn:lower}
C_2 \eta^2(u_h,\tau) &\leqs& \tbar u-u_h \tbar_{\omega_{\tau}}^2 + \mathrm{osc}^2(u_h,\omega_{\tau}),
\end{eqnarray}
where $ \tbar v \tbar_{\omega_{\tau}}^{2} = \int_{\omega_{\tau}} \epsilon |\nabla v|^{2} dx.$
\end{lemma}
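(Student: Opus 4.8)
The plan is to establish the two bounds separately by the now-standard residual analysis, with the only non-routine care being the presence of the interface $\Gamma$ (with the flux jump $g_\Gamma$) and the fact that $\Gamma_h$ only matches $\Gamma$ to second order (Assumption~A1). I would first record the Galerkin orthogonality: subtracting~\eqref{eqn:disc} from~\eqref{eqn:pbe_weak} gives $a(u-u_h,v_h) + (b(u)-b(u_h),v_h) = 0$ for all $v_h\in V_0(\cT_h)$. For the upper bound~\eqref{eqn:upper}, I would test the error equation against $e = u-u_h \in H_0^1(\Omega)$, write $\tbar e\tbar^2 = a(e,e)$, subtract a quasi-interpolant $v_h = I_h e \in V_0(\cT_h)$ using Galerkin orthogonality, and then integrate by parts element-by-element. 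This converts the bulk term into $\sum_\tau (b(u_h), e-v_h)_\tau$ (note $-\nabla\cdot(\epsilon\nabla u_h)=0$ elementwise since $u_h$ is piecewise linear) plus the interior face terms $\sum_F (n_F\cdot[\epsilon\nabla u_h], e-v_h)_F$, and on faces lying on $\Gamma_h$ the natural boundary contribution $(g_\Gamma, e-v_h)_F$. Applying Cauchy--Schwarz, the Clément/Scott--Zhang interpolation estimates $\|e-I_h e\|_{2,\tau}\lesssim h_\tau\|\nabla e\|_{2,\omega_\tau}$ and $\|e-I_h e\|_{2,F}\lesssim h_F^{1/2}\|\nabla e\|_{2,\omega_F}$, and summing with finite-overlap of patches yields $\tbar e\tbar^2 \lesssim \eta(u_h,\cT_h)\,\tbar e\tbar$, hence~\eqref{eqn:upper}. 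The nonlinear term is harmless here because it is simply absorbed into $\|b(u_h)\|_{2,\tau}$, which is finite by the discrete $L^\infty$ bound of Theorem~\ref{thm:disc_apriori}.

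For the lower bound~\eqref{eqn:lower} I would use the classical Verf\"urth bubble-function technique localized to each simplex $\tau$ and each face $F\subset\partial\tau$. Multiplying the interior residual $b(u_h)$ (a polynomial on $\tau$) by the interior bubble $\psi_\tau$, extending by zero, and testing the error equation against $\psi_\tau b(u_h)$ gives $h_\tau^2\|b(u_h)\|_{2,\tau}^2 \lesssim \tbar e\tbar_\tau^2 + (\text{data oscillation on }\tau)$, where the oscillation term $h_\tau^4\|\nabla u_h\|_{2,\tau}^2$ appears from replacing $b(u_h)$ against its polynomial approximation and from the mismatch between $-\nabla\cdot(\epsilon\nabla u)$ and the discrete residual; similarly, using edge/face bubbles $\psi_F$ extended into $\omega_F$ controls $h_F\|n_F\cdot[\epsilon\nabla u_h]\|_{2,F}^2$ and, on $\Gamma_h$, $h_F\|g_\Gamma\|_{2,F}^2$, again up to oscillation $\sum_{F\subset\Gamma_h}h_F\|g_\Gamma-\bar g_\Gamma\|_{2,F}^2$. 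Summing these local estimates over the faces of $\tau$ and over the patch $\omega_\tau$ gives~\eqref{eqn:lower}.

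I expect the main obstacle to be the treatment of the interface terms under Assumption~A1: since $\Gamma_h\ne\Gamma$, the flux jump condition $[\epsilon\,\partial_n u]_\Gamma = g_\Gamma$ holds on $\Gamma$, not on the discrete faces in $\Gamma_h$, so the integration by parts produces a geometric consistency error between $\int_{\Gamma}$ and $\int_{\Gamma_h}$. The point is that because $d(\Gamma,\Gamma_h)\lesssim h^2$ and $g_\Gamma$ and the relevant traces are smooth on $\Gamma$ (as assumed in Section~\ref{subsect:regforms2}), this discrepancy is $O(h^2)$ in the appropriate sense and is dominated by $\eta$ and $\mathrm{osc}$; one absorbs it either into the oscillation terms or into a higher-order perturbation that is controlled exactly as in~\cite{CHX06b}. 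Everything else — the interpolation estimates, the bubble-function inverse inequalities, the finite-overlap summation — is entirely standard, which is why the statement quotes~\cite{Hols2001a,Verfurth.R1994} directly.
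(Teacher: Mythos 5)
Your proposal is correct and follows essentially the same route as the paper, which simply invokes the standard Verf\"urth residual argument (citing Theorem~7.1 of \cite{CHX06b}) and spells out only the one nonstandard ingredient: the oscillation bound $\|b(u_h)-\bar b(u_h)\|_{2,\tau}\lesssim h_\tau\|\nabla u_h\|_{2,\tau}$, obtained from the Lipschitz property of $\kappa^2\sinh$ on $[u_-,u_+]$ via the discrete $L^\infty$ estimate of Theorem~\ref{thm:disc_apriori} --- exactly the mechanism you identify when you attribute the $h_\tau^4\|\nabla u_h\|_{2,\tau}^2$ term to replacing $b(u_h)$ by a polynomial approximation. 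The only step worth making explicit in your upper-bound argument is that after testing with $e=u-u_h$ the term $(b(u)-b(u_h),e)$ has a favorable sign by monotonicity of $\sinh$ (or is controlled via the Lipschitz condition \eqref{eqn:K}), so it can be discarded before the residual functional is localized and estimated.
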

\begin{proof}
Similar to \cite[Theorem 7.1]{CHX06b}, the proof follows the idea of~\cite{Verfurth.R1994} by noticing 
\begin{eqnarray*}
\|b(u_h) - \overline{b}(u_h)\|_{2,\tau} &\leqs& \|\kappa^2(\sinh(u_h) - \sinh(\overline{u}_h))\|_{2,\tau}\\
&\leqs & \sup_{\chi\in [u_-, u_+]} \|\kappa^2\cosh(\chi)\|_{\infty,\tau}\|u_h - \overline{u}_h\|_{2,\tau}\\
&\leqs & C(\kappa, u_-, u_+)h_{\tau} \|\nabla u_h\|_{2,\tau},
\end{eqnarray*}
where we have used Theorem \ref{thm:disc_apriori}. The remaining proof is the same as \cite[Theorem 7.1]{CHX06b}.
\end{proof}
Note for the convergence analysis here, we do not need the lower bound \eqref{eqn:lower}.

\section{Convergence of AFEM}
\label{sec:conv}

We now develop a convergence analysis of the AFEM iteration by showing 
contraction.
We must establish two additional key auxiliary results first:
an indicator reduction result, and a quasi-orthogonality result,
which generalize two analogous results for the linear case in
\cite{Cascon.J;Kreuzer.C;Nochetto.R;Siebert.K2008}
to a class of nonlinear problems that includes the Poisson-Boltzmann equation.

\subsection{An Indicator Reduction Lemma}
  \label{sec:indicator-reduction}

Here we establish a nonlinear generalization of the indicator
reduction result from~\cite[Corollary 4.4]{Cascon.J;Kreuzer.C;Nochetto.R;Siebert.K2008}.
First we prove a 
local perturbation result for the nonlinear
equation (cf.~\cite[Proposition~4.3]{Cascon.J;Kreuzer.C;Nochetto.R;Siebert.K2008}).
We then establish an indicator reduction result.

Let us first introduce a type of nonlinear PDE-specific indicator:
\begin{eqnarray*}
	\eta^2({\bD},\tau) &:=& \|\epsilon\|_{\infty,\omega_{\tau}}^2 + h_{\tau}^2\sup_{\chi\in [u_-, u_+]} \|\kappa^2 \cosh(\chi)\|_{\infty,\tau}^2\\
\end{eqnarray*}
For any subset $\cS\subset \cT,$ let $\eta({\bD},\cS):= \max_{\tau\in \cS}\{\eta({\bD},\tau)\}.$ By the definition, it is obvious that $\eta({\bD},\cT)$ is monotone decreasing, i.e.,
\begin{equation}
	\label{eqn:data-monotone}
		\eta({\bD},\cT_*) \leqs \eta({\bD},\cT)
\end{equation}
for any refinement $\cT_*$ of $\cT.$

We now establish (see also~\cite{HTZ08a}) 
a nonlinear generalization of the local
perturbation result appearing as 
in~\cite[Proposition~4.3]{Cascon.J;Kreuzer.C;Nochetto.R;Siebert.K2008}.
This is the key result in generalizing the contraction results in 
\cite[Proposition~4.3]{Cascon.J;Kreuzer.C;Nochetto.R;Siebert.K2008} 
to the semilinear case.
\begin{lemma}[Nonlinear Local Perturbation]
   \label{L:perturbation}
Let $\cT$ be a conforming partition satisfies Assumptions A1 and~A2.
For all $\tau \in \cT$ and for any pair of discrete functions
$v,w \in [u_-,u_+] \cap V_D(\cT),$ it holds that
\begin{eqnarray}
\eta(v,\tau) &\leqs& \eta( w,\tau) 
     + \bar{\Lambda}_1 \eta(\mathbf{D},\tau) \|v-w\|_{1,2,\omega_{\tau}},
\end{eqnarray}
where $\bar{\Lambda}_1 > 0$ depends only on the shape-regularity of $\cT_0,$
 and the maximal values that $b$ can
obtain on the $L^{\infty}$-interval $[u_-,u_+].$
\end{lemma}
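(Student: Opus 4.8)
The plan is to estimate each of the three contributions to $\eta^2(v,\tau)$ (the interior residual term, the jump term, and the interface data term) by the corresponding contribution to $\eta^2(w,\tau)$ plus a perturbation controlled by $\|v-w\|_{1,2,\omega_\tau}$ and the data indicator $\eta(\mathbf D,\tau)$. Since the interface data term $\sum_{F\subset\partial\tau\cap\Gamma_h}h_F\|g_\Gamma\|_{2,F}^2$ is independent of the discrete function, it appears identically in both $\eta(v,\tau)$ and $\eta(w,\tau)$ and contributes nothing to the perturbation; so only the first two terms require work. Throughout I would work with the squared quantities and pass back to the unsquared form at the end using the elementary inequality $(\sum_i a_i^2)^{1/2}\leqs(\sum_i b_i^2)^{1/2} + (\sum_i c_i^2)^{1/2}$ whenever $a_i\leqs b_i + c_i$, i.e.\ a triangle inequality in $\ell^2$.

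For the jump term, I would use $n_F\cdot[\epsilon\nabla v] = n_F\cdot[\epsilon\nabla w] + n_F\cdot[\epsilon\nabla(v-w)]$, apply the triangle inequality on each face $F\subset\partial\tau$, and then control $h_F^{1/2}\|n_F\cdot[\epsilon\nabla(v-w)]\|_{2,F}$ by a scaled trace/inverse inequality on the two simplices sharing $F$: since $v-w$ is piecewise linear, $\|n_F\cdot[\epsilon\nabla(v-w)]\|_{2,F}\lesssim \|\epsilon\|_{\infty,\omega_F} h_F^{-1/2}\|\nabla(v-w)\|_{2,\omega_F}$, so the whole face contribution is bounded by $C\|\epsilon\|_{\infty,\omega_\tau}\|\nabla(v-w)\|_{2,\omega_\tau}$ with $C$ depending only on shape-regularity of $\cT_0$ (Assumptions A1, A2 giving the needed mesh regularity). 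For the interior residual term $h_\tau\|b(v)\|_{2,\tau}$, I would write $h_\tau\|b(v)\|_{2,\tau}\leqs h_\tau\|b(w)\|_{2,\tau} + h_\tau\|b(v)-b(w)\|_{2,\tau}$ and then invoke the Lipschitz bound on $b$ valid on $[u_-,u_+]$: $\|b(v)-b(w)\|_{2,\tau}\leqs \bigl(\sup_{\chi\in[u_-,u_+]}\|\kappa^2\cosh(\chi)\|_{\infty,\tau}\bigr)\|v-w\|_{2,\tau}$, exactly as in the estimator-bound proof of Lemma~\ref{L:estimator-bounds}. This produces $h_\tau\|b(v)-b(w)\|_{2,\tau}\leqs h_\tau\sup_{\chi\in[u_-,u_+]}\|\kappa^2\cosh(\chi)\|_{\infty,\tau}\,\|v-w\|_{2,\tau}$, and the prefactor is precisely the second ingredient in the definition of $\eta(\mathbf D,\tau)$, while $\|\epsilon\|_{\infty,\omega_\tau}$ absorbs the jump-term prefactor.

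Combining, both perturbation pieces are bounded by (a shape-regularity constant times) $\eta(\mathbf D,\tau)\,\|v-w\|_{1,2,\omega_\tau}$, since $\eta(\mathbf D,\tau)^2 = \|\epsilon\|_{\infty,\omega_\tau}^2 + h_\tau^2\sup_{\chi}\|\kappa^2\cosh(\chi)\|_{\infty,\tau}^2$ dominates each prefactor individually and $\|v-w\|_{2,\omega_\tau}\leqs\|v-w\|_{1,2,\omega_\tau}$, $\|\nabla(v-w)\|_{2,\omega_\tau}\leqs\|v-w\|_{1,2,\omega_\tau}$. Applying the $\ell^2$ triangle inequality across the (at most $d+1$ faces plus one interior) terms yields $\eta(v,\tau)\leqs\eta(w,\tau) + \bar\Lambda_1\eta(\mathbf D,\tau)\|v-w\|_{1,2,\omega_\tau}$, with $\bar\Lambda_1$ depending only on shape-regularity of $\cT_0$ (through the trace/inverse constants and the number of faces) and on the maximal values $b$ attains on $[u_-,u_+]$ (through the Lipschitz/$\cosh$ bound). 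The main obstacle I anticipate is bookkeeping rather than conceptual: carefully tracking that the constant $\bar\Lambda_1$ genuinely depends only on $\cT_0$ and on $[u_-,u_+]$ — in particular verifying that the scaled trace inequality constant is uniform over the adaptively refined meshes (which follows from the non-degenerate bisection guaranteeing a uniform shape-regularity bound inherited from $\cT_0$), and that one may safely isolate the $\|\epsilon\|_{\infty,\omega_\tau}$ and $h_\tau^2\sup_\chi\|\kappa^2\cosh(\chi)\|_{\infty,\tau}^2$ factors from the common prefactor $\eta(\mathbf D,\tau)$ without circular dependence on $h$.
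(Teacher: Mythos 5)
Your proposal is correct and follows essentially the same route as the paper's proof: a triangle inequality applied termwise to the estimator (with the $g_\Gamma$ term cancelling), the Lipschitz bound $\|b(v)-b(w)\|_{2,\tau}\leqs\sup_{\chi\in[u_-,u_+]}\|\kappa^2\cosh(\chi)\|_{\infty,\tau}\|v-w\|_{2,\tau}$ for the interior residual, and a scaled trace/inverse inequality giving $\|n_F\cdot[\epsilon\nabla(v-w)]\|_{2,F}\leqs\|\epsilon\|_{\infty,\omega_\tau}h_\tau^{-1/2}\|\nabla(v-w)\|_{2,\omega_\tau}$ for the jump term. Your additional bookkeeping on the uniformity of the trace constant under bisection and the $\ell^2$ triangle inequality is sound and merely makes explicit what the paper leaves implicit.
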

\begin{proof}
By the definition \eqref{eqn:res} of $\eta,$  we have
\begin{eqnarray*}
	\eta(v,\tau) & \lesssim & \eta(w,\tau) + h_{\tau} \|b(v) - b(w)\|_{2,\tau} + \frac{1}{2}\sum_{F\subset \partial \tau}h_F^{\frac{1}{2}}\|n_F\cdot [\epsilon  \nabla (v-w)]\|_{2,F}
\end{eqnarray*}
Notice that 
$$
	\|b(v) - b(w)\|_{2,\tau}=\|\kappa^2(\sinh(v) -\sinh(w))\|_{2,\tau}\leqs \left(\sup_{\chi\in [u_-, u_+]} \|\kappa^2 \cosh(\chi)\|_{\infty,\tau}\right) \|v-w\|_{2,\tau}.
$$
On the other hand, we also have
$$\|n_F\cdot [\epsilon  \nabla (v-w)]\|_{2,F} \leqs \|\epsilon\|_{\infty,\omega_{\tau}}h_{\tau}^{-\frac{1}{2}}\|\nabla v - \nabla w\|_{2,\omega_{\tau}}.$$
 Therefore ,we get the desired estimate for $\eta.$
\end{proof}

Based on Lemma \ref{L:perturbation}, 
we have the following main estimator reduction
(see also~\cite{HTZ08a}), which generalizes
the linear case appearing in 
\cite[Corollary~4.4]{Cascon.J;Kreuzer.C;Nochetto.R;Siebert.K2008}.
\begin{lemma}[Nonlinear Estimator Reduction]
   \label{L:estimator-reduction}
Let $\cT$ be a partition which satisfies the mesh conditions in
Assumptions~A1 and~A2, and let 
the parameters $\theta \in (0,1]$ and $\ell \geqs 1$ be given. 
Let $\cM=\textsf{MARK}(\{\eta(v,\tau)\}_{\tau\in \cT},\cT,\theta)$,
and let $\cT_* = \textsf{REFINE}(\cT,\cM, \ell).$
If $\Lambda_1 = (d+1)\bar{\Lambda}_1^2/\ell$ with $\bar{\Lambda}_1$
from Lemma~\ref{L:perturbation} and $\lambda = 1-2^{-(\ell/d)} > 0$,
then for all $v \in [u_-,u_+] \cap V_D(\cT),$ 
$v_* \in [u_-,u_+] \cap V_D(\cT_*),$ 
and any $\delta > 0$, it holds that
$$
\eta^2(v_*,\cT_*) 
  \leqs (1 + \delta) [ \eta^2(v,\cT) - \lambda \eta^2(v,\cM) ]
 + (1 + \delta^{-1}) \Lambda_1 \eta^2(\mathbf{D},\cT_0)
                  \tbar v_* - v \tbar^2.
$$
\end{lemma}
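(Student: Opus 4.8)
The plan is to follow the structure of the linear estimator reduction proof in \cite[Corollary~4.4]{Cascon.J;Kreuzer.C;Nochetto.R;Siebert.K2008}, substituting the nonlinear local perturbation result (Lemma~\ref{L:perturbation}) for the linear one at the crucial step. First I would split the elements of $\cT_*$ into those that were refined at least $\ell$ times (which includes all of $\cM$) and those that were not touched, i.e. elements of $\cT \cap \cT_*$. For an untouched element $\tau$, I would note that $v_*|_\tau = v|_\tau$ is impossible in general, but $\eta(v_*,\tau)$ is controlled by $\eta(v,\tau)$ plus a perturbation term $\bar\Lambda_1 \eta(\mathbf{D},\tau)\|v_*-v\|_{1,2,\omega_\tau}$ via Lemma~\ref{L:perturbation}. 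For a refined element, I would exploit the fact that bisecting $\ell$ times shrinks the element diameter by a factor $2^{-\ell/d}$ in each linear dimension, so the leading $h_\tau$-weighted terms in $\eta^2$ contract by $\lambda = 1 - 2^{-(\ell/d)}$; this is precisely where the $\ell \geqs 1$ hypothesis and the D\"orfler marking \eqref{eqn:dorfler-property} enter, yielding the bound $\eta^2(v,\cT) - \lambda\eta^2(v,\cM)$ for the ``unperturbed'' part.

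Next I would assemble these two contributions. Writing $\eta(v_*,\tau) \leqs \eta^{\text{ref}}(v,\tau) + \bar\Lambda_1 \eta(\mathbf{D},\tau)\|v_*-v\|_{1,2,\omega_\tau}$ where $\eta^{\text{ref}}$ denotes the appropriately contracted indicator, I would square both sides and apply Young's inequality in the form $(a+b)^2 \leqs (1+\delta)a^2 + (1+\delta^{-1})b^2$ for the free parameter $\delta > 0$. Summing over all $\tau \in \cT_*$ gives $(1+\delta)$ times the contracted indicator sum plus $(1+\delta^{-1})\bar\Lambda_1^2 \sum_\tau \eta^2(\mathbf{D},\tau)\|v_*-v\|_{1,2,\omega_\tau}^2$. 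For the first piece I would use the bisection contraction together with the D\"orfler property to obtain $\eta^2(v,\cT) - \lambda\eta^2(v,\cM)$ (this step is essentially the same bookkeeping as in the linear case, since the geometric scaling of the mesh-dependent weights is identical). For the perturbation piece, I would bound $\eta(\mathbf{D},\tau) \leqs \eta(\mathbf{D},\cT) \leqs \eta(\mathbf{D},\cT_0)$ using the data monotonicity \eqref{eqn:data-monotone}, pull this constant out, and then control the remaining sum $\sum_\tau \|v_*-v\|_{1,2,\omega_\tau}^2$ by $(d+1)\|v_*-v\|_{1,2,\Omega}^2$ (each element lies in at most $d+1$ patches $\omega_\tau$ in a shape-regular simplicial mesh), and finally pass from $\|\cdot\|_{1,2}$ to the energy norm $\tbar\cdot\tbar$ via the norm equivalence \eqref{eqn:equiv}, absorbing all constants into $\Lambda_1 = (d+1)\bar\Lambda_1^2/\ell$.

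The main obstacle I anticipate is getting the factor $\lambda = 1 - 2^{-(\ell/d)}$ exactly right for \emph{all} the mesh-weighted terms in $\eta^2$ simultaneously — namely the $h_\tau^2\|b(u_h)\|_{2,\tau}^2$ volume term, the $h_F\|n_F\cdot[\epsilon\nabla u_h]\|_{2,F}^2$ jump terms, and the $h_F\|g_\Gamma\|_{2,F}^2$ interface terms — since each scales slightly differently under bisection and one must verify that the $\ell$-fold refinement reduces each by at least the factor $2^{-\ell/d}$ so that a single $\lambda$ works uniformly; the interface term in particular requires care because Assumption~A1 only guarantees $d(\Gamma,\Gamma_h) \leqs ch^2$, so one must check that refining does not create new interface faces outside $\cM$ that spoil the contraction (this is handled by the fact that $\eta(v,\cT_*) \leqs \eta(v,\cT)$ by the monotonicity \eqref{eqn:eta-monotonicity}, absorbing any such boundary effects into the unrefined part). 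Everything else is routine: the nonlinear content is entirely encapsulated in Lemma~\ref{L:perturbation}, whose proof already absorbed $\sup_{\chi\in[u_-,u_+]}\|\kappa^2\cosh(\chi)\|_\infty$ (finite by Theorem~\ref{thm:disc_apriori} and Lemma~\ref{lm:nonlinear}) into the constant, so from the present vantage point the argument is structurally identical to the linear one.
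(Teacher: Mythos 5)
Your proposal is correct and follows essentially the same route as the paper's proof: apply the nonlinear local perturbation lemma element-by-element on the fine mesh, square via Young's inequality with parameter $\delta$, sum using the finite overlap of the patches $\omega_{\tau}$ and the norm equivalence \eqref{eqn:equiv}, invoke the bisection scaling $\eta^2(v,\cT_*) \leqs \eta^2(v,\cT\setminus\cM) + 2^{-\ell/d}\eta^2(v,\cM)$ on marked elements, and finish with the data monotonicity \eqref{eqn:data-monotone}. One minor correction: the D\"orfler property \eqref{eqn:dorfler-property} is not actually used in this lemma — the term $-\lambda\eta^2(v,\cM)$ comes solely from the fact that every element of $\cM$ is bisected at least $\ell$ times, and the marking property only enters later in the contraction theorem.
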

\begin{proof}
We follow the proof in \cite[Corollary~4.4]{Cascon.J;Kreuzer.C;Nochetto.R;Siebert.K2008} closely.
We first apply Lemma~\ref{L:perturbation} with $v$ and $v_*$
taken to be in $V_D(\cT_*).$
This gives
$$
\eta(v_*,\tau_*)^2 
\leqs 
(1+\delta) \eta(v,\tau_*) 
+
(1+\delta^{-1}) \bar{\Lambda}_1^2 \eta(\mathbf{D},\tau_*) 
    \|v_*-v\|_{1,2,\omega_{\tau_*}}\quad \forall \tau_*\in \cT_*,
$$
after applying Young's inequality with $\delta > 0.$
We now sum over the elements $\tau_* \in \cT_*,$ using the fact
that for shape regular partitions there is a small finite
number of elements in the overlaps of the patches $\omega_{\tau_*}.$
This gives
$$
\eta(v_*,\cT_*)^2 
\leqs 
(1+\delta) \eta(v,\cT_*) 
+
(1+\delta^{-1}) \Lambda_1^2 \eta(\mathbf{D},\cT_*) 
    \tbar v_*-v \tbar,
$$
where we have also used the equivalence~\eqref{eqn:equiv}.

Now let $v\in [u_-, u_+]\cap V_D(\cT),$ a short argument from the proof of Corollary 4.4 in~\cite{Cascon.J;Kreuzer.C;Nochetto.R;Siebert.K2008} 
gives
\begin{equation}
   \label{eqn:eta-reduct}
\eta^2(v,\cT_*) \leqs \eta^2(v,\cT \setminus \cM) + 2^{-(\ell/d)} \eta^2(v,\cM)
  = \eta^2(v,\cT) - \lambda \eta^2(v,\cM).
\end{equation}
Finally, the monotonicity properties
$\eta(\mathbf{D},\cT_*) \leqs \eta(\mathbf{D},\cT_0),$ combined with~\eqref{eqn:eta-reduct} yields the result.
\end{proof}

\subsection{Quasi-Orthogonality for Nonlinear Problems}
  \label{sec:quasi-orthogonality}


Following~\cite{HTZ08a}, we now
establish a quasi-orthogonality result that represents the last technical
result needed to generalize the convergence framework from
\cite{Cascon.J;Kreuzer.C;Nochetto.R;Siebert.K2008} 
to the nonlinear case.
\begin{lemma}[Quasi-orthogonality]
  \label{L:quasi-orthog}
	Let $u$ be the exact solution to equation \eqref{eqn:pbe_weak},  and $u_h$ be the solution to \eqref{eqn:disc} on a partition $\cT_{h}$ which satisfies the conditions in Assumptions~A1 and~A2. Assume that there exist a $\sigma_h>0$ with $\sigma_h\to 0$ as $h\to 0$ such that 
\begin{equation}
\label{eqn:aubin-nitsche}
	\|u-u_h\|_2 \leqs \sigma_h\|\nabla u-\nabla u_h\|_2,
\end{equation}
Then there exists a constant $C^*>0$, 
such that for sufficiently small $h,$ we have 
\begin{equation}
\tbar u-u_h \tbar^2 \leqs \Lambda_h \tbar u-u_H \tbar^2 
   - \tbar u_h - u_H \tbar^2,
\label{eqn:quasi-orthog}
\end{equation}
where $\Lambda_h = (1 - C^* \sigma_h K)^{-1}>0$ with $K = \sup_{\chi\in [u_-, u_+]}\|\kappa^2\cosh(\chi)\|_{\infty}.$
\end{lemma}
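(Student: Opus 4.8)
The plan is to prove the quasi-orthogonality relation \eqref{eqn:quasi-orthog} by combining the exact orthogonality available in the linear case with a controlled perturbation coming from the nonlinearity $b$. Since $V_0(\cT_H) \subset V_0(\cT_h)$ and both $u_h, u_H$ are Galerkin solutions of \eqref{eqn:disc} (on the respective meshes), we first record the Galerkin identity: for all $v \in V_0(\cT_h)$,
\begin{equation*}
a(u - u_h, v) = -\,(b(u) - b(u_h), v),
\end{equation*}
and similarly with $u_H$ in place of $u_h$ against test functions in $V_0(\cT_H)$.

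First I would expand the Pythagoras-type identity in the energy inner product. Writing $u - u_H = (u - u_h) + (u_h - u_H)$ and using bilinearity of $a(\cdot,\cdot)$,
\begin{equation*}
\tbar u - u_H \tbar^2 = \tbar u - u_h \tbar^2 + \tbar u_h - u_H \tbar^2 + 2\,a(u - u_h, u_h - u_H).
\end{equation*}
In the purely linear case the cross term vanishes because $u_h - u_H \in V_0(\cT_h)$ is an admissible test function and Galerkin orthogonality kills $a(u - u_h, \cdot)$ on $V_0(\cT_h)$. Here, instead, $a(u - u_h, u_h - u_H) = -(b(u) - b(u_h), u_h - u_H)$, so the cross term is not zero but is controlled by the Lipschitz bound \eqref{eqn:K}: since $u, u_h$ both lie in the interval $[u_-, u_+]$ (by Lemma~\ref{lm:nonlinear} and Theorem~\ref{thm:disc_apriori}),
\begin{equation*}
|(b(u) - b(u_h), u_h - u_H)| \leqs K \|u - u_h\|_2\, \|u_h - u_H\|_2.
\end{equation*}

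Next I would absorb this term using the Aubin--Nitsche hypothesis \eqref{eqn:aubin-nitsche} and the norm equivalence \eqref{eqn:equiv}. Bounding $\|u_h - u_H\|_2 \leqs \|u - u_h\|_2 + \|u - u_H\|_2$ and then applying \eqref{eqn:aubin-nitsche} to each term, followed by \eqref{eqn:equiv} to pass to energy norms, the cross term is dominated by $C^* \sigma_h K\, \big( \tbar u - u_h \tbar^2 + \tbar u - u_h \tbar\, \tbar u - u_H \tbar \big)$ for a suitable constant $C^*$ depending on the ellipticity constants and the Poincaré constant; a further Young's inequality on the mixed product (with a carefully chosen weight) keeps everything proportional to $\tbar u - u_h \tbar^2$ and $\tbar u - u_H \tbar^2$. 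Rearranging, for $h$ small enough that $C^* \sigma_h K < 1$ one obtains $(1 - C^*\sigma_h K)\, \tbar u - u_h \tbar^2 \leqs \tbar u - u_H \tbar^2 - \tbar u_h - u_H \tbar^2$, which is \eqref{eqn:quasi-orthog} with $\Lambda_h = (1 - C^* \sigma_h K)^{-1}$.

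The main obstacle is the bookkeeping in the last step: the cross term naturally produces a mixed product $\tbar u - u_h \tbar\, \tbar u - u_H \tbar$, and one must split it via Young's inequality in such a way that the $\tbar u - u_h \tbar^2$ contribution can be moved to the left-hand side without spoiling the coefficient of $\tbar u_h - u_H \tbar^2$ (which must remain exactly $1$, or at least $\geqs 1$, so that the contraction argument downstream goes through). Getting $\sigma_h$ rather than an uncontrolled constant in front of $K$ is precisely what makes $\Lambda_h \to 1$ as $h \to 0$, so it is essential that every appearance of $\|u - u_h\|_2$ or $\|u - u_H\|_2$ in the estimate of the cross term be converted to an energy norm via \eqref{eqn:aubin-nitsche}, incurring one factor of $\sigma_h$ each time; one should double-check that no term escapes with only a generic constant.
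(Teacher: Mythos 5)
Your proposal is correct, and its skeleton---the energy-norm expansion of $\tbar u-u_H\tbar^2$, conversion of the cross term via the Galerkin relation into $(b(u)-b(u_h),u_h-u_H)$, the Lipschitz bound with the constant $K$ coming from the $L^\infty$ barriers, and the use of \eqref{eqn:aubin-nitsche} to gain a factor of $\sigma_h$---is exactly the paper's. The only divergence is in how the cross term is absorbed. You split $\|u_h-u_H\|_2\leqs\|u-u_h\|_2+\|u-u_H\|_2$ and apply \eqref{eqn:aubin-nitsche} to both pieces; note that the hypothesis as stated covers only $u-u_h$ on $\cT_h$, so for the $u-u_H$ piece you must either invoke the analogous duality estimate on $\cT_H$ or, more simply, use coercivity ($\|u-u_H\|_2\leqs m^{-1/2}\tbar u-u_H\tbar$), which costs nothing because the single factor of $\sigma_h$ you need is already supplied by the $\|u-u_h\|_2$ factor. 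The paper instead keeps $\|u_h-u_H\|_2$ intact, bounds it only by $m^{-1/2}\tbar u_h-u_H\tbar$, and applies Young's inequality with the $h$-dependent weight $\delta=K/(\sqrt{C(\epsilon)m}\,\sigma_h)$; this balances the two resulting coefficients so that both equal $C^*\sigma_h K$, and the coefficient of $\tbar u_h-u_H\tbar^2$ divides out to exactly $1$, giving $\Lambda_h=(1-C^*\sigma_h K)^{-1}$ in one stroke. Your route yields $\Lambda_h$ of the form $(1+C''\sigma_h K)/(1-C'\sigma_h K)$, which is bounded by $(1-C^*\sigma_h K)^{-1}$ for $C^*\geqs C'+C''$ and $\sigma_h$ small, so the stated conclusion still follows; and since dividing by $1-C'\sigma_h K<1$ only strengthens the $-\tbar u_h-u_H\tbar^2$ term, the coefficient you were (rightly) worried about remains $\leqs -1$. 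One small correction to your closing remark: it is not essential that \emph{every} $L^2$ factor in the cross term be converted through \eqref{eqn:aubin-nitsche}---exactly one factor of $\sigma_h$ is needed and is all the paper obtains; the second $L^2$ factor requires only the generic coercivity constant.
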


\begin{proof}
We compute the energy norm:
\begin{align*}
\tbar u-u_H \tbar^2
    &= a(u - u_H,u-u_H)
     = a(u-u_h + u_h - u_H,u-u_h + u_h - u_H)
\\
    &= \tbar u-u_h\tbar^2 + \tbar u_h-u_H \tbar^2
       + 2a(u-u_h,u_h-u_H).
\end{align*}
By the definition of $u$ and $u_h,$  we have
$$
a(u-u_h,v_h) + (b(u) - b(u_h), v_h ) = 0,
   ~~~\forall v_h \in V_0(\cT_h).
$$
In particular, this holds for $v_h=u_h-u_H.$
By this relation, we obtain
\begin{equation*}
\tbar u-u_H \tbar^2
    = \tbar u-u_h \tbar^2 + \tbar u_h-u_H\tbar^2
       + 2(\kappa^2 (\sinh(u)-\sinh(u_h)),u_h-u_H).
\end{equation*}
Therefore, by Young's inequality and 
the assumption \eqref{eqn:aubin-nitsche} we have
\begin{align*}
|2( b(u) - b(u_h), u_h-u_H)|
  &\leqs 2\sup_{\chi\in [u_-, u_+]}\|\kappa^2\cosh(\chi)\|_{\infty} \|u-u_h\|_2 \|u_h-u_H\|_2\\
  &\leqs \delta \|u-u_h\|_2^2
      + \frac{K^2}{\delta}\|u_h-u_H\|_2^2
\\
  &\leqs \delta C(\epsilon)\sigma_h^2 \tbar u-u_h \tbar^2
      + \frac{K^2}{\delta m} \tbar u_h-u_H \tbar^2,
\end{align*}
for $\delta > 0$ to be chosen later, and $m$ the coercivity constant.
For $\sigma_h$ sufficient small, we have
$$
(1 - \delta C(\epsilon) \sigma_h^2) \tbar u-u_h \tbar^2
  \leqs \tbar u-u_H \tbar^2
    - \left( 1 - \frac{K^2}{\delta m} \right) \tbar u_h - u_H \tbar^2.
$$
Define now the constant $C^* = C(\epsilon) / \sqrt{m}$ and take
$
\delta = K/(\sqrt{C(\epsilon)} \sqrt{m} \sigma_h).
$
We assume $\sigma_h$ is sufficiently small so that
$
\delta C(\epsilon) \sigma_h^2 = C^* \sigma_h K < 1.
$
This gives~\eqref{eqn:quasi-orthog}
with $\Lambda_h = (1 - C^* \sigma_h K)^{-1}$.
\end{proof}
We note \eqref{eqn:aubin-nitsche} can be established by ``Nitsche trick'' under regularity assumptions (cf. \cite{Mekchay.K2005}).
\subsection{The Main Convergence Result for AFEM}

To establish this result, we will 
follow~\cite{HTZ08a}
and use a combination of the frameworks in \cite{Cascon.J;Kreuzer.C;Nochetto.R;Siebert.K2008,Mekchay.K;Nochetto.R2005}
rather than from \cite{CHX06b}.
This is because these frameworks are the first to handle dependence
of the oscillation on the discrete solution itself.
The quasi-orthogonality result is explicit in \cite{Mekchay.K;Nochetto.R2005}, but somewhat
hidden in \cite{CHX06b}.
The framework in \cite{Cascon.J;Kreuzer.C;Nochetto.R;Siebert.K2008} uses only orthogonality rather than
quasi-orthogonality, but has a number of improvements over \cite{Mekchay.K;Nochetto.R2005}
and \cite{CHX06b} in several respects, including a one-pass algorithm
using only the residual indicator.

The previous sections focused on establishing some supporting results
involving a nesting of three spaces $X_H \subset X_h \subset X$, where
these were abstract spaces in some cases, or specific finite element
subspaces of $H^1$.
In what follows, 
we now consider the asymptotic sequence of finite element spaces 
produced by the AFEM algorithm, and will use the results of the previous
sections with the subscript $h$ in $u_h$ and other quantities replaced by 
an integer $k$ representing the current subspace generated at step $k$
of AFEM.
To simplify the presentation further, we also denote
$$
e_k = \tbar u-u_k \tbar,
\hspace{0.5cm}
E_k = \tbar u_k - u_{k+1} \tbar,
$$
$$
\eta_k = \eta(u_k,\cT_k),
\hspace{0.5cm}
\eta_k(\cM_k) = \eta(u_k,\cM_k),
\hspace{0.5cm}
\eta_0(\mathbf{D}) = \eta_0(\mathbf{D},\cT_0),
$$
where $\mathbf{D}$ represents the set of problem coefficients and nonlinearity.
We also denote $V_k:=V_D(\cT_k)$ for simplicity. 
The supporting results we need have been established 
in~\S\ref{sec:indicator-reduction} and~\S\ref{sec:quasi-orthogonality}.
\begin{theorem}[Contraction]
   \label{T:convergence}
Let $\{\cT_k, V_k, u_k \}_{k\geqs 0}$ be the
sequence of finite element meshes, spaces, and solutions,
respectively, produced by AFEM($\theta$,$\ell$) with
marking parameter $\theta \in (0,1]$ and bisection level $\ell \geqs 1.$
Let $\cT_{k}$ satisfy the conditions in Assumptions~A1 and~A2, and $h_0$ be sufficiently fine so that Lemma~\ref{L:quasi-orthog}
holds for $\{\cT_k, V_k, u_k\}_{k\geqs 0}.$
Then, there exist constants $\gamma > 0$ and $\alpha \in (0,1)$,
depending only on $\theta$, $\ell$, and the shape-regularity of
the initial triangulation $\cT_0$, such that
$$
\tbar u - u_{k+1} \tbar^2 + \gamma \eta_{k+1}^2
   \leqs
\alpha^2 \left( \tbar u - u_{k} \tbar^2 + \gamma \eta_{k}^2 \right).
$$
\end{theorem}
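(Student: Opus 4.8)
The plan is to prove the contraction by the CKNS-type argument of~\cite{Cascon.J;Kreuzer.C;Nochetto.R;Siebert.K2008}, adapted to the nonlinear setting: I combine the nonlinear estimator reduction (Lemma~\ref{L:estimator-reduction}) with the quasi-orthogonality estimate (Lemma~\ref{L:quasi-orthog}), glued together by the a~posteriori upper bound~\eqref{eqn:upper} and the D\"orfler property~\eqref{eqn:dorfler-property}. Throughout I work with the quasi-error $e_k^2+\gamma\eta_k^2$, where $\gamma>0$ is a small parameter to be fixed at the end.

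First I would apply Lemma~\ref{L:quasi-orthog} with $H\to k$ and $h\to k+1$, obtaining $e_{k+1}^2 \leqs \Lambda_{k+1}e_k^2 - E_k^2$; by the hypothesis that $h_0$ is sufficiently fine this holds uniformly along the AFEM sequence, and since the meshes are nested one has $\Lambda_{k+1}\leqs\Lambda_*$ for a fixed constant $\Lambda_*>1$ that can be made as close to $1$ as desired by shrinking $h_0$. Next I would apply Lemma~\ref{L:estimator-reduction} with $v=u_k\in[u_-,u_+]\cap V_k$ (admissible by the discrete $L^\infty$ bound of Theorem~\ref{thm:disc_apriori} together with Lemma~\ref{lm:nonlinear}), $v_*=u_{k+1}$, $\cT=\cT_k$, $\cM=\cM_k$, $\cT_*=\cT_{k+1}$, and insert the D\"orfler inequality $\eta_k^2(\cM_k)\geqs\theta^2\eta_k^2$ to get
\[
\eta_{k+1}^2 \leqs (1+\delta)(1-\lambda\theta^2)\,\eta_k^2 + (1+\delta^{-1})\Lambda_1\eta_0^2(\mathbf D)\,E_k^2 .
\]
Adding $\gamma$ times this to the quasi-orthogonality estimate and requiring $\gamma\leqs\bigl[(1+\delta^{-1})\Lambda_1\eta_0^2(\mathbf D)\bigr]^{-1}$ so that the coefficient of $E_k^2$ is non-positive, I discard the $E_k^2$ term and reach $e_{k+1}^2+\gamma\eta_{k+1}^2 \leqs \Lambda_* e_k^2 + \gamma(1+\delta)(1-\lambda\theta^2)\,\eta_k^2$.

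To close the recursion, I would split $\Lambda_* e_k^2 = (1-\mu)\Lambda_* e_k^2 + \mu\Lambda_* e_k^2$ and bound the second piece by $\mu\Lambda_* C_1\eta_k^2$ via the upper bound $e_k^2\leqs C_1\eta_k^2$ of Lemma~\ref{L:estimator-bounds}. Writing $\xi_0:=(1+\delta)(1-\lambda\theta^2)$, this gives
\[
e_{k+1}^2+\gamma\eta_{k+1}^2 \leqs (1-\mu)\Lambda_* e_k^2 + \bigl[\gamma\xi_0 + \mu\Lambda_* C_1\bigr]\eta_k^2 ,
\]
so the target bound $e_{k+1}^2+\gamma\eta_{k+1}^2\leqs\alpha^2(e_k^2+\gamma\eta_k^2)$ holds provided $(1-\mu)\Lambda_*\leqs\alpha^2$ and $\mu\Lambda_* C_1\leqs\gamma(\alpha^2-\xi_0)$. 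I would fix the parameters in order: (i) $\delta$ small enough that $\xi_0<1$, which is possible since $\lambda=1-2^{-\ell/d}\in(0,1)$ and $\theta\in(0,1]$; (ii) $\alpha\in(0,1)$ with $\alpha^2$ close enough to $1$ and $>\xi_0$; (iii) $h_0$ fine enough that $\Lambda_*$ is so near $1$ that $\mu:=1-\alpha^2\Lambda_*^{-1}$ satisfies $\mu\Lambda_* C_1/(\alpha^2-\xi_0)\leqs\bigl((1+\delta^{-1})\Lambda_1\eta_0^2(\mathbf D)\bigr)^{-1}$; and (iv) $\gamma$ anywhere in the resulting nonempty interval $\bigl[\mu\Lambda_* C_1/(\alpha^2-\xi_0),\ ((1+\delta^{-1})\Lambda_1\eta_0^2(\mathbf D))^{-1}\bigr]$. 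With these choices both required inequalities hold, giving the asserted contraction with constants depending only on $\theta$, $\ell$, and the shape-regularity of $\cT_0$.

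The step I expect to be the main obstacle is controlling the genuinely mesh-dependent constant $\Lambda_{k+1}>1$ coming from quasi-orthogonality: in the linear case of~\cite{Cascon.J;Kreuzer.C;Nochetto.R;Siebert.K2008}, Galerkin orthogonality gives $\Lambda\equiv1$ and no $\Lambda_* e_k^2$ term at all, whereas here one must simultaneously absorb a true $\Lambda_* e_k^2$ contribution through the a~posteriori upper bound---which unavoidably injects the loss $\mu\Lambda_* C_1$ into the estimator coefficient---and keep $(1-\mu)\Lambda_*<1$, which is only possible because $\Lambda_*\to1$ as $h_0\to0$; this is precisely why $h_0$ must be taken sufficiently fine. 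Threading a nonempty admissible interval for $\gamma$ between the lower bound forced by the estimator inequality and the upper bound forced by the need to neutralize $E_k^2$ is the delicate bookkeeping; the remaining steps are a routine assembly of the lemmas already established.
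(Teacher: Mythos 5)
Your proof is correct and follows essentially the same route as the paper: quasi-orthogonality, the nonlinear estimator reduction, the D\"orfler property, and the global upper bound, assembled exactly as in the CKNS framework, with $h_0$ fine enough that $\Lambda_*$ is near $1$. The only (cosmetic) difference is that you apply the upper bound $e_k^2\leqs C_1\eta_k^2$ to push the surplus $\mu\Lambda_* e_k^2$ into the estimator term, whereas the paper splits the negative marked-estimator term with a parameter $\beta$ and converts part of it into a negative $e_k^2$ contribution---both are standard, equivalent ways of closing the same recursion.
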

\begin{proof}
We combine the frameworks in \cite{Cascon.J;Kreuzer.C;Nochetto.R;Siebert.K2008,Mekchay.K;Nochetto.R2005}
using the quasi-orthogonality result in Lemma~\ref{L:quasi-orthog}
rather than the approach in \cite{CHX06b} for nonlinearities.
Our notation follows closely \cite{Cascon.J;Kreuzer.C;Nochetto.R;Siebert.K2008}.
The proof requires the following tools:
\begin{enumerateX}
\item D\"orfler marking property given in equation~\eqref{eqn:dorfler-property}.
\item The global upper-bound in Lemma~\ref{L:estimator-bounds}.
\item Estimator reduction Lemma~\ref{L:estimator-reduction}.
\item Quasi-orthogonality Lemma~\ref{L:quasi-orthog}.
\end{enumerateX}
In addition, some results above used indicator monotonicity
properties~\eqref{eqn:eta-monotonicity}--\eqref{eqn:osc-monotonicity}
and monotonicity of data $\eta_k(\bD).$
Starting with the quasi-orthogonality result in Lemma~\ref{L:quasi-orthog}
we have
$$
e_{k+1}^2 \leqs \Lambda_{k+1} e_k^2 - E_k^2,
$$
which gives
$$
e_{k+1}^2 + \gamma \eta_{k+1}^2
\leqs \Lambda_{k+1} e_k^2 - E_k^2 + \gamma \eta_{k+1}^2.
$$
Employing now Lemma~\ref{L:estimator-reduction} for some $\delta>0$
to be specified later we have
$$
e_{k+1}^2 + \gamma \eta_{k+1}^2
\leqs \Lambda_{k+1} e_k^2 - E_k^2
  + (1+\delta) \gamma [\eta_k^2 - \lambda \eta_k^2(\cM_k)]
  + (1+\delta^{-1}) \gamma \Lambda_1 \eta_0^2(\bD) E_k^2,
$$
where $\lambda\in (0,1)$ as defined in Lemma~\ref{L:estimator-reduction}.
Take now $\delta > 0$ sufficiently small so that we can ensure
$\gamma < 1$ by setting:
\begin{equation}
  \label{eqn:gamma}
0 < \gamma = \gamma(\delta) = \frac{\delta}{(1+\delta)\Lambda_1 \eta_0^2(\bD)}
       < 1.
\end{equation}
Using \eqref{eqn:gamma} in the last term leads to
$$
e_{k+1}^2 + \gamma \eta_{k+1}^2
\leqs \Lambda_{k+1} e_k^2
  + (1+\delta) \gamma \eta_k^2
  - (1+\delta) \lambda \gamma \eta_k^2(\cM_k).
$$
We now use the marking strategy in equation~\eqref{eqn:dorfler-property} 
to give
\begin{equation}
   \label{eqn:eta}
e_{k+1}^2 + \gamma \eta_{k+1}^2
\leqs \Lambda_{k+1} e_k^2
  + (1+\delta) \gamma \eta_k^2
  - (1+\delta) \lambda \gamma \theta^2 \eta_k^2.
\end{equation}
To allow for simultaneous reduction of the error and indicator,
we follow \cite{Cascon.J;Kreuzer.C;Nochetto.R;Siebert.K2008} and split the last term into
two parts using an arbitrary $\beta \in (0,1)$:
$$
e_{k+1}^2 + \gamma \eta_{k+1}^2
\leqs \Lambda_{k+1} e_k^2 + (1+\delta) \gamma \eta_k^2
  - \beta (1+\delta) \lambda \gamma \theta^2 \eta_k^2
  - (1-\beta) (1+\delta) \lambda \gamma \theta^2 \eta_k^2.
$$
We now the first and third terms using the upper bound from
Lemma~\ref{eqn:upper} and the expression for $\gamma$
in~\eqref{eqn:gamma}, and combine the second and fourth terms as well, giving:
$$
e_{k+1}^2 + \gamma \eta_{k+1}^2
\leqs
\left( \Lambda_{k+1} - \frac{\beta \delta \lambda \theta^2}
                        {C_1 \Lambda_1 \eta_0^2(\bD)} \right)
   e_k^2
+ (1+\delta)(1-(1-\beta)\lambda \theta^2)
   \eta_k^2.
$$
This can be written in the form
$$
e_{k+1}^2 + \gamma \eta_{k+1}^2
\leqs
\alpha_1^2(\delta,\beta) e_k^2
+ \gamma \alpha_2^2(\delta,\beta) \eta_k^2,
$$
where
$$
\alpha_1^2(\delta,\beta) =
\Lambda_h - \delta \left[\frac{\beta \lambda \theta^2}
                        {C_1 \Lambda_1 \eta_0^2(\bD)} \right],
\hspace*{0.5cm}
\alpha_2^2(\delta,\beta) =
(1+\delta)(1-(1-\beta)\lambda \theta^2).
$$
By Lemma~\ref{L:quasi-orthog}, we have
$\Lambda_{k+1} = (1 - C^* \sigma_{k+1} K)^{-1}$ with $\sigma_{k+1}:=\sigma_{h_{k+1}},$
so that
$$
\alpha_1^2(\delta,\beta) = \frac{1}{1 - C^* \sigma_{k+1} K}
- \delta \left[\frac{\beta \lambda \theta^2}{C_1 \Lambda_1 \eta_0^2(\bD)}\right].
$$
By the assumptions in Lemma~\ref{L:quasi-orthog},
we can take the initial mesh so that $\sigma_{k+1} \geqs 0$ is as small as
we desire, or that $\Lambda_{k+1}$ is as close to one as we desire.
Therefore, we can simultaneously pick $\sigma_{k+1} > 0$ and $\delta > 0$
sufficiently small so that $\alpha_1^2 < 1$.
Either this choice of $\delta > 0$ ensures $\alpha_2^2 < 1$ as well,
or we further reduce $\delta$ so that:
$$
\alpha^2 = \max\{\alpha_1^2,\alpha_2^2\} < 1.
$$
This completes the proof.
\end{proof}

\section{Feature-Preserving Mesh Generation for Biomolecules} \label{sec:mesh}

Mesh generation from a molecule is one of the important components
in finite element modeling of a biomolecular system. There are two
primary ways of constructing molecular surfaces: one is based on
the `hard sphere' model~\cite{Rich77} and the other is based on
the level set of a `soft' Gaussian function~\cite{GrPi95}. In
the first model, a molecule is treated as a collection of `hard'
spheres with different radii, from which three types of surfaces
can be extracted: {\it van der Waals surface}, {\it solvent
accessible surface}, and {\it solvent excluded surface}
\cite{Conn83,GrPi95,LeRi71,Rich77}. The molecular surface can be
represented analytically by a list of seamless spherical patches
\cite{Conn83,ToAb96} and triangular meshes can be generated using
such tools as {\it MSMS}~\cite{SOS96}. In contrast, the `soft'
model treats each atom as a Gaussian-like smoothly decaying scalar
function in $\mathbb{R}^3$~\cite{Blin82,DuOl93,GrPi95}. The molecular
surfaces are then approximated by appropriate level sets (or
iso-surfaces) of the total of the Gaussian functions~\cite{Blin82,DuOl93}.
Because of its generality, robustness, and capability of producing
smooth surfaces, we will utilize the `soft' model (or level set
method) in our molecular mesh generation.

We now briefly outline the algorithms of constructing
triangular and tetrahedral meshes from a molecule that is given by
a list of centers and radii for atoms (e.g., PQR files
\cite{DNMB04} or PDB files with radii defined by
users \cite{BWFG}). 
More details can be found in our earlier work \cite{YuHCM08}. 
Figure~\ref{MeshgenPipeline} shows the pipeline of our mesh
generation toolchain. Note that our tool can also
take as input an arbitrary 3D scalar volume or a triangulated
surface mesh that has very low quality.

\begin{figure}[!ht]
\begin{center}
   \includegraphics[width=0.75\textwidth]{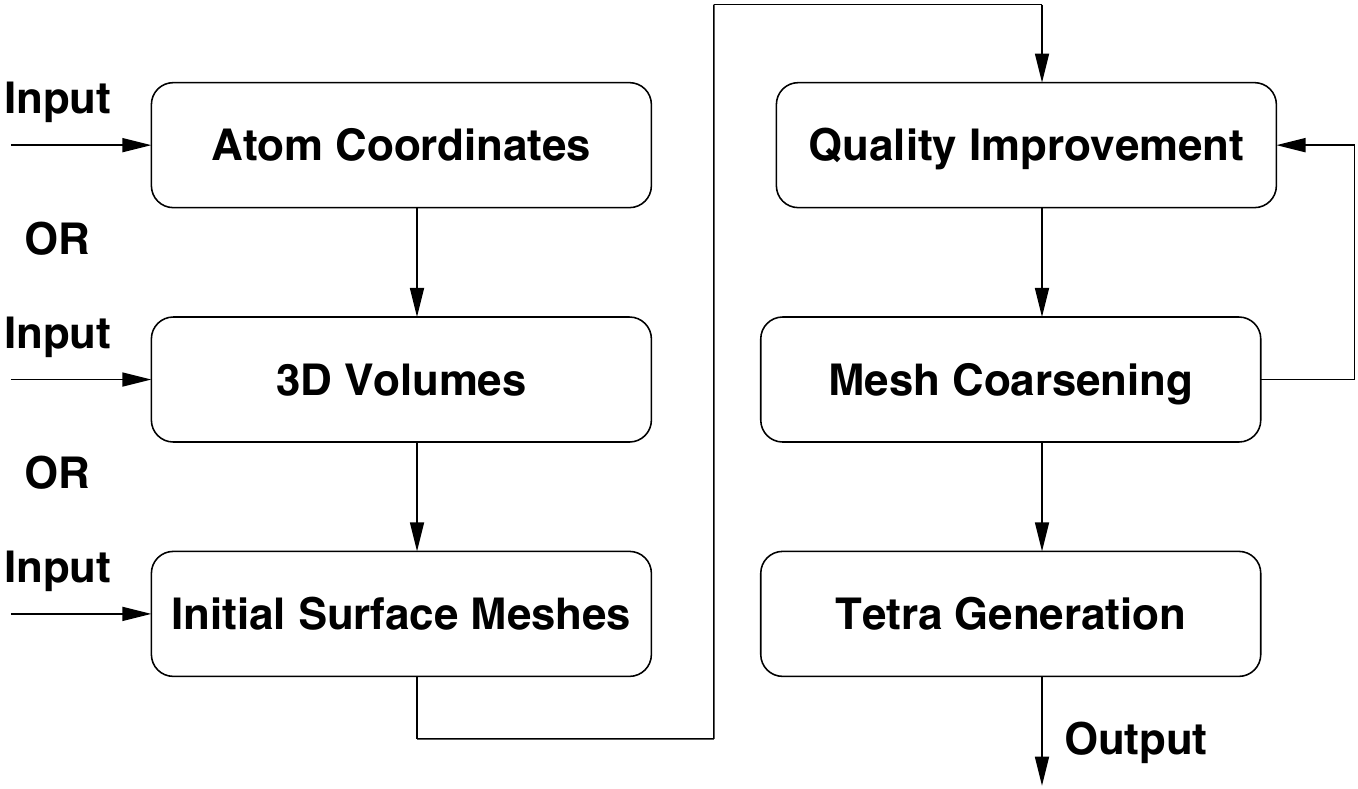}
\end{center}
\caption{Illustration of our mesh generation toolchain. The inputs
can be a list of atoms (with centers and radii), a 3D scalar
volume, or a user-defined surface mesh. The latter two can be
thought of as subroutines of the first one.}
\label{MeshgenPipeline}
\end{figure}

\subsection{Molecular Surface Generation}\label{subsec:generation}
In our mesh generation toolchain, a molecular surface mesh is
defined by a level set of the Gaussian kernel function computed
from a list of atoms (represented by centers ${\bf c}_i$ and radii
$r_i$) in a molecule as follows \cite{Blin82,GrPi95,ZXB06}:

\begin{equation}
F(\bf x) = \sum_{i=1}^{N}e^{B_i(\frac{\|{\bf x}-{\bf
c}_i\|^2}{r_i^2}-1)}, \label{eq:gaussian-blur1}
\end{equation}
where the negative parameter $B_i$ is called the {\it blobbyness}
that controls the spread of characteristic function of each atom.
The blobbyness is treated in our work as a constant parameter
(denoted by $B_0$) for all atoms. Our experiments on a number of
molecules show that the blobbyness at $-0.5$ produces a good
approximation for molecular simulations.

Given the volumetric function $F(\bf x)$, the surface (triangular)
mesh is constructed using the marching cube method \cite{LoCl87}.
Figure~\ref{MeshgenExample}(A) shows an example of the isosurface
extracted using this method. From this example, we can see that:
(a) the isosurfacing technique can extract very smooth surfaces,
but (b) many triangles are extremely ``sharp'', which can cause
poor approximation quality in finite element analysis. In
addition, meshes generated by isosurfacing techniques are often
too dense. Therefore, improving mesh quality yet keeping the
number of mesh elements small are two important 
issues that we will address in this section.

\begin{figure}[!ht]
\begin{center}
   \includegraphics[width=0.98\textwidth]{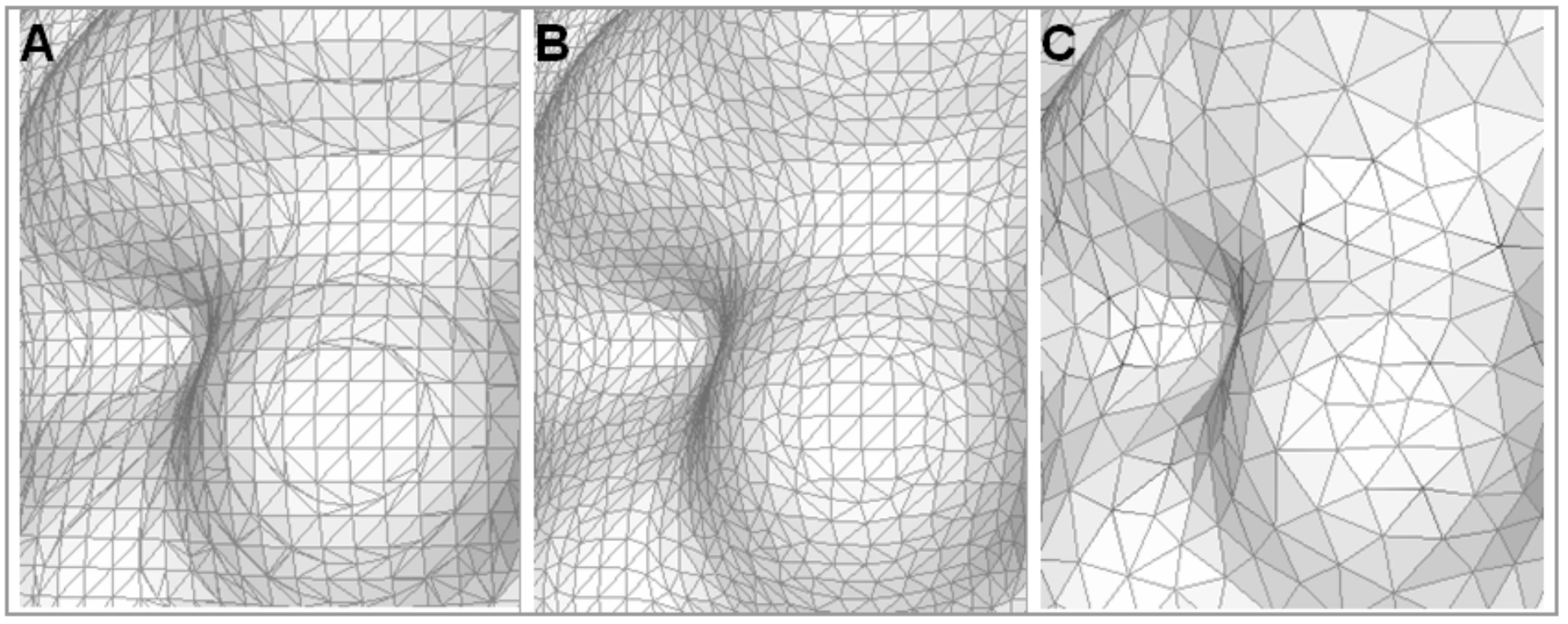}
\end{center}
\caption{Illustration of the surface generation and
post-processing. (A) A 3D volume is first generated using the
Gaussian blurring approach (equation \eqref{eq:gaussian-blur1})
from the molecule (PDB: 1CID). Shown here is part of the surface
triangulation by the marching cube method. (B) The surface mesh
after two iterations of mesh quality improvements. (C) After
coarsening, the mesh size becomes about seven times smaller than
the original one. The mesh is also smoothed by the normal-based
technique.} \label{MeshgenExample}
\end{figure}

\subsection{Surface Mesh Improvement and Decimation}\label{subsec:processing}
Surface mesh post-processing includes quality improvement and mesh
coarsening (decimation). The mesh quality can be improved by a combination 
of three major techniques: inserting or deleting vertices, swapping
edges or faces, and moving the vertices without changing the mesh
topology \cite{Freitag.L;Ollivier-Gooch.C1997}. The last one is the main strategy we use
to improve the mesh quality in our toolchain. For a surface mesh,
however, moving the vertices may change the shape of the surface.
Therefore, when we move the vertices, important features (e.g.,
sharp boundaries, concavities, holes, etc.) on the original
surface should be preserved as much as possible. To characterize
the important features on the surface mesh, we compute so-called
{\em local structure tensor} \cite{FeLi03,Weic98,Yu.Z;Bajaj.C2008} as
follows:
\begin{equation}
T({\bf v}) = \sum_{i=1}^{M'}\left(
\begin{array}{ccc}
    {n}_x^{(i)}{n}_x^{(i)} & {n}_x^{(i)}{n}_y^{(i)} & {n}_x^{(i)}{n}_z^{(i)} \\ \\
    {n}_y^{(i)}{n}_x^{(i)} & {n}_y^{(i)}{n}_y^{(i)} & {n}_y^{(i)}{n}_z^{(i)}  \\ \\
    {n}_z^{(i)}{n}_x^{(i)} & {n}_z^{(i)}{n}_y^{(i)} & {n}_z^{(i)}{n}_z^{(i)}
\end{array} \right),
\label{eq:struc_tensor}
\end{equation}
where $({n}_x^{(i)},{n}_y^{(i)},{n}_z^{(i)})$ is the \emph{normal vector}
of the $i^{th}$ neighbor of a vertex {\bf v} and $M'$ is the total
number of neighbors. The normal vector of a vertex is defined by
the weighted average of the normals of all its incident triangles.
The local structure tensor basically captures the principal axes
of a set of vectors in space. Let the eigenvalues of $T({\bf v})$
be $\lambda_1, \lambda_2, \lambda_3$ and $\lambda_1 \geq \lambda_2
\geq \lambda_3$. Then the local structure tensor can capture the
following features: (a) Spheres and saddles: $\lambda_1 \approx
\lambda_2 \approx \lambda_3
> 0$; (b) Ridges and valleys: $\lambda_1 \approx \lambda_2 \gg \lambda_3
\approx 0$; (c) Planes: $\lambda_1 \gg \lambda_2 \approx \lambda_3
\approx 0$.

The quality of a mesh can be improved by maximizing the minimal
angles. The angle-based method developed by Zhou and coauthors
\cite{ZhSh00} utilizes this idea by moving a vertex (denoted by
{\bf{x}}) towards the bisectors of the angles formed by adjacent
vertices on the surrounding polygon. This method works quite well
for 2D planar meshes and has been extended in \cite{XuNe2005} for
improving quadrilateral mesh quality as well. However, vertices on
a surface mesh can move with three degrees of freedom. If only the
angle criterion is considered, the surface mesh may become bumpy
and some molecular features may disappear. In other words, while the mesh quality is being improved, the
geometric features on a surface mesh should be preserved as much
as possible. To this end,
we take advantage of the local structure tensor by mapping the new
position $\bar{\bf{x}}$ generated by the angle-based method to
each of the eigenvectors of the tensor calculated at the original
position $\bf{x}$ and scaling the mapped vectors with the
corresponding eigenvalues. Let $\bf{e}_1, \bf{e}_2, \bf{e}_3$
denote the eigenvectors and $\lambda_1, \lambda_2, \lambda_3$ be
the corresponding eigenvalues of the local structure tensor valued
at $\bf{x}$. The modified vertex $\hat{\bf{x}}$ is calculated as
follows:

\begin{equation}
\hat{\bf{x}} = {\bf{x}} +
\sum_{k=1}^3\frac{1}{1+\lambda_k}((\bar{\bf{x}}-{\bf{x}})\cdot
{\bf{e}}_k) {\bf{e}}_k. \label{eq:angle-mapping}
\end{equation}
The use of eigenvalues as a weighted term in the above equation is
essential to preserve the features (with high curvatures) and to
keep the improved surface mesh as close as possible to the
original mesh by encouraging the vertices to move along the
eigen-direction with small eigenvalues (or in other words, with
low curvatures). Figure~\ref{MeshgenExample}(B) shows the surface
mesh after quality improvement, compared to the original mesh as
shown in Figure~\ref{MeshgenExample}(A). Before quality improvement,
the minimal and maximal angles are $0.02^{\circ}$ and
$179.10^{\circ}$ respectively. These angles become $14.11^{\circ}$
and $135.65^{\circ}$ after the improvement (two iterations).

The surface meshes extracted by isocontouring techniques (e.g.,
the marching cube method) often contain a large number of elements
and are nearly uniform everywhere. To reduce the computational
cost, adaptive meshes are usually preferred where fine meshes only
occur in regions of interest. The idea of mesh coarsening in our pipeline is
straightforward $-$ delete a node and its associated edges, and
then re-triangulate the surrounding polygon. The local structure
tensor is again used as a way to quantify the features. Let ${\bf
x}$ denote the node being considered for deletion and the
neighboring nodes be ${\bf v}_i, i = 1, \cdots, M$, where $M$ is
the total number of the neighbors. The maximal length of the
incident edges at ${\bf x}$ is denoted by $L({\bf x}) =
\max_{i=1}^M\{d({\bf x}, {\bf v}_i)\}$ where $d(\cdot, \cdot)$ is the
Euclidean distance. Apparently $L({\bf x})$ indicates the
sparseness of the mesh at ${\bf x}$. Let $\lambda_1({\bf x}),
\lambda_2({\bf x}), \lambda_3({\bf x})$ be the eigenvalues of the
local structure tensor calculated at ${\bf x}$, satisfying
$\lambda_1({\bf x}) \geq \lambda_2({\bf x}) \geq \lambda_3({\bf
x})$. Then the node ${\bf x}$ is deleted if and only if the
following condition holds:

\begin{equation}
L({\bf x})^\alpha\left(\frac{\lambda_2({\bf x})}{\lambda_1({\bf
x})}\right)^\beta < T_0, \label{eq:mesh-coarsening}
\end{equation}
where $\alpha$ and $\beta$ are chosen to balance between the
sparseness and the curvature of the mesh. In our experiments, they
both are set as $1.0$ by default. The threshold $T_0$ is
user-defined and also dependent on the values of $\alpha$ and
$\beta.$ When $\alpha$ and $\beta$ are fixed, larger $T_0$
will cause more nodes to be deleted. For the example in
Figure~\ref{MeshgenExample}(C), the coarsened mesh consists of
$8,846$ nodes and $17,688$ triangles, about seven times smaller
than the mesh as shown in Figure~\ref{MeshgenExample}(B).

Mesh coarsening can greatly reduce the mesh size to a
user-specified order. However, the nodes on the ``holes'' are
often not co-planar; hence the re-triangulation of the ``holes''
often results in a bumpy surface mesh. The bumpiness can be
reduced or removed by smoothing the surface meshes. We employ the
idea of anisotropic vector diffusion \cite{PeMa90,YuBa04} and
apply it to the normal vectors of the surface mesh being
considered. This normal-based approach turns out to preserve sharp
features and prevent volume shrinkages \cite{ChCh05} better than
the traditional vertex-based approach. Figure~\ref{MeshgenExample}(C)
shows the result after the mesh coarsening and normal-based mesh
smoothing.

\subsection{Tetrahedral Mesh Generation}\label{subsec:tetgen}
Once the surface triangulation is generated with good quality,
{\it Tetgen} \cite{Si04,SiGa05} can produce tetrahedral meshes
with user-controlled quality. Besides the triangulated surface,
our toolchain will have three other outputs for a given molecule:
the interior tetrahedral mesh, the exterior tetrahedral mesh, and
both meshes together. For the interior tetrahedral mesh, we force
all atoms to be on the mesh nodes. The exterior tetrahedral mesh
is generated between the surface mesh and a bounding sphere whose
radius is set as about $40$ times larger than the size of the
molecule being considered. Figure~\ref{MeshgenTest} demonstrates an
example of mesh generation on the mouse Acetylcholinesterase
(mAChE) monomer.

\begin{figure}[!ht]
\begin{center}
   \includegraphics[width=0.195\textwidth]{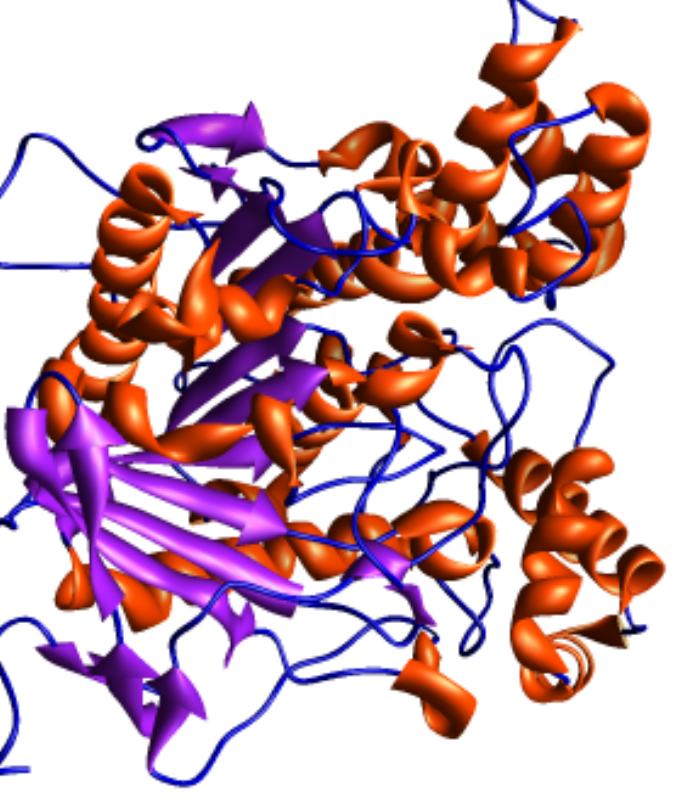}
   \includegraphics[width=0.260\textwidth]{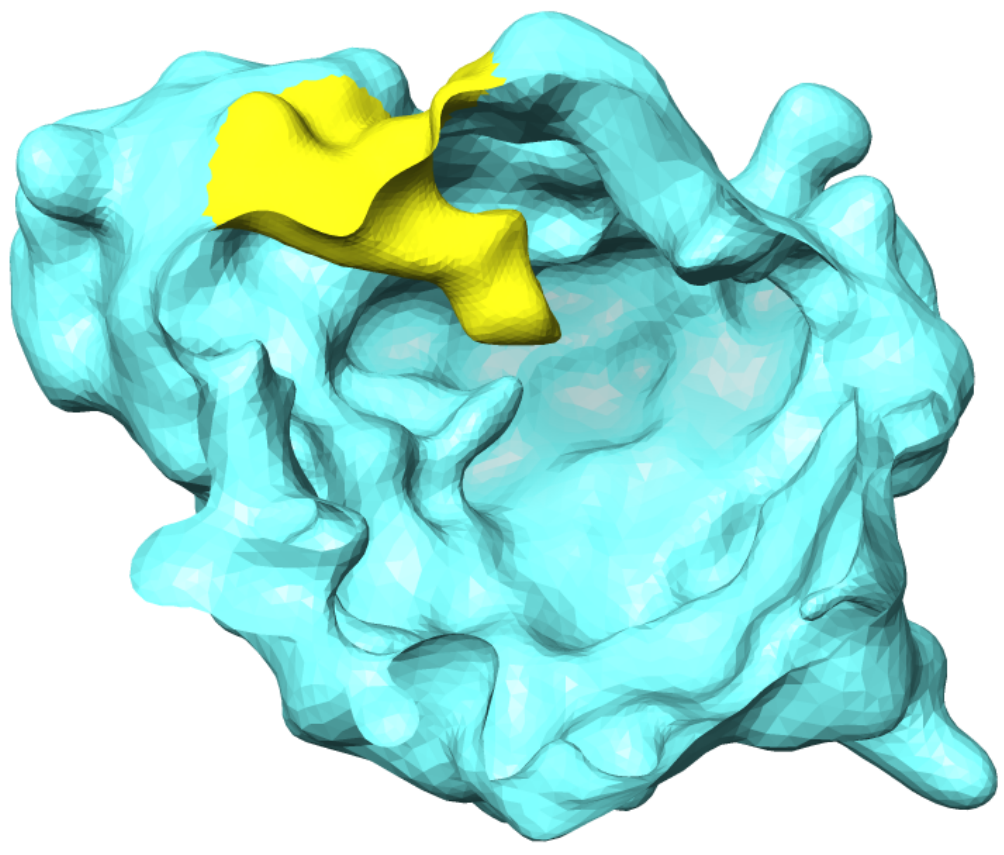}
   \includegraphics[width=0.260\textwidth]{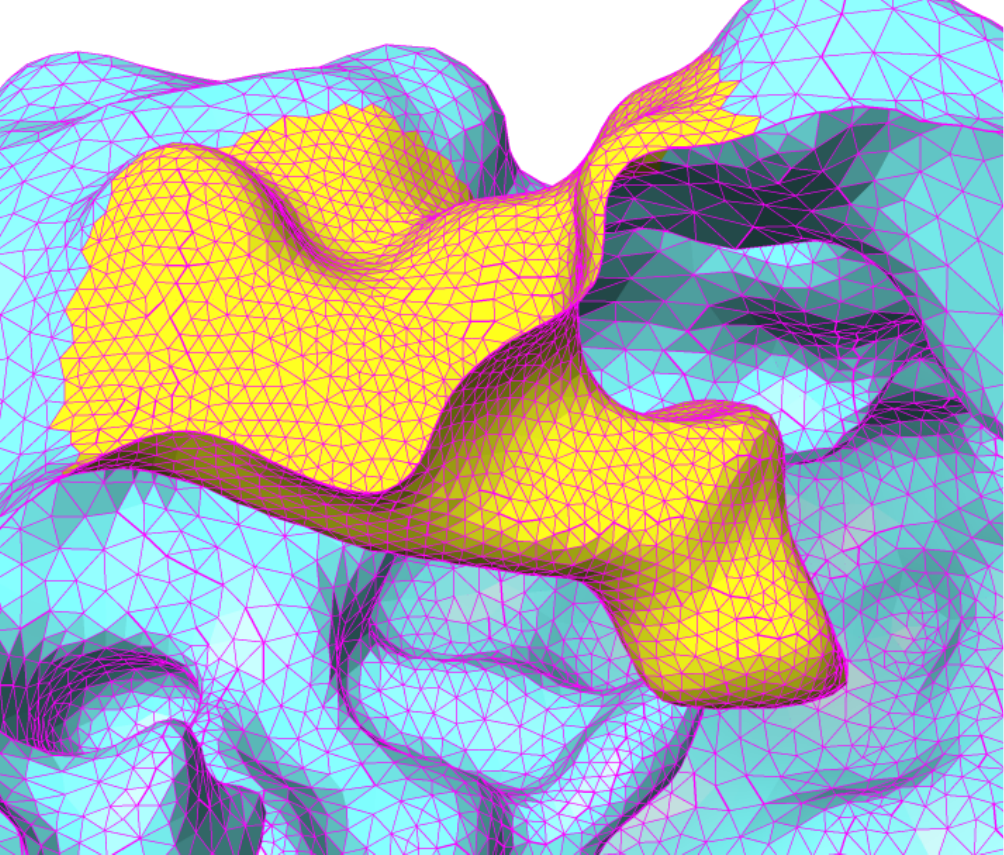}
   \includegraphics[width=0.260\textwidth]{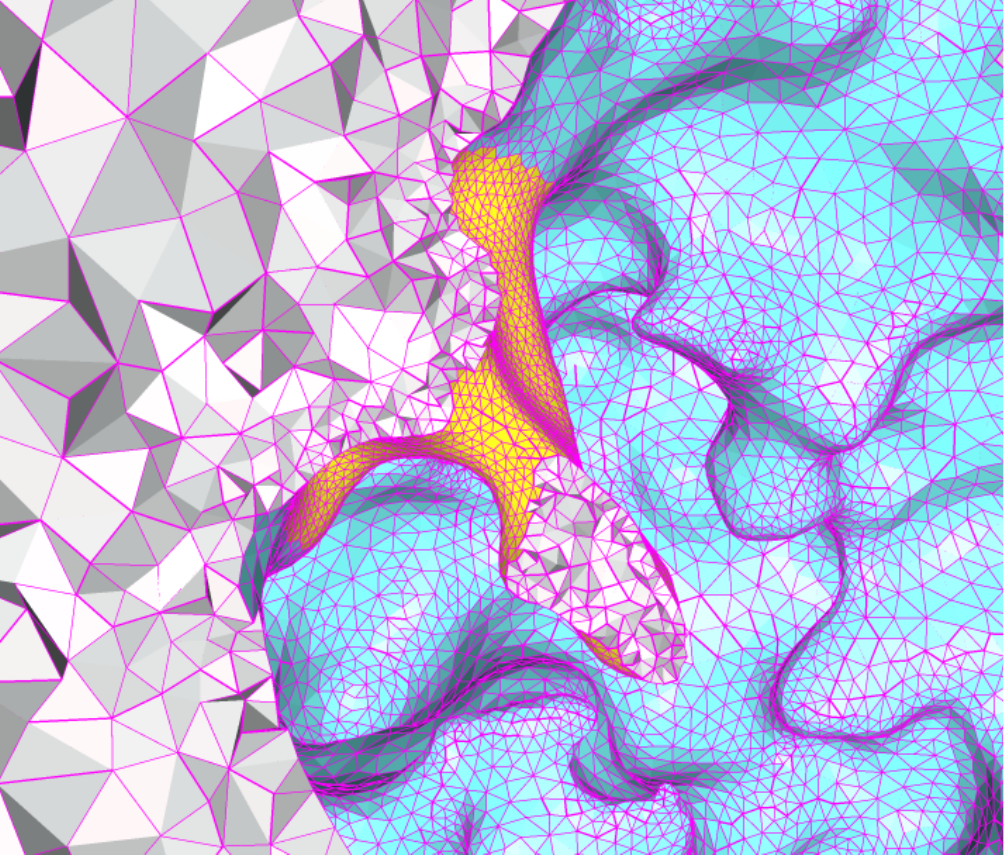} \\
\hspace*{-0.3cm} (A)
\hspace*{2.8cm} (B)
\hspace*{3.2cm} (C)
\hspace*{3.2cm} (D)
\end{center}
\caption{Illustration of biomolecular mesh generation. (A) The PDB
structure of the mouse Acetylcholinesterase (mAChE) monomer. (B)
The surface mesh generated by our approach. The active site is
highlighted in yellow. (C) A closer look at the mesh near the active
site. (D) The tetrahedral mesh between the molecular surface and
the bounding sphere (not shown).} \label{MeshgenTest}
\end{figure}

\section{Numerical Examples} \label{sec:numerical}

Two numerical examples with increasingly complexity of molecular surface are presented to show the stability 
of the decomposition scheme and the convergence of the adaptive algorithm. In both examples,
the Laplace equation for harmonic component is solved with finite element method. The gradient of
the harmonic component is then computed and supplied for calculating the interface conditions of the regularized
Poisson-Boltzmann equation. It is also possible to directly compute the harmonic component and its gradient from 
the solution representation for the Poisson equation of the harmonic component via surface integrals on the
molecular surface.

{\bf Example 1.} The first numerical example is devoted to the comparison of two decomposition schemes discussed in the
subsection \ref{sec:pbe}. We use the model problem in Example \ref{eg:born_ion_1} because it admits an analytical
solution for comparison. The computational domain is chosen to be a sphere with radius $r=5$\AA. 
Figure \ref{fig:born_ion_1} plots the computed regular potential component and the
full potential from the first decomposition scheme as well as their relative errors with respect to the analytical
solutions, respectively. 
Chart B shows that the finite element solution of this regular component has
an relative error below 3\% over the entire domain. Because of the large magnitude of this regular potential, the
absolute error is considerably large, see Chart A and in particular Chart C, 
where the analytical singular component is added to get the full potential. The amplification of the 
relative error as analyzed in section \ref{sec:pbe} is seen from a comparison of 
Chart B and Chart D. This confirms that the first 
decomposition scheme is numerically unstable.
\begin{figure}[!ht]
\begin{center}
   \includegraphics[width=0.45\textwidth]{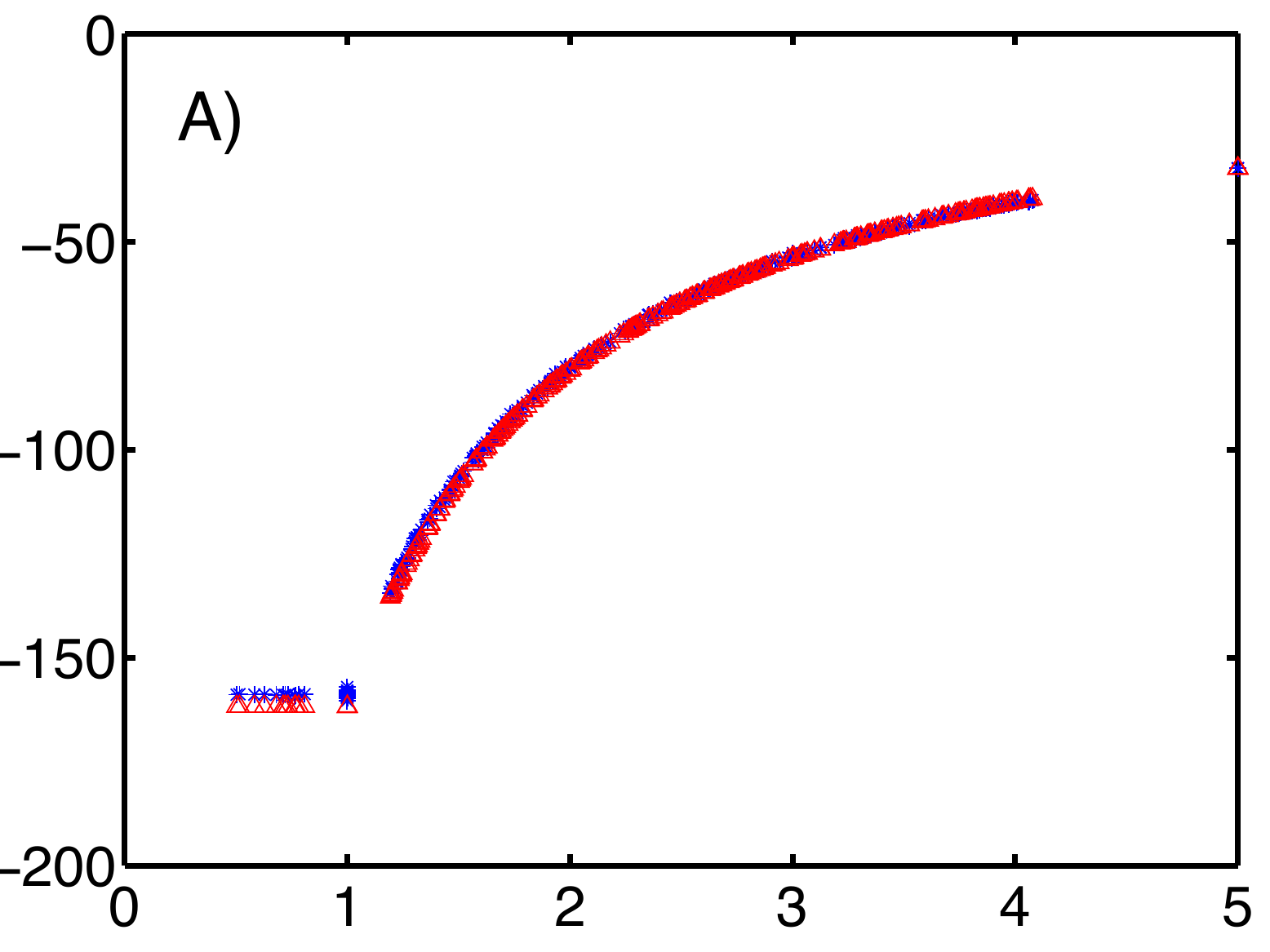}
   \includegraphics[width=0.45\textwidth]{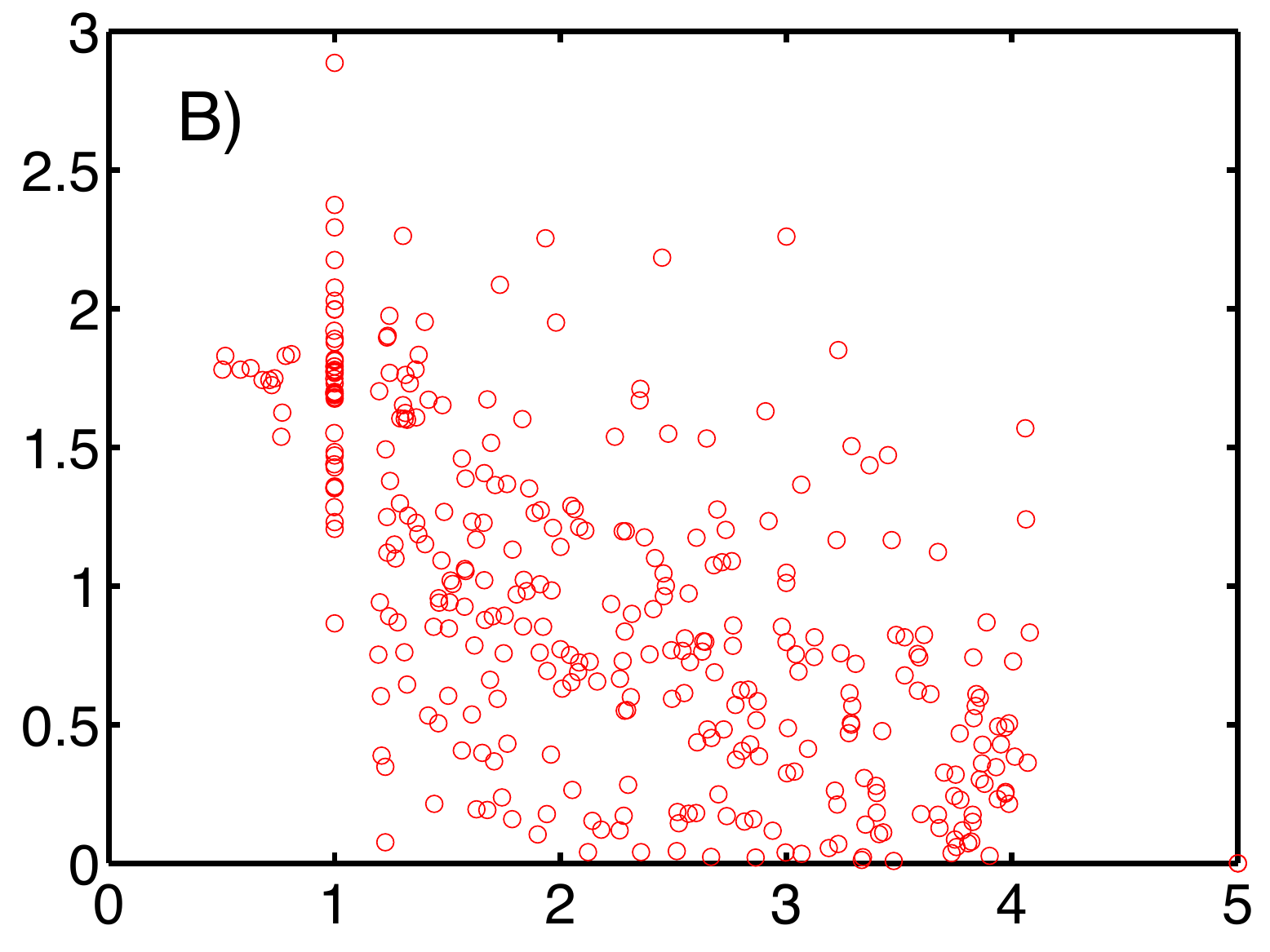} \\
   \includegraphics[width=0.45\textwidth]{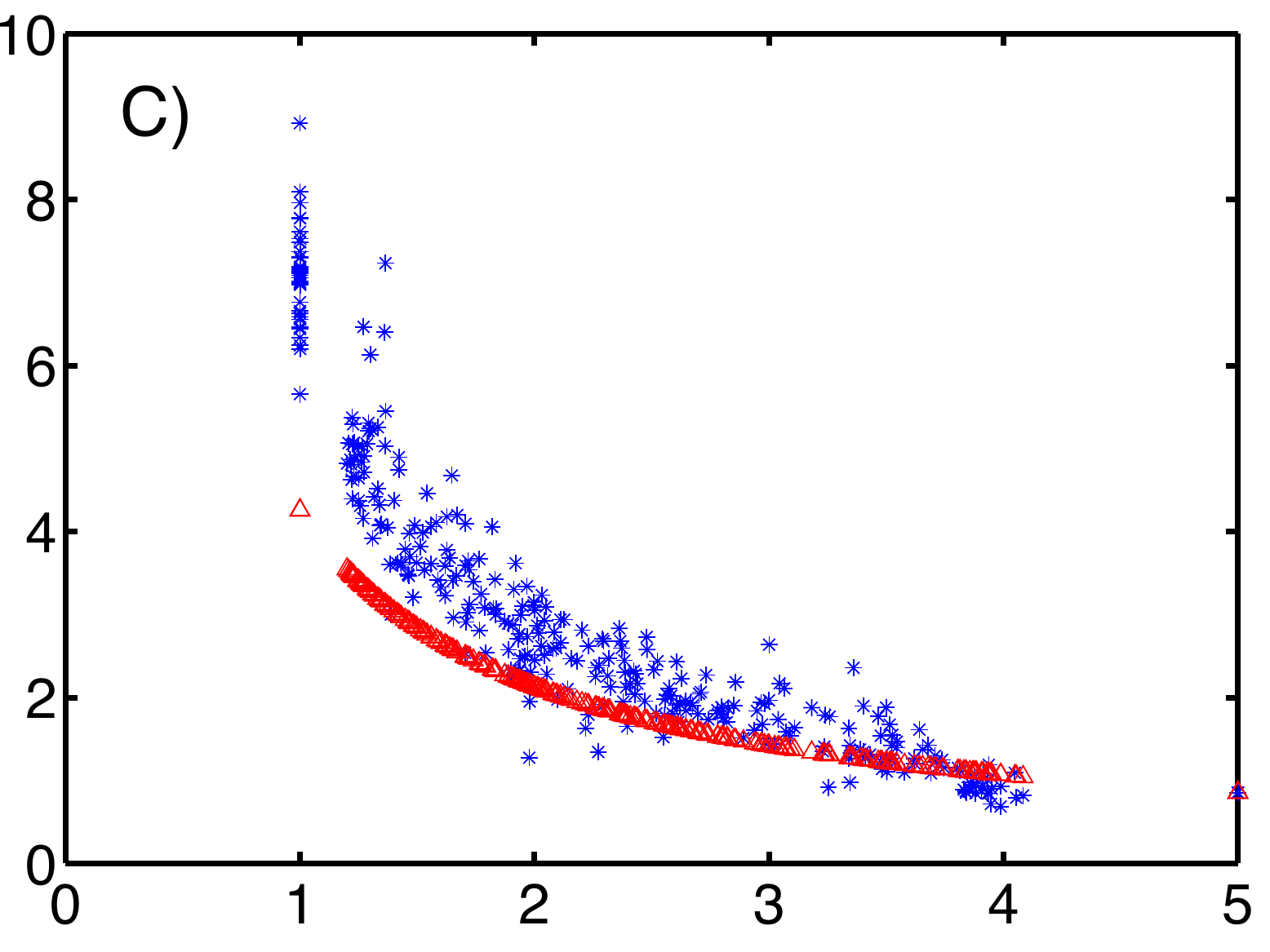}
   \includegraphics[width=0.45\textwidth]{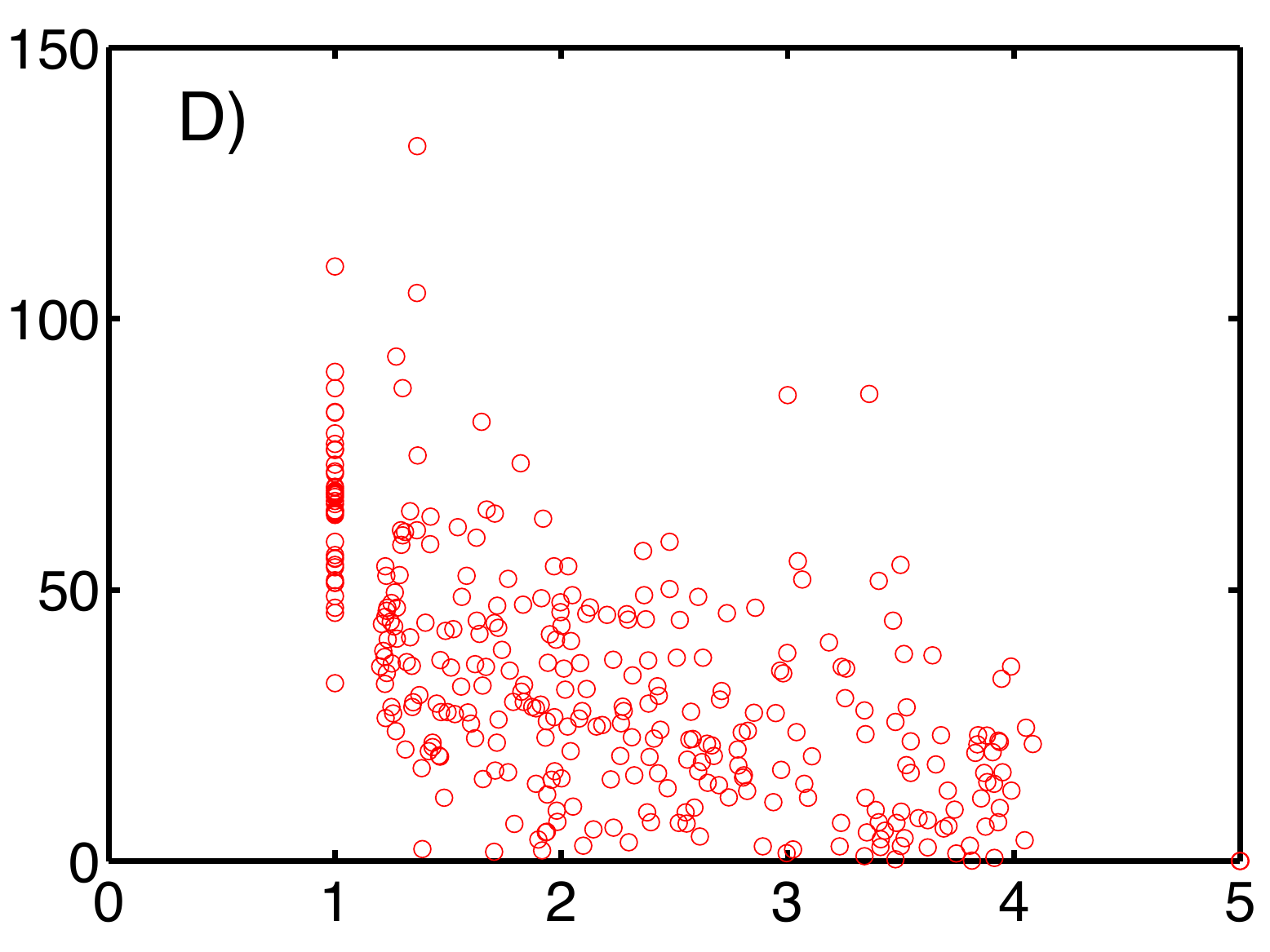} \\
\end{center}
\caption{
Solution of the Poisson-Boltzmann equation via the first decomposition scheme.
(A): Computed regular component $u^r$ of the electrostatic potential (blue) versus the analytical solution (red).
(B): Relative error in percentage of computed regular component $u^r$ of electrostatic potential.
(C): Computed full electrostatic electrostatic potential (blue) versus the analytical solution (red).
(D): Relative error in percentage of the computed full electrostatic potential.
}
\label{fig:born_ion_1}
\end{figure}

The numerical solutions via the second decomposition scheme demonstrate the desirable numerical
stability, as shown in Figure~\ref{fig:born_ion_2}. The regular potential $u^r$ in Chart A is solved
with the same mesh for Figure~\ref{fig:born_ion_1}, and shows a very
good agreement with the analytical solution. The relative error is well below 1.5\% over the entire
domain, and is well below 0.1\% in the interior of the molecule. Compared to Figure~\ref{fig:born_ion_1},
it is seen that the magnitude of the regular component of the stable decomposition is much 
smaller. Because the harmonic and regular components are both solved numerically, it is worthwhile to
examine the summation of these two numerical solutions and compare the total with the exact solution;
this is plotted in Chat B. The discontinuity indicates that the decomposition is only applied 
inside the biomolecule, and that the harmonic component is much larger than the regular 
component. This further suggests that the overall relative numerical error inside the biomolecule 
will be larger than that in the solvent region. Interesting enough, most intermolecular electrostatic
interactions are occurred through the solvent, and thus the stable decomposition can still provide
the electrostatic potential of high fidelity for describing these interactions.
We then refine this 
mesh globally by bisecting all the edges; the relative error is reduced 0.5\% in most of the domain except in the 
vicinity of the dielectric interface where the error does not show a noticeable decrease, see Chart D. 
This is because the middle point of an edge on the interface maybe not located on the interface and
therefore violates the assumption on the discretization of the molecular surface, thus 
the approximations to the interface and to the interface conditions are not improved
with this globally refinement. To satisfy this assumption we apply this global refinement first
and then move the middles points of all the interface edges back to the interface. This new 
refinement approach successfully scales down the numerical error near the interface, see Chart E.
\begin{figure}[!ht]
\begin{center}
   \includegraphics[width=0.45\textwidth]{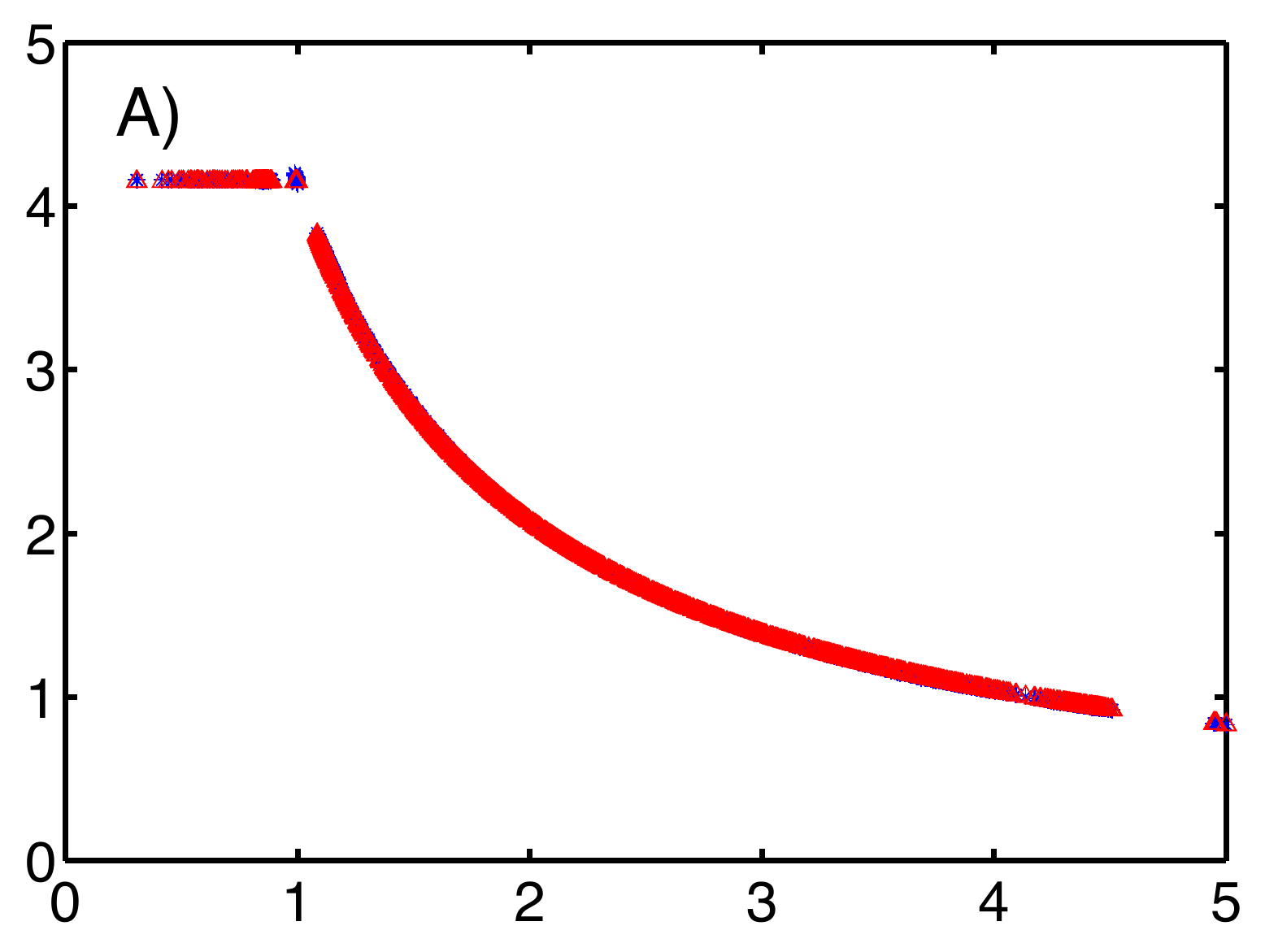}
   \includegraphics[width=0.45\textwidth]{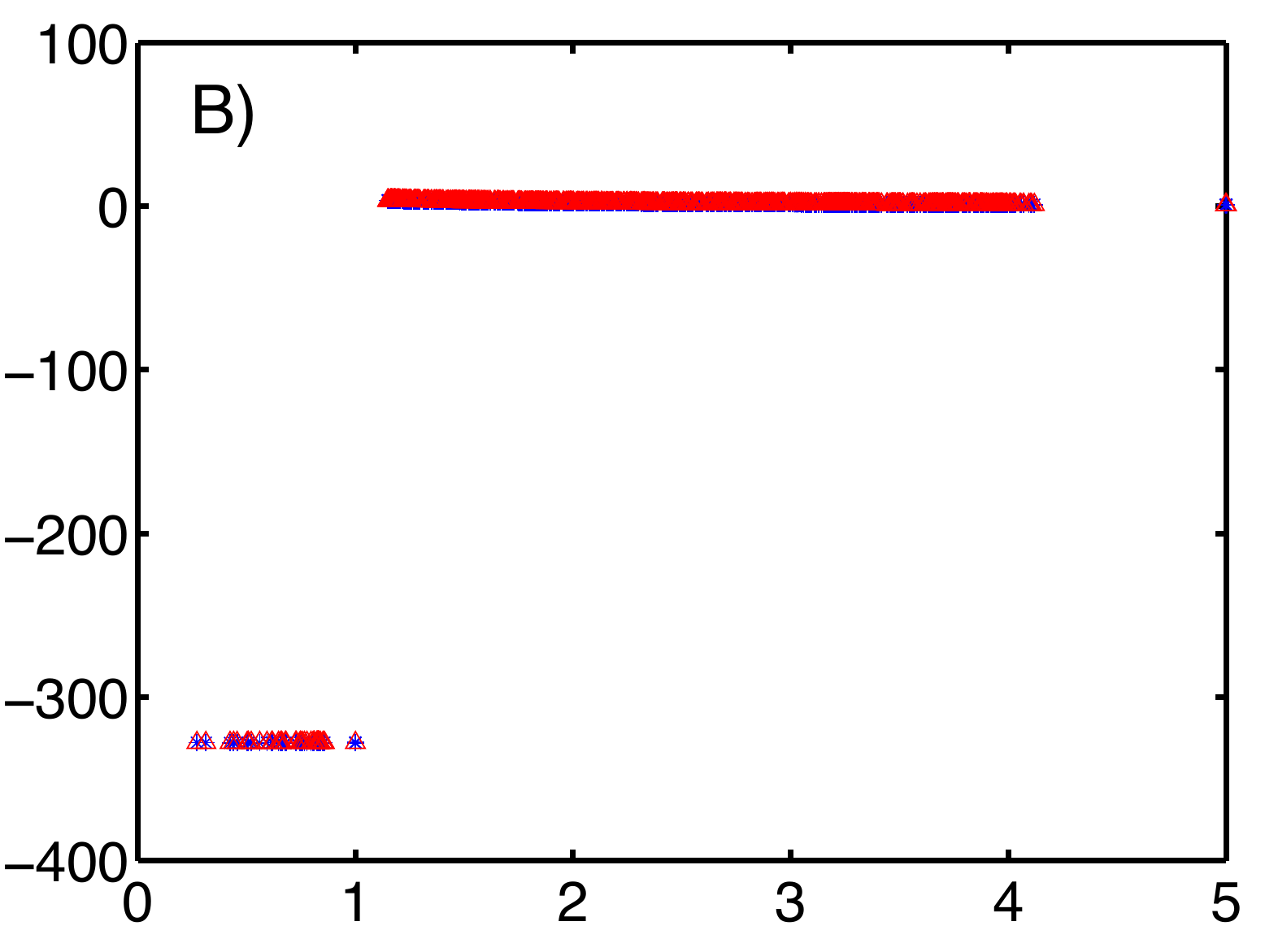} \\
   \includegraphics[width=0.45\textwidth]{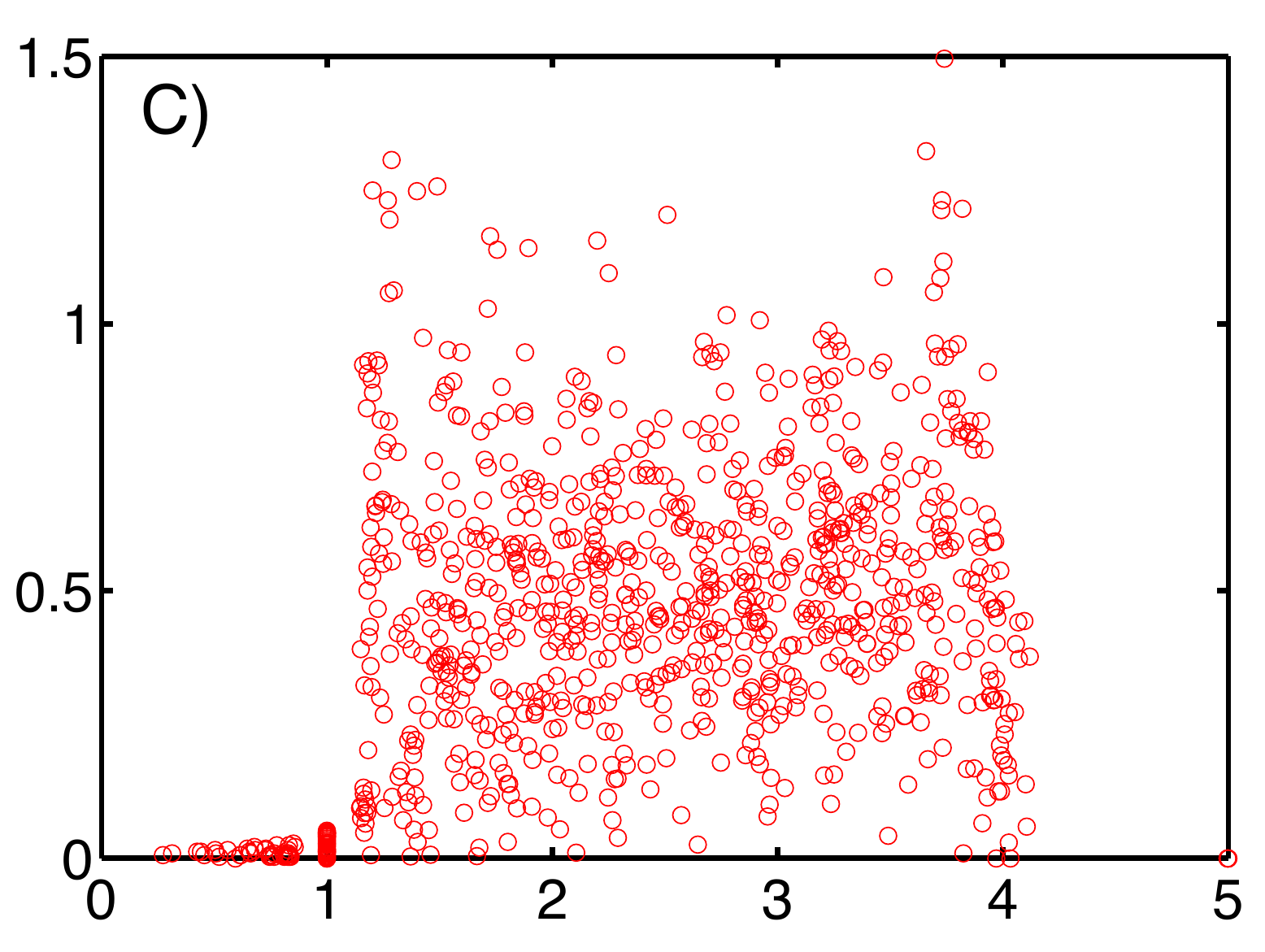}
   \includegraphics[width=0.45\textwidth]{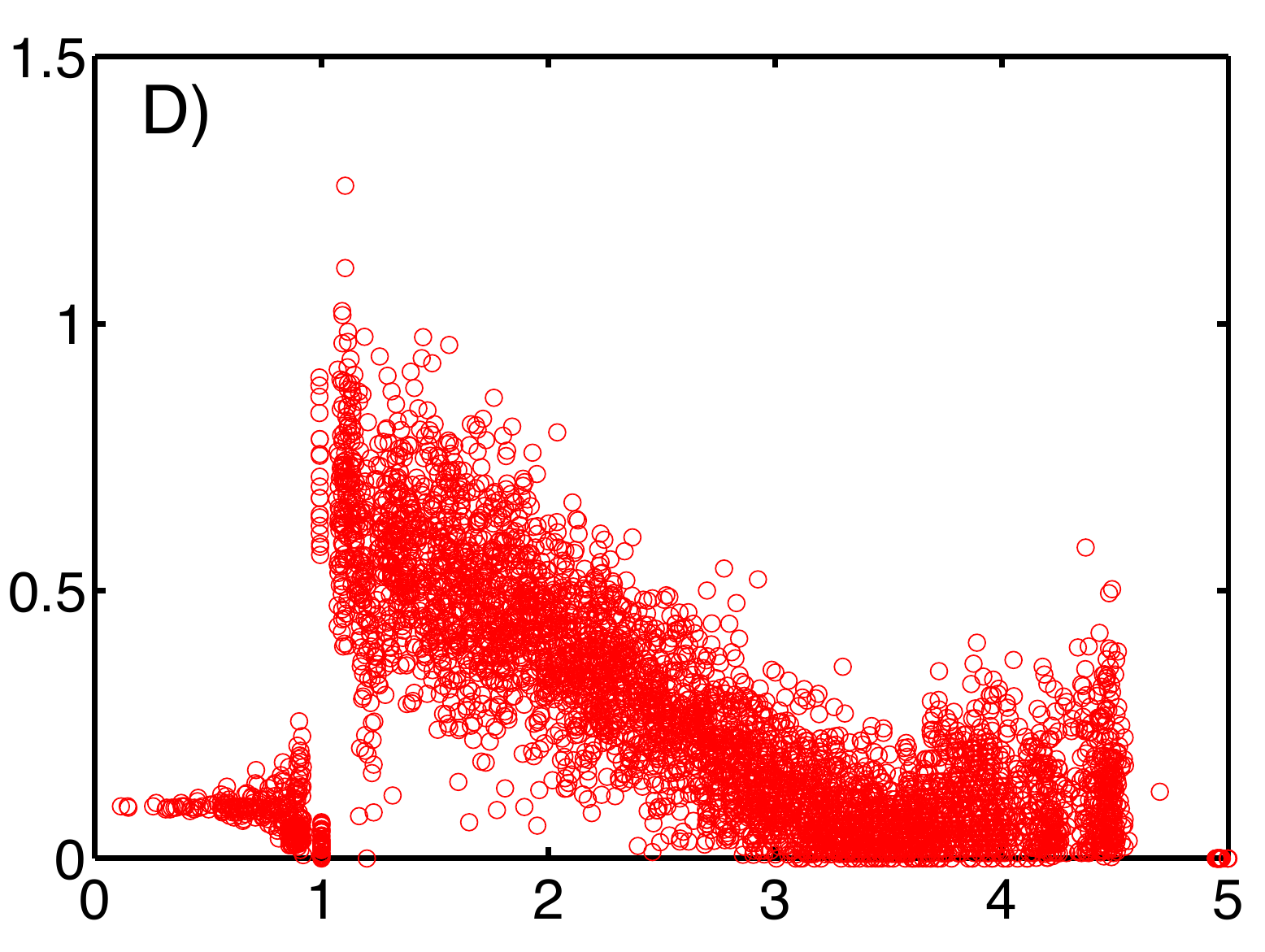} \\
   \includegraphics[width=0.45\textwidth]{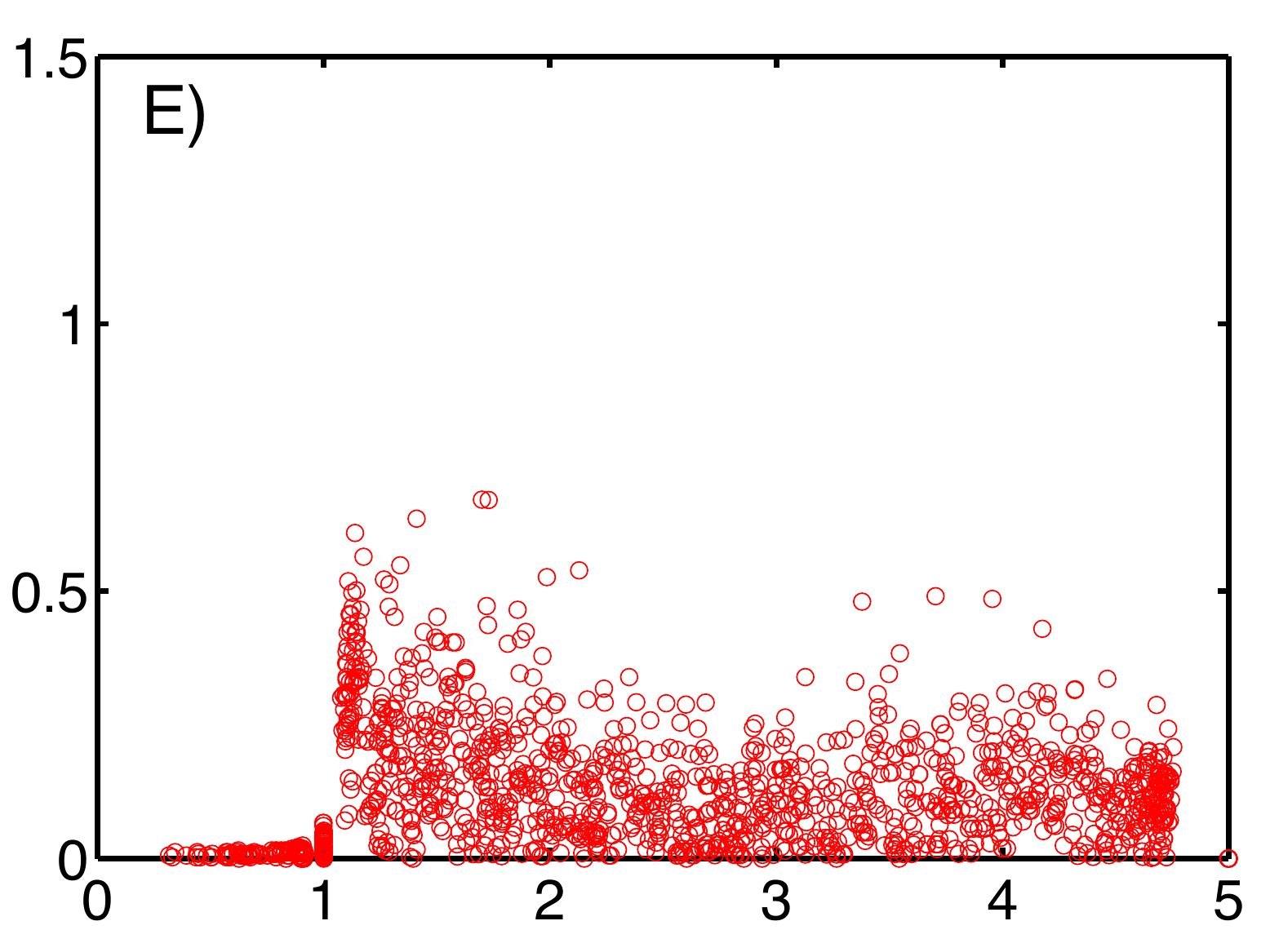} 
\end{center}
\caption{
Solution of the Poisson-Boltzmann equation via the second decomposition scheme. 
(A): Computed regular component $u^r$ of the electrostatic potential (blue) versus the analytical solution (red). 
(B): Computed regular component $u^r$ plus the harmonic component $u^h$ of the electrostatic potential (blue) versus the analytical solution (red).
(C): Relative error in percentage of the computed regular component of the electrostatic potential on an initial mesh.
(D): Relative error in percentage of the computed regular component of the electrostatic potential; globally refined mesh. 
(E): Relative error in percentage of the computed regular component of the electrostatic potential; mesh locally refined on molecular surface and the boundary.
}
\label{fig:born_ion_2}
\end{figure}

Both two decomposition schemes give rise to an elliptic interface problem, whose solution is of $C^0$ only and
can be appreciated from Chart A of Figures~\ref{fig:born_ion_1} and~\ref{fig:born_ion_2}. 

{\bf Example 2.} This second numerical experiment is conducted on an insulin 
protein \cite{WXH04_insulin} (PDB ID: 1RWE). 
This protein has two polypeptide chains, one has 21 amino acid residues and 
the other has 30 residues, and has 1578 atoms in total. 
Because there is no analytical solution available for accuracy assessment 
we solve the Poisson-Boltzmann equation on four progressively refined meshes 
and use the solution on the finest mesh as the reference to measure
the accuracy of other three solutions.
In Table~\ref{table:accuracy} we show the computed electrostatic solvation 
energy ($\Delta G_{ele}$) and the corresponding relative error in the 
solution ($e_{\Delta G_{ele}}$) for each solution.
This energy is defined as
\begin{eqnarray*}
\Delta G_{ele} = \frac{1}{2} \int_{\Omega} (\rho_{sol} - \rho_{vac}) \rho^f dx,
\end{eqnarray*}
where $\rho_{sol}$ is the electrostatic potential of the solvated molecule 
while $\rho_{vac}$ is the potential for the molecule in vacuum, where the 
dielectric constant is assumed to the same as the interior of the molecule.
It turns out that $\rho_{vac}$ is essentially the singular component inside 
molecule, and thus the solvation energy can be directly computed as
\begin{eqnarray*}
\Delta G_{ele} = \frac{1}{2} \int_{\Omega} (\rho^h + \rho^r) \rho^f dx.
\end{eqnarray*}
Assume that the solution on the finest mesh is convergent, we computed the 
relation error in the solution energy for three coarser meshes. 
The diminishing of this relative error confirms the convergence of the 
numerical method for computing electrostatics of realistic biomolecules.
\begin{figure}[!ht]
\begin{center}
   \includegraphics[width=0.45\textwidth]{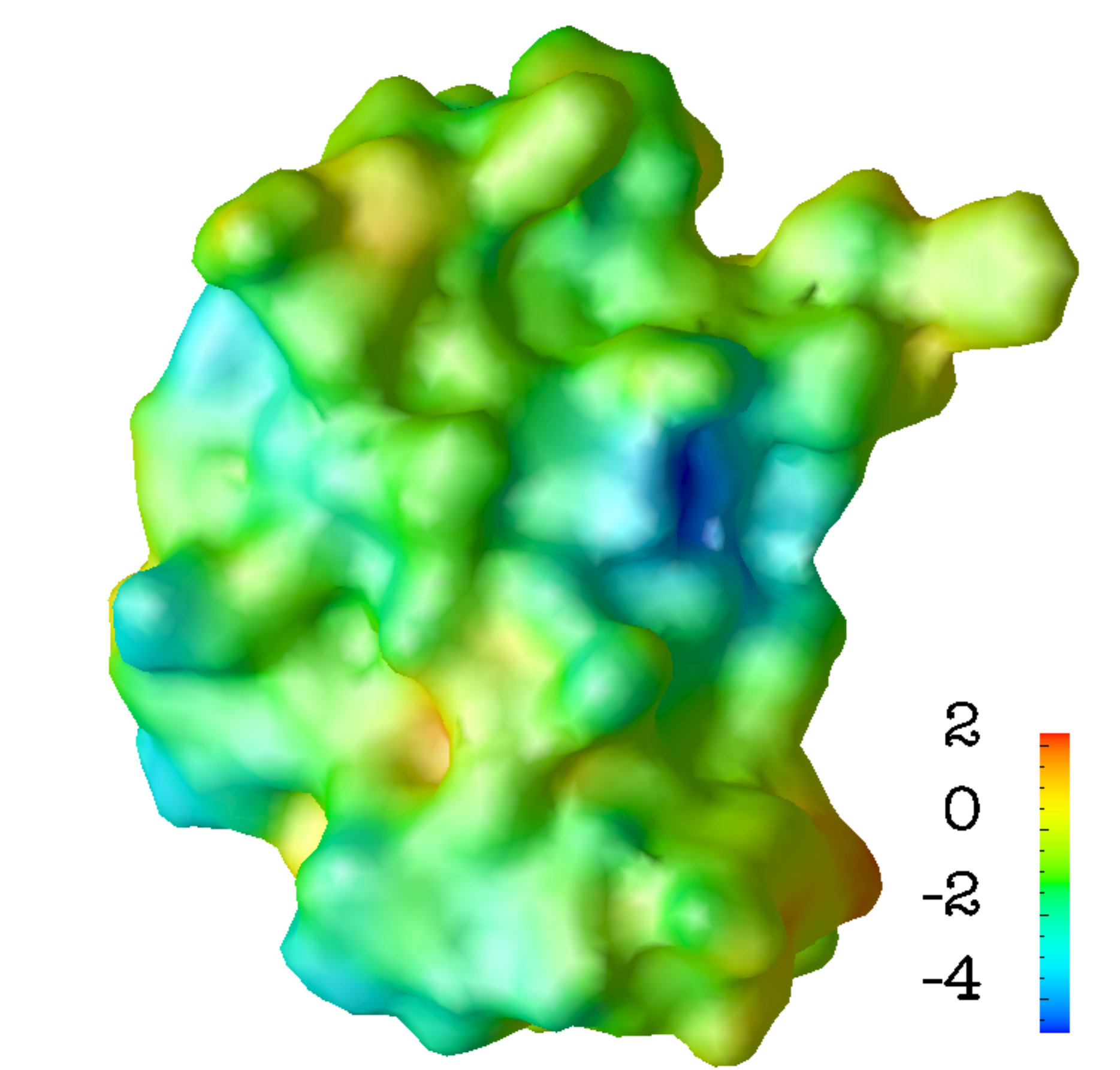}
   \includegraphics[width=0.45\textwidth,angle=90]{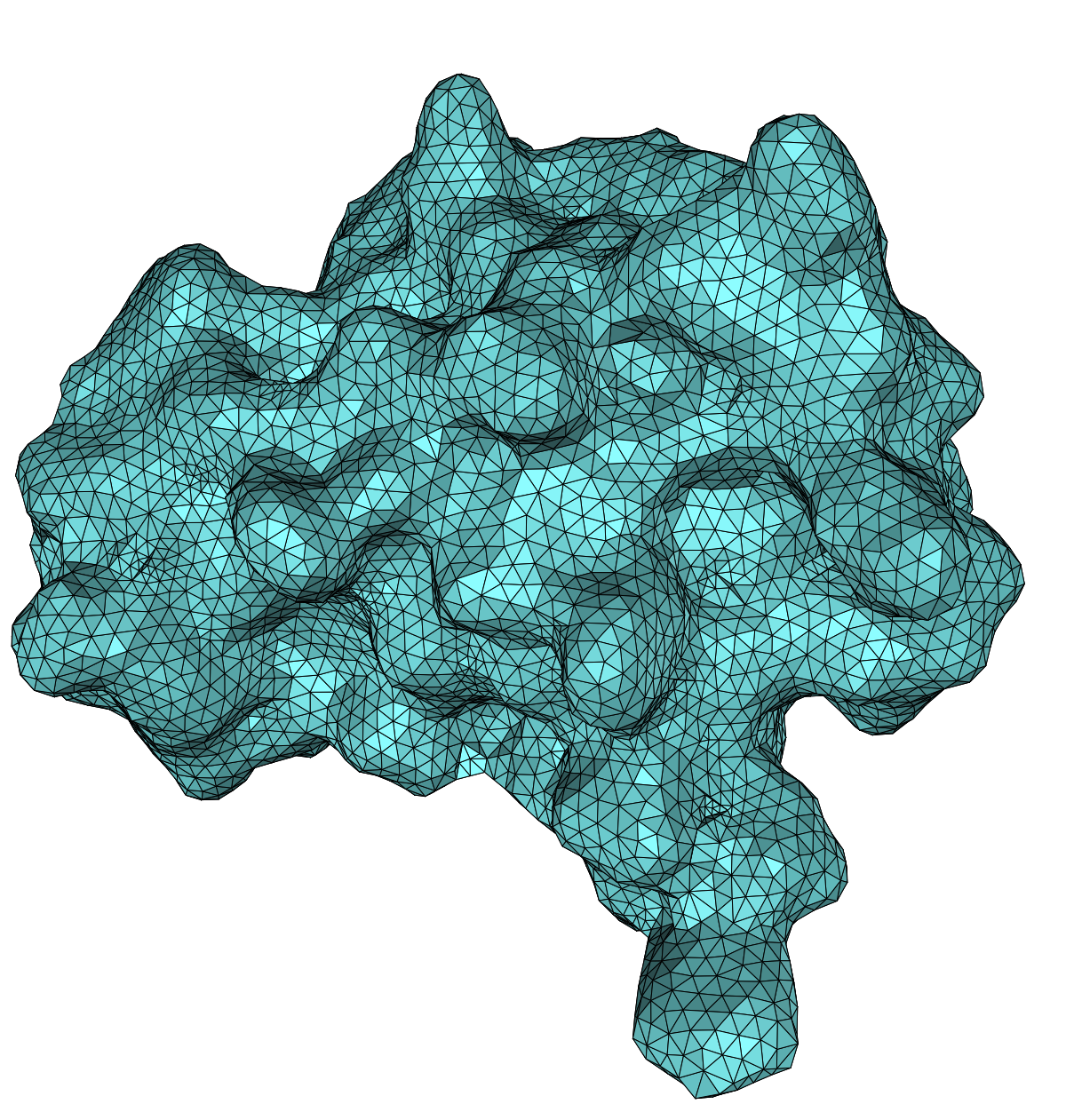} 
\end{center}
\caption{Left: The electrostatic potential mapped on the molecular surface 
         of the insulin protein.
         Right: The surface mesh of insulin protein in the finite element model.}
\label{fig:protein_pot_mesh}
\end{figure}

\begin{table}[!ht]
\begin{center}
\begin{tabular}{cccccc} \hline
 level of mesh &  \# of tetrahedra  & \# of nodes  & \# of nodes on $\Gamma$  & $\Delta G_{ele}$  & $e_{\Delta G_{ele}}$ \\ \hline
   1 & 280928 & 45119 & 7681 & -1476.9 & 0.0757 \\  
   2 & 340410 & 54685 & 10257 & -1403.6 &  0.0224 \\ 
   3 & 474011 & 76036 & 14982 & -1380.3 & 0.0054\\
   4 & 626221 & 100393 & 19816 & -1372.9 & - \\ \hline
\end{tabular}
\end{center}
\caption{
The electrostatic solvation energy $\Delta G_{ele}$ and
the corresponding relative error $e_{\Delta G_{ele}}$ for 
progressively refined meshes, confirming convergence of the
discretization technique based on the new regularization.
}\label{table:accuracy}
\end{table}

\section{Summary}
\label{sec:summary}

In this article, we considered the design of an effective and reliable 
adaptive finite element
method (AFEM) for the nonlinear Poisson-Boltzmann equation (PBE).
In Section~\ref{sec:pbe}, we began with a very brief derivation of
the standard form of the Poisson-Boltzmann equation.
We examined the two-scale regularization technique 
described in~\cite{CHX06b}, and briefly reviewed the
solution theory ({\em a priori} estimates and other basic results)
developed in~\cite{CHX06b} based on this regularization.
We then described a second distinct regularization and explained why it is
superior to the original approach as a framework for developing numerical 
methods.
We then quickly assembled the cast of basic mathematical results needed
for the second regularization.
In Section~\ref{sec:afem}, we described in detail an adaptive finite
element method based on residual-type {\em a posteriori} estimates, and
summarized some basic results we needed later for the development of a
corresponding convergence theory.
We presented this new convergence analysis in Section~\ref{sec:conv}, giving
the first AFEM contraction-type result for a class of semilinear problems
that includes the Poisson-Boltzmann equation.

We gave a detailed discussion of our mesh generation toolchain
in Section~\ref{sec:mesh}, including
algorithms designed specifically for Poisson-Boltzmann applications.
These algorithms produce a high-quality, high-resolution geometric model 
(surface and volume meshes) satisfying the assumptions needed for our 
AFEM algorithm.
These algorithms are {\em feature-preserving and adaptive},
designed specifically for constructing meshes of biomolecular 
structures, based on the intrinsic local structure tensor of the 
molecular surface.
Numerical experiments were given in Section~\ref{sec:numerical};
all of the AFEM and meshing algorithms described in the article were 
implemented in the Finite Element Toolkit (FETK), developed and 
maintained at UCSD.
The stability advantages of the new regularization scheme were demonstrated 
with FETK through comparisons with the original regularization approach
for a model problem.
Convergence and accuracy of the AFEM algorithm was also illustrated 
numerically by approximating the solvation energy for a protein,
in agreement with theoretical results established earlier in the paper.

In this article, we have examined an alternative regularization
which must be used in place of the original regularization
proposed in~\cite{CHX06b}, due to an
inherent instability built into the original regularization.
We showed that an analogous solution and approximation theory 
framework can be put into in place for the new regularization, providing 
a firm foundation for the development of a large class of numerical methods 
for the Poisson-Boltzmann equation, including methods based on finite 
difference, finite volume, spectral, wavelet and finite element methods.
Each of these methods can be shown to be convergent for the regularized
problem, since it was shown in this article to allow for a standard
$H^1$ weak formulation with standard solution and test spaces.
Our primary focus in this article then became the development of an 
AFEM scheme for the new regularized problem, based on
residual-type {\em a posteriori} error indicators, a fairly standard
and easy to implement marking strategy (D\"orfler marking), 
and well-understood simplex bisection algorithms.
We showed that the resulting AFEM scheme is reliable, by proving
a contraction result for the error, which established convergence
of the AFEM algorithm to the solution of the continuous problem.
The AFEM contraction result, which is one of the first results of this 
type for nonlinear elliptic problems, follows from the global upper boundedness 
of the estimator, its reduction, and from a quasi-orthogonality result
that relies on the {\em a priori} $L^{\infty}$ estimates we derived.
This new AFEM convergence framework is distinct from the analysis of 
nonlinear PBE with the previous regularization approach from 
\cite{CHX06b}, is more general, and can be applied 
to other semi-linear elliptic equations~\cite{HTZ08a}.
The contraction result creates the possibility of establishing
optimality of the AFEM algorithm in both computational and storage complexity.

We note that for computational chemists and physicists who rely on numerical 
solution of the Poisson-Boltzmann equation, discretizations based on the 
stable splitting as described in the current paper are the only reliable 
numerical techniques under mesh refinement for the Poisson-Boltzmann equation
that we are aware of (both provably convergent and stable to roundoff error).
While one must take care with evaluation of the singular function $u^s$, 
since this generally involves pairwise interactions between charges and mesh
points, the alternative to using these types of splitting discretizations
is to lose reliability in the quality of the numerical solution.
While we focused on (adaptive) finite element methods in this article, 
we emphasize that the splitting framework can be easily incorporated 
into one's favored (finite difference, finite volume, spectral, wavelet, 
or finite element) numerical method that is currently being employed
for the PBE.

\section*{Acknowledgments}


We would like to thank the reviewer for comments that 
improved the content and readability of the paper.
The first author was supported in part by 
NSF Awards~0715146, 0821816, 0915220, and 0822283 (CTBP),
NIH Award P41RR08605-16 (NBCR), DOD/DTRA Award HDTRA-09-1-0036,
CTBP, NBCR, NSF, and NIH.
The second author was supported in part by NIH, NSF, HHMI, CTBP, and NBCR.
The third, fourth, and fifth authors
were supported in part by NSF Award~0715146, CTBP, NBCR, and HHMI.

\bibliographystyle{abbrv}
\bibliography{../bib/coupling,../bib/pnp,../bib/meshgen,../bib/mjh,../bib/library,../bib/books}



\end{document}